\definecolor{blue}{RGB}{197,77,87}
\definecolor{green}{RGB}{200,244,99}
\definecolor{red}{RGB}{78,205,196}
\definecolor{grey}{RGB}{197,77,87}
\newcommand{\blue}[1]{\color{blue}#1}
\newcommand{\green}[1]{\color{green}#1}
\newcommand{\red}[1]{\color{red}#1}
\newcommand{\grey}[1]{\color{grey}#1}
 \newtheorem{thm}{Theorem}[section]
   \newtheorem{lem}[thm]{Lemma}
    \newtheorem{cor}[thm]{Corollary}
   \newtheorem{pro}[thm]{Proposition}
 \newtheorem{alg}[thm]{Algorithm}
   \theoremstyle{definition}
   \newtheorem{defn}[thm]{Definition}
   \newtheorem{ex}[thm]{Example}
    \newtheorem{rem}[thm]{Remark}
\newcommand{\CC}{\mathbb C}
\newcommand{\RR}{\mathbb R}
\newcommand{\QQ}{\mathbb Q}
\newcommand{\ZZ}{\mathbb Z}
\newcommand{\TT}{\mathbb{T}}
\newcommand{\LL}{\mathbb L}
\newcommand{\cT}{\mathcal T}
\newcommand{\cF}{\mathcal F}
\newcommand{\cA}{\mathcal A}
\newcommand{\cB}{\mathcal B}
\newcommand{\cR}{\mathcal R}
\newcommand{\cU}{\mathcal U}
\newcommand{\cM}{\mathcal M}
\newcommand{\cN}{\mathcal N}
\newcommand{\kk}{\mathbf k}
\newcommand{\cprod}{\prod}
\newcommand{\stint}{\cap_{\text{st}}}
\DeclareMathOperator{\supp}{supp}
\DeclareMathOperator{\conv}{conv}
\DeclareMathOperator{\vol}{vol}
\DeclareMathOperator{\val}{val}
\DeclareMathOperator{\init}{in}
\DeclareMathOperator{\Cay}{Cay}
\DeclareMathOperator{\row}{rowspace}
\DeclareMathOperator{\link}{link}
\DeclareMathOperator{\mult}{mult}
\DeclareMathOperator{\rank}{rank}
\DeclareMathOperator{\spann}{span}
\DeclareMathOperator{\codim}{codim}
\begin{document}

 \title{Computing Tropical Resultants}
 \author{Anders Jensen}
 \author{Josephine Yu}
 \address{Institut for Matematik, Aarhus Universitet, Aarhus, Denmark}
\email{jensen@imf.au.dk}
\address{School of Mathematics, Georgia Institute of Technology,
        Atlanta GA, USA}
\email {jyu@math.gatech.edu}

\thanks {\emph {2010 Mathematics Subject Classification:} 14T05, 13P15, 14M25, 52B20, 52B55  \\ \emph{Keywords: tropical geometry, resultant, Newton polytope, computational geometry}
}

\date{\today}

%-- non-journal version
 \maketitle
 
 \begin{abstract}
We fix the supports $\cA=(A_1,\dots,A_k)$ of a list of tropical polynomials and define the tropical resultant $\cT\cR(\cA)$ to be the set of choices of coefficients such that the tropical polynomials have a common solution. We prove that $\cT\cR(\cA)$ is the tropicalization of the algebraic variety of solvable systems and that its dimension can be computed in polynomial time. The tropical resultant inherits a fan structure from the secondary fan of the Cayley configuration of~$\cA$, and we present algorithms for the traversal of $\cT\cR(\cA)$ in this structure. We also present a new algorithm for recovering a Newton polytope from the support of its tropical hypersurface. We use this to compute the Newton polytope of the sparse resultant polynomial in the case when $\cT\cR(\cA)$ is of codimension 1. Finally we consider the more general setting of specialized tropical resultants and report on experiments with our implementations. 
 \end{abstract}

% -- journal version
%\keywords{tropical geometry, resultant, polytope, computation}
%\bodymatter

%\tableofcontents

\section{Introduction}

We study generalizations of the problem of computing the Newton polytope of the sparse resultant combinatorially, without first computing the resultant polynomial.  The input is a tuple $\cA = (A_1, A_2, \dots, A_k)$ of integer point configurations in $\ZZ^n$.  The {\em sparse resultant} $\cR(\cA)$ of $\cA$, or the \emph{variety of solvable systems}, is the closure in $(\CC^*)^{A_1}\times(\CC^*)^{A_2}\times\cdots\times(\CC^*)^{A_k}$ of the collection of tuples of polynomials $(f_1, f_2, \dots, f_k)$ such that $f_1=f_2=\cdots=f_k=0$ has a solution in $(\CC^*)^n$ and each $f_i$ has support $A_i$.  This variety is irreducible and defined over $\QQ$ \cite{Sturmfels94}. 
If $\cR(\cA)$ is a hypersurface, then it is defined by a polynomial, unique up to scalar multiple, called the (sparse) \emph{resultant polynomial} of $\cA$. Its Newton polytope is called the {\em resultant polytope} of $\cA$.

In the hypersurface case, Sturmfels gave a combinatorial description of the resultant polytope \cite{Sturmfels94}, giving rise to a combinatorial algorithm for computing its vertices from the vertices of the secondary polytope of the Cayley configuration $\Cay(\cA)$.  A drawback of this construction is that the secondary polytope typically has far more vertices than the resultant polytope.  There have been attempts to compute the resultant polytopes without enumerating all vertices of the secondary polytope \cite{EFK}.  A main contribution of our paper is an algorithm (Section~\ref{sec:algorithms}) for traversing the tropicalization of $\cR(\cA)$ as a subfan of the secondary fan of $\Cay(\cA)$.  This approach allows us to compute tropicalizations of resultant varieties of arbitrary codimension. 

The {\em tropical resultant} $\cT\cR(\cA)$ consists of tuples of tropical polynomials having a common solution.  We show in Theorem~\ref{thm:tropicalequalstropicalized} that $\cT\cR(\cA)$ coincides with the tropicalization of $\cR(\cA)$.  The tropical resultant is combinatorial in nature, and we present in Theorem~\ref{thm:orthants} a simple description of it as a union of polyhedral cones, each of which is the sum of a positive orthant and a linear space. 

In \cite{DFS}, the tropical discriminant is described as a sum of a tropical linear space and an ordinary linear space.  This description carries over to the tropical resultant when $\cA$ is {\em essential}, and in particular $\cR(\cA)$ is a hypersurface.  Our description in Theorem~\ref{thm:orthants} is different and also works for non-essential cases and non-hypersurface cases.  Moreover, it is simpler, and we do not need to compute a nontrivial tropical linear space.

The tropicalization of a variety is a polyhedral fan of the same dimension as the original variety.  We derive a new formula for the codimension of the (tropical) resultant in Theorem~\ref{thm:codimension} and show that it can be computed in polynomial time using the cardinality matroid intersection algorithm.

Specialized resultants are obtained by fixing some coefficients of $f_i$'s and considering the collection of other coefficients giving a polynomial system solvable in the algebraic torus.  In other words, the specialized resultants are intersections of sparse resultants and subspaces parallel to coordinate subspaces.  When the specialized coefficient values are generic, the tropicalization $\cT\cR_S(\cA)$ of the specialized resultant is the stable intersection of the tropical resultant $\cT\cR(\cA)$ with a coordinate subspace. This is a subfan of the restriction of the secondary fan of $\Cay(\cA)$ to the subspace and can be computed by a fan traversal. The algorithms are significantly more complex and are described in Section~\ref{sec:special}.  Moreover, using the results from our concurrent work on tropical stable intersections \cite{stableIntersection}, we describe the specialized tropical resultant as a union of cones, each of which is the intersection of a coordinate subspace and the sum of a positive orthant and a linear space.

Computation of resultants and specialized resultants, of which the implicitization problem is a special case, is a classical problem in commutative algebra that remains an active area.  In the concurrent work \cite{EFKP} an algorithm for computing Newton polytopes of specialized resultant polynomials using Sturmfels' formula and the beneath-beyond method is presented and implemented, and the work is therefore highly relevant for our project.  
While the main focus of \cite{EFKP} is the efficiency of the computation of the Newton polytopes of specialized resultant polynomials, our main interest has been the geometric structure of secondary fans which allows traversal of tropical resultants of arbitrary codimension.

The {\em tropical description} of a polytope $P$ is a collection of cones whose union is the support of the codimension one skeleton of the normal fan of $P$, with multiplicities carrying lengths of the edges of $P$.  That is, the union is the {\em tropical hypersurface} defined by $P$. For example, the tropical hypersurface of a zonotope is the union of the dual hyperplanes (zones), and the tropical hypersurface of the secondary polytope of a point configuration contains codimension one cones spanned by vectors in the Gale dual.  See Section~\ref{sec:tropSecondary}.  
The tropical description uniquely identifies the polytope up to translation, and we consider it to be an equally important representation of a polytope as the V- and H-descriptions. Furthermore, the conversion algorithms between these representations deserve the same attention as other fundamental problems in convex geometry. 
A contribution of this paper is an algorithm (Algorithm~\ref{alg:region}) for reconstructing normal fans of polytopes from their tropical descriptions. We apply the algorithm to the tropical description of resultant polytopes in Theorem~\ref{thm:orthants} to recover the combinatorics of the resultant polytope.  From the normal fan, we can efficiently obtain the V-description of the polytope.

All the algorithms described in this paper have been implemented in the software Gfan \cite{gfan}. Computational experiments and examples are presented in Section~\ref{sec:comparison}.  A list of open problems is presented in Section~\ref{sec:openProblems}.

\section{Resultants}

Let $\cA = (A_1, A_2, \dots, A_k)$ where each $A_i = \{ a_{i,1}, a_{i,2}, \dots, a_{i,m_i} \}$ is a 
multi-subset of $\ZZ^n$, and let $m = m_1 + m_2 + \cdots + m_k$.  Throughout this paper, we assume that $m_i \geq 2$ for all $i$.  However, the points in $A_i$ need not be distinct.  This is important for some applications such as implicitization.  Let $Q_1, Q_2, \dots, Q_k$ be the convex hulls of $A_1, A_2, \dots, A_k$ respectively.  Let $(\CC^*)^{A_i}$ denote the set of polynomials of the form
$\sum_{j = 1}^{m_i} c_j x^{a_{ij}}$ in $\CC[x_1, x_2, \dots, x_n]$, where each $c_j$ is in $\CC^*  := \CC \backslash \{0\}$.
Let $Z \subseteq \cprod_{i = 1}^k (\CC^*)^{A_i}$ be the set consisting of tuples $(f_1, f_2, \dots, f_k)$ such that the system of equations $f_1 = f_2 = \cdots = f_k = 0$ has a solution in $(\CC^*)^n$.

\begin{defn}
The {\em resultant variety}, or the {\em variety of solvable systems}, is the closure $\overline{Z}$ of $Z$ in $\cprod_{i=1}^k (\CC^*)^{A_i}$ and is denoted $\cR(\cA)$.
\end{defn}

The resultant variety is usually defined as a subvariety of $\cprod_{i=1}^k \CC^{A_i}$ or its projectivization \cite{GKZ, Sturmfels94}, but we chose to work in $\cprod_{i=1}^k (\CC^*)^{A_i}$ as tropicalizations are most naturally defined for subvarieties of tori.

\subsection{A simple description of the tropical resultant and its multiplicities}
\label{sec:simple}

The tropical semiring $\TT = (\RR,\oplus, \odot)$ is the set of real numbers with minimum as tropical addition $\oplus$ and usual addition as tropical multiplication $\odot$.  A tropical (Laurent) polynomial $F$ in $n$ variables $x = (x_1, x_2, \dots, x_n)$ is a multiset of terms $(c, a)$ or $c \odot x^a$ where $c\in \RR$ is the coefficient and $a = (a_1, a_2, \dots, a_n) \in \ZZ^n$ is the exponent.  We will also write $F = \bigoplus_{(c,a) \in F} (c\odot x^a)$.  The \emph{support} of $F$ is the multiset of $a$'s, and the \emph{Newton polytope} of $F$ is the convex hull of its support.

The {\em tropical solution set} $\cT(F)$ of a tropical polynomial $F$ is the locus of points $x \in \RR^n$ such the minimum is attained at least twice in the expression
$$\bigoplus_{(c,a) \in F} (c\odot x^a) = \min_{(c, a) \in F}(c+a_1 x_1+ a_2 x_2+ \cdots + a_n x_n).$$  
In other words, a point $x \in \RR^n$ is in $\cT(F)$ if and only if the minimum for $(1,x)\cdot (c,a)$ is attained for two terms in $F$, which may be repeated elements.
Therefore, $\cT(F)$ is a (not necessarily pure dimensional) subcomplex of a polyhedral complex dual to the marked regular subdivision of the support of $F$ induced by the coefficients $c$, consisting of duals of cells with at least two marked points.  See Section~\ref{sec:secondary} for definitions of subdivisions and marked points.

When $F$ contains no repeated elements, the tropical solution set coincides with the non-smooth locus of the piecewise-linear function from $\RR^n$ to $\RR$ given by $x \mapsto F(x) = \bigoplus_{(c,a) \in F} (c\odot x^a)$, which is also called a {\em tropical hypersurface}.  In particular, if all coefficients of $F$ are the same and if $F$ contains no repeated elements, then the tropical hypersurface is the codimension one skeleton of the inner normal fan of the Newton polytope of $F$.

 Let $\cA = (A_1, A_2, \dots, A_k)$ be as before, and let $\RR^{A_i}$ denote the set of tropical polynomials of the form $\bigoplus_{j = 1}^{m_i} \left( c_{ij} \odot x^{\odot a_{ij}} \right)$. 

\begin{defn}
The {\em tropical resultant} $\cT\cR(\cA)$ of $\cA$ is the subset of $\RR^m$, or $ \RR^{A_1} \times \RR^{A_2} \times \cdots \times \RR^{A_k}$, consisting of tuples $(F_1, F_2, \dots, F_k)$ such that the tropical solution sets of $F_1, F_2, \dots, F_k$
 have a nonempty common intersection in $\RR^n$. 
\end{defn}

We can also consider the tropical resultant as a subset of $\cprod_{i=1}^k \RR^{A_i} / (1,1,\dots,1)\RR$, but we prefer to work with $\RR^m$ in this paper.  

For two univariate tropical polynomials, the term ``tropical resultant'' had been used by other authors to describe a tropical polynomial analogous to ordinary resultants.  In \cite{Odagiri} it is defined as the tropical determinant of the tropical Sylvester matrix.  In \cite{Tabera} it is defined as the tropicalization of the ordinary resultant polynomial.  In this paper the term ``tropical resultant'' always refers to a fan and never a tropical polynomial.  

\begin{defn}
Let $\kk$ be a field and $I\subseteq\kk[x_1,\dots,x_n]$ an ideal. The {\em tropical variety} $\cT(I)$ of $I$, or the \emph{tropicalization} of $V(I)$, is a polyhedral fan with support
$$
\cT(I) := \{\omega \in \RR^n: \text{ the initial ideal }\init_\omega(I) \text{ contains no monomials} \}.
$$
For $\omega$ in the relative interior of a cone $C_\omega \in \cT(I)$ we define its multiplicity as
$$
\mult_\omega(\cT(I)) := \dim_\kk(\kk[\ZZ^n \cap C_\omega^\perp] / \langle\init_\omega(I)\rangle)
$$
when the right hand side is finite, in particular when $C_\omega$ is a Gr\"obner cone of the same dimension as $\cT(I)$.
\end{defn}

In this definition we refer to the ``constant coefficient'' initial ideal as in \cite{BJSST}, where we disregard any valuation of the ground field even if it is non-trivial, except that we are picking out the terms with smallest $\omega$-degree. If the ideal $I$ is homogeneous, $\cT(I)$ gets a fan structure from the Gr\"obner fan  of $I$.
When $C_\omega$ is the smallest Gr\"obner cone in $\cT(I)$ containing $\omega$, the initial ideal $\init_\omega(I)$ is homogeneous with respect to any weight in the linear span of $C_\omega$.  Hence after multiplying each homogeneous element of $\init_\omega(I)$ by a Laurent monomial they generate an ideal $\langle\init_\omega(I)\rangle$ in the ring $\kk[\ZZ^n \cap C_\omega^\perp]$ of Laurent polynomials in $x_1, x_2, \dots, x_n$ which are of degree zero with respect to the weight vector $\omega$.

%Hence after a monomial change of coordinates, $\init_\omega(I)$ can be considered as an ideal in $\CC[\ZZ^n \cap C_\omega^\perp]$.

The following is the first main result toward a combinatorial description of the tropicalization of $\cR(\cA)$.  

\begin{thm}
\label{thm:tropicalequalstropicalized}
The support of the tropicalization of the resultant variety $\cR(\cA)$ coincides with the tropical resultant $\cT\cR(\cA)$.
\end{thm}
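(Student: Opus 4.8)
The plan is to prove the two inclusions $\cT\cR(\cA) \subseteq \cT(\cR(\cA))$ and $\cT(\cR(\cA)) \subseteq \cT\cR(\cA)$ separately, using the Fundamental Theorem of Tropical Geometry to translate between the combinatorial condition defining $\cT\cR(\cA)$ and the initial-ideal condition defining $\cT(\cR(\cA))$. The key observation is that both sides are governed by the same incidence condition: a tuple of polynomials $(f_1,\dots,f_k)$ with supports $\cA$ lies in $Z$ precisely when there exists a point $y \in (\CC^*)^n$ with $f_i(y)=0$ for all $i$, and tropically the analogous statement should hold over the Puiseux series field $\KK=\CC\{\{t\}\}$ with valuation.

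For the inclusion $\cT\cR(\cA)\subseteq\cT(\cR(\cA))$, I would start with a tuple $(F_1,\dots,F_k)$ of tropical polynomials whose tropical solution sets share a common point $w\in\RR^n$. The goal is to lift this to a $\KK$-point of $\cR(\cA)$ whose coefficient valuations are exactly the $F_i$. Concretely, write $F_i = \bigoplus_j (c_{ij}\odot x^{a_{ij}})$; I want to choose $f_i = \sum_j \tilde c_{ij} x^{a_{ij}}$ with $\val(\tilde c_{ij})=c_{ij}$ and a point $y\in(\KK^*)^n$ with $\val(y)=w$ such that $f_i(y)=0$ for all $i$. Since $w\in\cT(F_i)$, the minimum in $F_i$ at $w$ is achieved at least twice, so the initial form $\init_w(f_i)$ (with generic leading coefficients) is a nonzero polynomial that is not a monomial; by a standard argument one can choose the $\tilde c_{ij}$ generically subject to the valuation constraints so that the system $\init_w(f_i)=0$ has a common solution $\bar y\in(\CC^*)^n$, and then use Hensel-type/Newton-polygon lifting to extend $\bar y$ to $y\in(\KK^*)^n$ with $\val(y)=w$ solving the full system. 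This exhibits $(f_1,\dots,f_k)\in Z\subseteq\cR(\cA)$, and since $\cR(\cA)$ is defined over $\QQ$ the valuation vector $(F_1,\dots,F_k)$ lies in $\cT(\cR(\cA))$ by the Fundamental Theorem. The subtlety here — the main obstacle — is ensuring the initial systems are \emph{simultaneously} solvable in the torus: achieving two-term minima in each $F_i$ individually does not a priori mean the $k$ initial polynomials have a common root. I expect this to require a careful genericity/dimension argument, possibly handling the case where some $A_i$ has repeated points (where $\cT(F_i)$ can be a proper subcomplex) separately, and perhaps using that $\cR(\cA)$ is the closure of $Z$ so that it suffices to solve after a small perturbation.

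For the reverse inclusion $\cT(\cR(\cA))\subseteq\cT\cR(\cA)$, I would take $\omega\in\cT(\cR(\cA))$ in the relative interior of a Gr\"obner cone; by the Fundamental Theorem there is a point $(f_1,\dots,f_k)\in\cR(\cA)(\KK)$ with $\val$ of its coefficient vector equal to $\omega$, i.e. $\val(\tilde c_{ij}) = $ the coefficient of $x^{a_{ij}}$ in $F_i$. Since $\cR(\cA)$ is the closure of $Z$, after perturbing within the torus (which does not change the valuations if the perturbation is small in the $t$-adic sense) I may assume $(f_1,\dots,f_k)\in Z$, so there is $y\in(\KK^*)^n$ with $f_i(y)=0$. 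Setting $w=\val(y)$, the vanishing $f_i(y)=0$ forces, by the ultrametric inequality, that the minimum of $\val(\tilde c_{ij}) + \langle a_{ij}, w\rangle$ over $j$ is attained at least twice — otherwise the term of strictly smallest valuation would dominate and $f_i(y)\ne0$. Hence $w\in\cT(F_i)$ for each $i$, so $w$ is a common tropical solution and $(F_1,\dots,F_k)\in\cT\cR(\cA)$. This direction is essentially formal modulo the perturbation remark, which itself follows because $Z$ is open and dense in $\cR(\cA)$ and the valuation map is continuous in the appropriate sense. The only care needed is the repeated-points case, where ``attained at least twice'' must be interpreted as counting repeated support elements — but that is exactly how $\cT(F)$ was defined in the excerpt, so the argument goes through verbatim. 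I would then remark that both sides are closed, so it suffices to verify the inclusions on a dense subset such as rational points, which is where the Fundamental Theorem applies cleanly.
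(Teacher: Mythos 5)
Your overall plan (two inclusions via the Fundamental Theorem) is reasonable, but the step you yourself flag as the main obstacle in the inclusion $\cT\cR(\cA)\subseteq\cT(\cR(\cA))$ is not resolved, and the proposed resolution does not work as stated. Choosing the coefficients $\tilde c_{ij}$ \emph{generically} subject to the valuation constraints will in general make the initial system $\init_w(f_1)=\cdots=\init_w(f_k)=0$ unsolvable in $(\CC^*)^n$: the system is typically overdetermined (e.g.\ $k>n$ binomial initial forms with generic leading coefficients have no common torus root), so what is needed is a \emph{special} choice, not a generic one. Moreover, even granted a common root $\bar y$ of the chosen initial forms, the asserted ``Hensel-type/Newton-polygon lifting'' to a point $y\in(\KK^*)^n$ with $\val(y)=w$ solving the full system is not a routine step for a fixed multivariate system: the initial ideal of $\langle f_1,\dots,f_k\rangle$ can be strictly larger than the ideal generated by the $\init_w(f_i)$, so a common root of the given initial forms need not lift. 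The paper sidesteps both problems by reversing the order of lifting inside an incidence variety: it first fixes the lift of the tropical solution to the monomial point $x_0=(t^{X_1},\dots,t^{X_n})$, after which each equation $f_i(x_0)=0$ is a single (linear) hypersurface condition on the coefficients of $f_i$ alone; since the $k$ coefficient blocks are disjoint, Kapranov's theorem lifts each $F_i$ independently, proving that $f_1,\dots,f_k$ form a tropical basis for the incidence variety, and the Sturmfels--Tevelev projection theorem then transfers this to the resultant. If you want to salvage your argument, fix the root first (your $\bar y$, or directly $t^w$) and then solve for the coefficients; as written, the genericity/dimension argument you defer is exactly the missing content.

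Your reverse inclusion also has a soft spot: you pass from a $\KK$-point of $\cR(\cA)=\overline Z$ to a nearby point of $Z$ ``because $Z$ is open and dense and the valuation map is continuous.'' Zariski density of $Z$ does not by itself give $t$-adic density of the solvable locus near your given $\KK$-point, so this perturbation needs a genuine argument (non-archimedean density of Zariski-open subsets of an irreducible variety, or again the Sturmfels--Tevelev theorem, which the paper invokes and which handles both inclusions at once via $\cT(\overline{\pi(W)})=\pi(\cT(W))$). The ultrametric argument that a common zero forces the minimum to be attained twice is fine, and your handling of repeated support points is consistent with the paper's definitions; the gaps are the two lifting/perturbation steps above.
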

A consequence is that we may identify $\cT\cR(\cA)$ with the tropicalization of $\cR(\cA)$ and we define its multiplicities accordingly.

We will use incidence varieties to give a proof of Theorem~\ref{thm:tropicalequalstropicalized}.  Let the {\em incidence variety} be
\begin{equation}
\label{eqn:incidenceVariety}
W := \{(f_1, f_2, \dots, f_k, x) :  f_i(x) = 0 \text{ for all } i\} \subseteq \cprod_{i=1}^k (\CC^*)^{A_i} \times (\CC^*)^n,
%W = \{(f_1, f_2, \dots, f_k, x) :  f_i(x) = 0\text{ for all } i = 1, 2, \dots, k\} \subseteq \cprod_{i=1}^k \PP^{m_i-1} \times (\CC^*)^n,
\end{equation}
and let the {\em tropical incidence variety} be the set 
$$
\cT W := \{(F_1, F_2, \dots, F_k, X) : X \in \cT(F_i)\text{ for all } i\} \subseteq \cprod_{i=1}^k \RR^{A_i} \times \RR^n.
% \{(F_1, F_2, \dots, F_k, X) :  X \in \cT(F_i) \text{ for all } i = 1, 2, \dots, k\} \subseteq \cprod_{i=1}^k \RR^{m_i} \times \RR^n.
$$
The tropical incidence variety is the tropical prevariety \cite{BJSST} defined by the tropicalization of the polynomials $f_1, f_2, \dots, f_k$, where $f_i$ is considered as a polynomial in $m_i + n$ variables whose support in the $n$ variables is $A_i$ and whose $m_i$ terms have indeterminate coefficients.  Even if $A_i$ contains repeated points, the support of $f_i$ in $m_i + n$ variables has no repeated points.

\begin{lem}
\label{lem:incidence}
The polynomials $f_1, f_2, \dots, f_k$ form a tropical basis for the incidence variety $W$, i.e.\ the tropical incidence variety coincides with the support of the tropicalization of the incidence variety.
\end{lem}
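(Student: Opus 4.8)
The plan is to show both inclusions between $\cT W$ (the tropical prevariety cut out by the tropicalizations of $f_1,\dots,f_k$) and $\cT(I_W)$, where $I_W$ is the ideal of the incidence variety $W$ in the polynomial ring in the $m+n$ variables (the $m$ indeterminate coefficients and the $n$ torus variables). The inclusion $\cT(I_W)\subseteq \cT W$ is the easy direction and holds for any generating set: if $\omega$ lies in the tropical variety of $I_W$ then $\init_\omega(I_W)$ contains no monomial, hence in particular $\init_\omega(f_i)$ is not a monomial for each $i$, which is exactly the condition that the corresponding point lies in each $\cT(F_i)$. So the content of the lemma is the reverse inclusion, i.e.\ that $\{f_1,\dots,f_k\}$ is in fact a \emph{tropical basis}.

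For the reverse inclusion I would exploit the very special shape of the polynomials $f_i$: each $f_i=\sum_{j=1}^{m_i} c_{ij}\, x^{a_{ij}}$ is \emph{linear} in its own coefficient variables $c_{i,1},\dots,c_{i,m_i}$, and these blocks of coefficient variables are disjoint across different $i$. The key structural observation is that $W$ is, fiberwise over the torus $(\CC^*)^n$ in the $x$-coordinates, a product of linear spaces: for fixed $x\in(\CC^*)^n$, the equation $f_i(x)=0$ is a single nonzero linear equation in the $c_{ij}$'s (nonzero because $m_i\ge 2$ and the $x^{a_{ij}}$ are units in the torus), so the fiber is a product over $i$ of hyperplanes in the coefficient spaces. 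Equivalently, the projection $W\to(\CC^*)^n$ is a surjective map whose fibers are linear; one can make this precise either by a direct parametrization (solve each $f_i(x)=0$ for one coefficient in terms of the others and of $x$) or by observing that $W$ is irreducible of the expected dimension $m-k+n$ and is a complete intersection cut out by the $f_i$. I would then use the fact that for such "triangular" or linearly-fibered systems the defining equations form a tropical basis — concretely, given $\omega=(U_1,\dots,U_k,X)\in\cT W$, I want to produce a curve (or a point over a valued field extension) in $W$ with valuation $\omega$. Because $X$ lies in each $\cT(F_i)$, inside each coefficient block one can choose coefficient values over the Puiseux series field whose valuations realize $U_i$ and which solve $f_i(x)=0$ at a point $x$ with $\val(x)=X$: pick $x$ with the prescribed valuation, note $f_i(x)$ tropically has its minimum attained at least twice so the corresponding linear equation in the $c_{ij}$ has a solution with the prescribed valuations (a standard lift of a tropical linear-space point, which works for a single hyperplane with no genericity needed). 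Doing this independently for each $i$ (the blocks don't interact) yields a point of $W$ over the Puiseux field with valuation $\omega$, hence $\omega\in\cT(I_W)$ by the fundamental theorem of tropical geometry.

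The main obstacle is making the lifting step uniform: I must choose the \emph{same} $x$ with valuation $X$ for all $i$ simultaneously, and then for that fixed $x$ solve each $f_i(x)=0$ in the coefficients with the prescribed valuations $U_i$. The first part is unproblematic because $X\in\RR^n$ is lifted to any $x\in((\CC\{\!\{t\}\!\}^*)^n$ with $\val(x)=X$, for instance $x_j=t^{X_j}$. For the second part, fixing such an $x$, the tropicalized equation $\bigoplus_j (U_{ij}\odot X^{a_{ij}})$ has its minimum attained at least twice (this is exactly $X\in\cT(F_i)$); choosing generic lifts $c_{ij}=\gamma_{ij}t^{U_{ij}}$ for the indices \emph{not} achieving the minimum and for all but one index achieving it, the equation $f_i(x)=0$ becomes linear in the remaining coefficient and is solvable with a solution of valuation exactly $U_{ij}$ precisely because the minimum was attained at least twice, so no unwanted cancellation lowers the valuation. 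This gives the desired lift block by block. A secondary point to check is that $\cT W$ and $\cT(I_W)$ are compared as \emph{sets} (supports), so I need not match fan structures; and if one prefers to avoid Puiseux lifts, the same argument can be phrased via the Fundamental Theorem or via an explicit elimination showing $\init_\omega(I_W)$ is generated by the $\init_\omega(f_i)$, which again contains no monomial because each $\init_\omega(f_i)$ is a sum of at least two terms in distinct variables.
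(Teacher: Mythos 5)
Your proposal is correct and takes essentially the same route as the paper's proof: lift $X$ first to $x_0=(t^{X_1},\dots,t^{X_n})$ over the Puiseux series field, then lift the coefficients block by block by solving each $f_i(x_0)=0$ as a single (tropicalized) hyperplane in the $c_{ij}$'s --- the paper simply cites Kapranov's theorem for this step, whereas you carry out the one-coefficient elimination with generic leading terms explicitly. The only small point the paper adds is that, since both sets are supports of rational fans, it suffices to compare rational points (so that $t^{X_j}$ is an honest Puiseux series); your remark about passing to a general valued field extension addresses the same issue.
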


\begin{proof} 
Let $K$ be the field of Puiseux series in $t$ with complex coefficients.  By the Fundamental Theorem of Tropical Geometry \cite{JMM, MaclaganSturmfels}, $\omega \in \cT(I) \cap \QQ^n$ if and only if $\omega = \val(x)$ for some $K$-valued point $x$ in the variety of $I$.  Since our fans are rational, it suffices to check that they agree on rational points.
Let $(F_1, F_2, \dots, F_k, X)$ be a rational point in the tropical prevariety, i.e.\ $F_1, F_2, \dots, F_k$ are (coefficient vectors of) tropical polynomials with support sets $A_1, A_2, \dots, A_k$, and $X \in \QQ^n$ is a tropical solution for each $F_i$.  We will show that this tuple can be lifted to a $K$-valued point in $W$, by first lifting $X$, then $F_1,F_2,\dots,F_n$. 
Let $x_0 = (t^{X_1}, t^{X_2}, \dots, t^{X_k}) \in (K^*)^n$.  
Then $F_i \in \QQ^{m_i}$ is contained in the tropical hypersurface of $f_{i}(x_0)$ considered as a polynomial in the indeterminate coefficients.  By the hypersurface case of the Fundamental Theorem (also known as Kapranov's Theorem) there is a tuple $c_i \in (K^*)^{m_i}$ of coefficients of $f_i$ with $\val(c_i) = F_i$ giving $f_{i}(x_0) = 0$.  Therefore $(F_1, F_2, \dots, F_k, X)$ can be lifted to the incidence variety and lies in the tropicalization of the incidence variety.
% The tropical incidence variety projects surjectively onto the second factor $\RR^n$, and each fiber has codimension $k$.  Therefore the tropical prevariety is pure and is a complete intersection.  By \cite{OP}, the tropical prevariety coincides with the tropical variety.
\end{proof}
A consequence of Lemma~\ref{lem:incidence} is that we may identify the tropical incidence variety with the support of the tropicalization of $W$ and we define its multiplicities accordingly.

The following lemma follows immediately from the definitions.  It is a tropical counterpart of an analogous statement for classical resultants.

\begin{lem}
\label{lem:projIncidence}
The tropical resultant is the projection of the tropical incidence variety onto the first factor.
\end{lem}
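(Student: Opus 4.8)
The plan is to verify a set-theoretic equality by directly unwinding the two definitions involved; no geometry beyond bookkeeping is required, which is why the statement is labelled as immediate. Write $\pi \colon \cprod_{i=1}^k \RR^{A_i} \times \RR^n \to \cprod_{i=1}^k \RR^{A_i}$ for the coordinate projection that forgets the $\RR^n$ block, i.e.\ the ``first factor'' referred to in the statement. I would then establish $\pi(\cT W) = \cT\cR(\cA)$ through the following chain of equivalences: a tuple $(F_1, \dots, F_k)$ lies in $\pi(\cT W)$ if and only if there exists $X \in \RR^n$ with $(F_1, \dots, F_k, X) \in \cT W$; by the definition of $\cT W$ this holds if and only if there exists $X \in \RR^n$ with $X \in \cT(F_i)$ for every $i$; that is, if and only if $\bigcap_{i=1}^k \cT(F_i) \neq \emptyset$; and by the definition of $\cT\cR(\cA)$ this last condition is exactly the assertion that $(F_1, \dots, F_k) \in \cT\cR(\cA)$. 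Since every link in the chain is a biconditional, both inclusions follow simultaneously.

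The one point worth flagging is that this lemma, like Lemma~\ref{lem:incidence}, is a statement about supports (underlying point sets): it identifies $\cT\cR(\cA)$ set-theoretically with the image $\pi(\cT W)$, and any claim about the induced fan structure or about multiplicities would need a separate argument. This is consistent with the setup of the paper, where the multiplicities of $\cT\cR(\cA)$ are defined only a posteriori, via the identification of $\cT\cR(\cA)$ with the tropicalization of $\cR(\cA)$ in Theorem~\ref{thm:tropicalequalstropicalized}. Consequently there is no genuine obstacle here; the content of the lemma is simply that the classical principle ``a system is solvable if and only if the incidence variety projects onto the corresponding point'' survives tropicalization verbatim, which is forced by the very way $\cT(F_i)$, $\cT W$, and $\cT\cR(\cA)$ were defined.
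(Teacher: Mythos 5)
Your proof is correct and matches the paper exactly: the paper states this lemma without proof as following immediately from the definitions, and your chain of biconditionals is precisely that definitional unwinding. Your remark that the identification is purely set-theoretic, with multiplicities handled separately via Theorem~\ref{thm:tropicalequalstropicalized}, is also consistent with the paper's treatment.
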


Let $\pi$ be the projection from $\RR^{m} \times \RR^{n}$, where the incidence variety lies, to the first factor $\RR^{m}$. 
We can now prove Theorem~\ref{thm:tropicalequalstropicalized}.

\begin{proof}[Proof of Theorem~\ref{thm:tropicalequalstropicalized}]
The resultant variety $\cR(\cA)$ is obtained from the incidence variety $W$ by projecting onto the first factor $\cprod_{i=1}^k (\CC^*)^{A_i}$ and taking the closure.  This proves the first of the following equalities.
$$\cT(\cR(\cA))=\cT(\overline{\pi(W)})=\pi(\cT(W))=\pi(\cT W)=\cT\cR(\cA)$$
The second follows from \cite{SturmfelsTevelev}  which says that the tropicalization of the closure of a projection of $W$ is the projection of the tropicalization of $W$. The third is Lemma~\ref{lem:incidence}, and the last is Lemma~\ref{lem:projIncidence}.
%By \cite{SturmfelsTevelev}, monomial maps become linear maps under tropicalization, so the tropical variety of $\cR(\cA)$ is the image of the tropicalization of the incidence variety under projection onto the first factor $\RR^m$.  The result follows from Lemmas~\ref{lem:incidence} and \ref{lem:projIncidence} above.
%Since the tropicalization of the incidence variety coincides with the tropical incidence variety by Lemma~\ref{lem:incidence}, and the projection of the tropical incidence variety is the tropical resultant by definition, we conclude that the tropicalization of $\cR$ coincides with $\cT\cR$.
\end{proof}

For each $i = 1,2,\dots,k$, let $\widetilde{P_i}$ be the Newton polytope of $f_i$ in $\RR^{m_i} \times \RR^n$, which is in turn embedded in $\RR^{m} \times \RR^n$.   The tropical incidence variety is equal to the intersection $\cT(\widetilde{P_1})\cap \cdots \cap \cT(\widetilde{P_k})$, which is a union of normal cones of $\widetilde{P_1}+\cdots+\widetilde{P_k}$ associated to faces that are Minkowski sums of faces of dimension at least one.

%Each $\widetilde{P_i}$ is a simplex in $\RR^m \times \RR^n$; in particular, the exponent of every term in $f_i$ is a vertex, and there is an edge between every pair of vertices.  
The vertices of $\widetilde{P_1}, \dots, \widetilde{P_k}$ together linearly span an $m$-dimensional subspace in $\RR^m \times \RR^n$.  Projecting this onto $\RR^m$ takes each $\widetilde{P_i}$ isomorphically onto the standard simplex in $\RR^{m_i}$ which is embedded in $\RR^m$.  In particular, the Minkowski sum $\widetilde{P_1}+ \cdots+ \widetilde{P_k}$ projects isomorphically onto the Minkowski sum of standard simplices lying in orthogonal subspaces. It follows that every maximal cone in the tropical incidence variety appears uniquely as the intersection of some normal cones to edges of $\widetilde{P_1}, \widetilde{P_2}, \dots, \widetilde{P_k}$.  

The tropical incidence variety is
\begin{equation}
 \bigcup_{(E_1, E_2, \dots, E_k)}  \left( \bigcap_{i=1}^k \cN(\widetilde{E_i}) \right)
 \label{eqn:incidence}
\end{equation}
where the union runs over all choices of pairs $E_i$ of points from $A_i$ and $\cN(\widetilde{E_i})$ denotes the inner normal cone of the corresponding edge $\widetilde{E_i}$ in $\widetilde{P_i}$.  Even if the pair $E_i$ does not form an edge in the convex hull $Q_i$ of $A_i$, the pair $\widetilde{E_i}$ is always an edge of the simplex $\widetilde{P_i}$, so $\cN(\widetilde{E_i})$ has the right dimension.

\begin{lem}
Every maximal cone in the tropical incidence variety $\cT W=\cT(W)$ has multiplicity one.
\end{lem}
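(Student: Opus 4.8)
The plan is to compute the multiplicity directly from the description of the tropical incidence variety in \eqref{eqn:incidence} as a union of intersections of edge normal cones $\bigcap_{i=1}^k \cN(\widetilde{E_i})$. Fix a maximal cone $C$ of $\cT W$ and let $\omega$ be a generic point in its relative interior; by the paragraph preceding the lemma, $C$ arises uniquely as $\bigcap_{i=1}^k \cN(\widetilde{E_i})$ for a uniquely determined choice of pairs $E_i = \{a_{i,p_i}, a_{i,q_i}\}$. Since $\cT W = \cT(W)$ with multiplicities by Lemma~\ref{lem:incidence}, it suffices to show $\dim_\kk(\kk[\ZZ^{m+n} \cap C^\perp]/\langle \init_\omega(I_W)\rangle) = 1$, where $I_W$ is the ideal of the incidence variety $W$.

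First I would identify $\init_\omega(I_W)$. The incidence variety is cut out by the $k$ polynomials $f_i = \sum_{j=1}^{m_i} c_{ij}\, x^{a_{ij}}$ (in the $m_i+n$ variables $c_{i1},\dots,c_{im_i},x_1,\dots,x_n$), and by Lemma~\ref{lem:incidence} these form a tropical basis, so $\init_\omega(I_W) = \langle \init_\omega(f_1), \dots, \init_\omega(f_k)\rangle$ — here one must be slightly careful and instead argue that the initial forms of the $f_i$ generate the initial ideal because the $f_i$ already form a Gr\"obner basis with respect to $\omega$ on the relevant stratum; this is essentially the content of $W$ being a complete intersection whose defining equations have Newton polytopes $\widetilde{P_i}$ meeting transversally after projection to $\RR^m$. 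Because $\omega$ is generic in $C = \bigcap \cN(\widetilde{E_i})$, the $\omega$-minimum of $f_i$ is attained exactly on the two terms indexed by $E_i$, so $\init_\omega(f_i) = c_{i,p_i} x^{a_{i,p_i}} + c_{i,q_i} x^{a_{i,q_i}}$ is a binomial.

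Next I would analyze the quotient ring. After passing to $\kk[\ZZ^{m+n}\cap C^\perp]$, the lattice $\ZZ^{m+n}\cap C^\perp$ is exactly the lattice of weight-zero monomials, and in this Laurent ring each binomial $\init_\omega(f_i)$ becomes, up to a unit monomial, $1 + (\text{monomial } u_i)$ or equivalently identifies $x^{a_{i,p_i}} c_{i,p_i}$ with a scalar multiple of $x^{a_{i,q_i}} c_{i,q_i}$; imposing all $k$ such relations quotients the Laurent ring $\kk[\ZZ^{m+n}\cap C^\perp]$ by the sublattice generated by the $k$ exponent differences $\widetilde{E_i}$'s direction vectors. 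The key point, already established in the excerpt, is that the direction vectors of $\widetilde{E_1},\dots,\widetilde{E_k}$ are primitive and their span is a saturated (unimodular) sublattice: indeed projecting to $\RR^m$ sends $\widetilde{E_i}$ to an edge of a standard simplex in the $i$-th orthogonal block, and distinct blocks are coordinate-orthogonal, so the $k$ vectors form part of a lattice basis. Hence the quotient of $\kk[\ZZ^{m+n}\cap C^\perp]$ by these $k$ binomial relations is a Laurent polynomial ring of Krull dimension $(m+n) - \dim C - k$, which by the dimension count $\dim \cT W = m+n-k$ (the incidence variety has codimension $k$) equals $0$; a $0$-dimensional Laurent ring obtained by quotienting by a saturated binomial ideal is just $\kk$ itself, giving multiplicity $1$.

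The main obstacle I anticipate is the bookkeeping in the second step: rigorously justifying that $\init_\omega(I_W)$ is generated by the binomials $\init_\omega(f_i)$ and that the relevant exponent lattice is saturated, so that the quotient is reduced and hence has length exactly $1$ rather than some higher multiplicity. This reduces to the unimodularity observation about the edges $\widetilde{E_i}$ of the orthogonally-placed standard simplices, which the excerpt has already done the work to set up; once that is in hand, the computation $\dim_\kk = 1$ is immediate. An alternative, cleaner route avoiding initial ideals would be to use the projection formula for multiplicities: the projection $\pi$ restricted to $C$ is a lattice isomorphism onto its image (again by the orthogonal-simplices argument), and $W$ itself is smooth (it is a fiber bundle over $(\CC^*)^n$ with fibers products of linear spaces in the $c$-coordinates), so every multiplicity of $\cT(W)$ is $1$; I would present whichever of these two arguments is shorter.
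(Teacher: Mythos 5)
Your main route is essentially the paper's: at a generic point $\omega$ of a maximal cone the initial forms $\init_\omega(f_i)$ are binomials, and the $k$ edge vectors form part of a lattice basis (your orthogonal-simplices observation is the paper's ``full-rank identity submatrix'' argument), so the quotient by those binomials has dimension equal to a lattice index, which is $1$. The genuine problem is how you complete the computation. Your justification that $\init_\omega(I_W)=\langle\init_\omega(f_1),\dots,\init_\omega(f_k)\rangle$ does not stand: being a tropical basis (Lemma~\ref{lem:incidence}) is a statement about supports of tropical hypersurfaces and does not imply that the initial forms generate the initial ideal, and your fallback (``the $f_i$ form a Gr\"obner basis on the relevant stratum,'' ``essentially the content of transversality'') is precisely the assertion that would need proof; you flag it as the main obstacle and never discharge it. The repair is to notice that equality is not needed: since $f_i\in I_W$, the binomials lie in $\init_\omega(I_W)$, so $\dim_\CC\bigl(\CC[\ZZ^{m+n}\cap C_\omega^\perp]/\langle\init_\omega(I_W)\rangle\bigr)$ is bounded \emph{above} by the index of the sublattice spanned by the edge vectors, namely $1$, while it is at least $1$ because $\omega\in\cT(W)$ means $\init_\omega(I_W)$ contains no monomial and the quotient is nonzero. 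This one-sided argument is exactly what the paper does, and it makes your unproven Gr\"obner-basis claim unnecessary.

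Your proposed ``cleaner alternative'' rests on a false principle: smoothness of $W$ does not imply that all multiplicities of $\cT(W)$ are one. For instance, $1+x^2+y^2$ defines a smooth curve in $(\CC^*)^2$ whose tropicalization has rays of multiplicity $2$. A correct version of that idea would use the monomial automorphism $c_{ij}\mapsto c_{ij}x^{a_{ij}}$ of the torus, which is unimodular and carries $W$ to a product of tropicalized-multiplicity-one hyperplane complements with a torus; but as stated, ``$W$ is smooth, hence all multiplicities are $1$'' is not a valid step.
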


\begin{proof}
Since every vertex of every $\widetilde{P_i}$ has its own coordinate, the dimension of a face of the Minkowski sum $\widetilde{P_1}+\widetilde{P_2}+\cdots+\widetilde{P_k}$ minimizing a vector $\omega \in \RR^m \times \RR^n$ is the sum of the dimensions of the faces of each $\widetilde{P_i}$ with respect to $\omega$. The dimension of the incidence variety is $m+n-k$ and therefore, for a generic $\omega\in\cT(W)$, the face of $\widetilde{P_1}+\widetilde{P_2}+\cdots+\widetilde{P_k}$ minimizing $\omega$ has dimension $k$ and must be a zonotope. Consequently the forms $\init_\omega(f_1),\init_\omega(f_2),\dots,\init_\omega(f_k)$ are binomials, each with an associated edge vector $v_i\in\ZZ^{m+n}$. The vectors $v_1,v_2,\dots,v_k$ generate $C_\omega^\perp$ and after multiplying each $\init_\omega(f_i)$ by a monomial it ends up in $\langle\init_\omega(I)\rangle\subseteq\CC[\ZZ^{m+n}\cap C_\omega^\perp]$. Hence using the binomials to rewrite modulo $\langle\init_\omega(I)\rangle$ we get that $\dim_\CC(\CC[\ZZ^{m+n}\cap C_\omega^\perp]/\langle\init_\omega(I)\rangle)$ is bounded by the index of the sublattice generated by $v_1,v_2,\dots,v_n$ in $\ZZ^{m+n}\cap C_\omega^\perp$.
%The multiplicity of a cone $\bigcap_{i=1}^k \cN(\widetilde{E_i})$ in the tropical traverse intersection  (\ref{eqn:incidence}) is the volume of the parallelepiped spanned by the edges $\widetilde{E_1}, \widetilde{E_2}, \dots, \widetilde{E_k}$, where the volume of the smallest parallelepiped of the lattice $\RR\{\widetilde{E_1}, \widetilde{E_2}, \dots, \widetilde{E_k} \} \cap \ZZ^n $ is normalized to one.  In other words, the multiplicity is the lattice index of the sublattice generated by the edges $\widetilde{E_1}, \widetilde{E_2}, \dots,\widetilde{E_k}$. 
If we write the edge vectors as columns of a matrix, then the matrix contains a full-rank identity submatrix, so the sublattice has index one.
\end{proof}

 The tropical resultant is the projection of the tropical incidence variety, so
$$
\cT\cR(\cA) = \bigcup_{(E_1, E_2, \dots, E_k)} \pi \left(  \bigcap_{i=1}^k \cN(\widetilde{E_i}) \right).
$$

The {\em Cayley configuration} $\Cay(\cA)$ of a tuple  $\cA = (A_1, A_2, \dots, A_k)$ of point configurations in $\ZZ^{n}$ is defined to be the point configuration 
$$\Cay(\cA) = (\{e_1\} \times A_1) \cup \cdots \cup (\{e_k\} \times A_k)$$ in $\ZZ^{k} \times \ZZ^{n}$. We will also use $\Cay(\cA)$ to denote a matrix whose columns are points in the Cayley configuration.
See Example~\ref{ex:3tri}.

\begin{lem}
\label{lem:correspondence}
Let $E = (E_1, E_2, \dots, E_k)$ be a tuple of pairs from $A_1$, $A_2$, $\dots$, $A_k$ respectively.  Then the following cones coincide:
$$
 \pi \left( \bigcap_{i=1}^k \cN(\widetilde{E_i}) \right) = \RR_{\geq 0}\{e_{ij} : a_{ij} \notin E_i\} + \row(\Cay(\cA)).
$$
\end{lem}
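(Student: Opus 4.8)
The plan is to unwind both sides into explicit coordinate descriptions and match them term by term; the lemma is essentially a bookkeeping identity once the right parametrizations are in place.

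First I would fix coordinates: index the coordinates of $\RR^m$ by pairs $(i,j)$ with $1\le i\le k$, $1\le j\le m_i$, with standard basis vectors $e_{ij}$, so that $\widetilde{P_i}=\conv\{(e_{ij},a_{ij}):1\le j\le m_i\}\subseteq\RR^m\times\RR^n$, and write $E_i=\{p_i,q_i\}$ as a pair of indices. Unwinding the definition of the inner normal cone of the edge $\widetilde{E_i}$, a point $(u,w)\in\RR^m\times\RR^n$ lies in $\cN(\widetilde{E_i})$ exactly when
$$u_{ip_i}+\langle w,a_{ip_i}\rangle=u_{iq_i}+\langle w,a_{iq_i}\rangle\le u_{ij}+\langle w,a_{ij}\rangle\quad\text{for all }1\le j\le m_i;$$
this is just the statement that $\Trop(f_i)$, viewed in the $m_i+n$ variables, attains its minimum simultaneously at the monomials indexed by $p_i$ and $q_i$. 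Separately I would record that the rows of $\Cay(\cA)$ are the $k$ indicator vectors of the blocks together with the $n$ vectors $((a_{ij})_\ell)_{(i,j)}$, so that
$$\row(\Cay(\cA))=\bigl\{(\lambda_i+\langle\mu,a_{ij}\rangle)_{(i,j)}:\lambda\in\RR^k,\ \mu\in\RR^n\bigr\}.$$

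For the inclusion $\subseteq$, given $(u,w)\in\bigcap_i\cN(\widetilde{E_i})$, I would set $\lambda_i:=u_{ip_i}+\langle w,a_{ip_i}\rangle$ (the common minimum value in block $i$, well defined by the equality above), $\mu:=-w$, and $s_{ij}:=u_{ij}+\langle w,a_{ij}\rangle-\lambda_i$ for $j\notin E_i$; the normal-cone inequalities give $s_{ij}\ge0$. Then $u_{ij}=\lambda_i+\langle\mu,a_{ij}\rangle+s_{ij}$ with $s_{ij}=0$ for $j\in E_i$, so $\pi(u,w)=u$ decomposes as an element of $\row(\Cay(\cA))$ plus $\sum_i\sum_{j\notin E_i}s_{ij}e_{ij}\in\RR_{\ge0}\{e_{ij}:j\notin E_i\}$. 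For the reverse inclusion, given $u=(\lambda_i+\langle\mu,a_{ij}\rangle)_{(i,j)}+\sum_{i,\,j\notin E_i}s_{ij}e_{ij}$ with all $s_{ij}\ge0$, I would put $w:=-\mu$ and check directly that $u_{ij}+\langle w,a_{ij}\rangle$ equals $\lambda_i$ for $j\in E_i$ and $\lambda_i+s_{ij}\ge\lambda_i$ otherwise, whence $(u,w)\in\cN(\widetilde{E_i})$ for every $i$ and $\pi(u,w)=u$ lies in the left-hand side.

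There is no serious obstacle here; the only points requiring care are choosing the representative of the fiber correctly — the common minimum value of each block supplies the coordinate $\lambda_i$ and the negative of the $\RR^n$-component supplies $\mu$, so the affine part of $u$ assembles exactly into $\row(\Cay(\cA))$ rather than some larger space — and keeping the multiset bookkeeping straight: when $A_i$ has repeated points one reads $E_i$ as a pair of \emph{indices}, and the already-noted fact that $\widetilde{E_i}$ is always a genuine edge of the simplex $\widetilde{P_i}$ is what makes $\cN(\widetilde{E_i})$ a well-behaved codimension-one cone regardless of whether $E_i$ is an edge of $Q_i$.
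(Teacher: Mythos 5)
Your proof is correct and is essentially the paper's argument written out in coordinates: the paper describes both sides as tuples admitting a common point $w$ at which each $F_i$ attains its minimum on $E_i$, using that adding the first $k$ rows of $\Cay(\cA)$ tropically rescales and adding the last $n$ rows translates the solution sets, which is exactly your decomposition $u_{ij}=\lambda_i+\langle\mu,a_{ij}\rangle+s_{ij}$ with $\mu=-w$ and slacks $s_{ij}\ge 0$ vanishing on $E_i$. Nothing is missing; the explicit two-inclusion bookkeeping you give is just the unwound form of that same identification.
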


\begin{proof}
Let $E$ be fixed.  The left hand side consists of tuples of tropical polynomials $(F_1, F_2, \dots, F_k) \in \cprod_{i=1}^k\RR^{A_i}$ for which there is a point $w \in \RR^n$ attaining the minimum for $F_i$ at $E_i$ for every $i$.

On the other hand, the cone $\RR_{\geq 0}\{e_{ij} : a_{ij} \notin E_i\}$ consists of all  $F = (F_1, F_2, \dots, F_k)$ such that the minimum for $F_i$ evaluated at the point $0 \in \RR^n$ has value $0$ and is attained at $E_i$ for every $i$. 
The tropical solution sets remains the same if coefficients of $F_i$ are changed by a tropical scalar multiple, which corresponds to adding to $F$ a multiple of the $i$-th row of $\Cay(\cA)$. 
For $w \in \RR^n$ and $F \in \RR^A$,
$$
F(x - w) = \min_{(c,a) \in F} c + a \cdot (x - w) = (F-w \cdot A)(x),\\
$$
where $A$ denotes the matrix whose columns are points in $A_1 \cup \cdots \cup A_k$, i.e.\ $A$ consists of the last $n$ rows of $\Cay(\cA)$, so
$$
\cT(F) + w = \cT(F - w A).
$$ 
Therefore, changing the coefficients $(F_1, F_2, \dots, F_k)$ by an element in the row space of $\Cay(\cA)$ has the effect of tropically scaling $F_i$'s and translating all the tropical solution sets together.  Thus the set on the right hand side consists of all tuples $(F_1, F_2, \dots, F_k)$ having a point $w \in \RR^n$ achieving the minimum for $F_i$ at $E_i$ for every $i$.
\end{proof}

The following result gives a simple description of the tropical resultant as a union of cones with multiplicities.  

\begin{thm}
\label{thm:orthants}
The tropical resultant of $\cA$ is the set 
\begin{equation}  
\label{eqn:orthants}
\cT\cR(\cA) =  \bigcup_{E} \RR_{\geq 0}\{e_{ij} : a_{ij} \notin E_i\} + \row(\Cay(\cA))
\end{equation}
where $E = (E_1, E_2, \dots, E_k)$ and each $E_i$ consists of two elements in $A_i$.  The multiplicity of the cone associated to $E$ is the index of the lattice spanned by the rows of $\Cay(E)$ in $\row(\Cay(E)) \cap \ZZ^m$.
\end{thm}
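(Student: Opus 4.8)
The set-theoretic part of~\eqref{eqn:orthants} requires no new work. By Lemma~\ref{lem:projIncidence} the tropical resultant is $\pi(\cT W)$; by~\eqref{eqn:incidence} the tropical incidence variety is $\cT W=\bigcup_E\bigcap_{i}\cN(\widetilde{E_i})$; and by Lemma~\ref{lem:correspondence} each $\pi\bigl(\bigcap_i\cN(\widetilde{E_i})\bigr)$ is exactly $\RR_{\ge0}\{e_{ij}:a_{ij}\notin E_i\}+\row(\Cay(\cA))$. Combining these gives the displayed union. So the content of the theorem is the multiplicity assertion, and the plan is to transport multiplicities along $\pi$ from $\cT W$, where -- by the lemma asserting that every maximal cone of $\cT W$ has multiplicity one -- they are all trivial.

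I would fix $\omega$ in the relative interior of a maximal cone of $\cT\cR(\cA)$, generic enough to meet the relative interior of only the full-dimensional cones $\tau_E:=\RR_{\ge0}\{e_{ij}:a_{ij}\notin E_i\}+\row(\Cay(\cA))$ passing through it and off every wall. Since $\cR(\cA)=\overline{\pi(W)}$, the multiplicity of $\cT\cR(\cA)$ at $\omega$ is governed by the push-forward of the weighted fan $\cT W$ under $\pi$, via the elimination theory of Sturmfels--Tevelev~\cite{SturmfelsTevelev} already used for Theorem~\ref{thm:tropicalequalstropicalized}; when $\pi|_W$ fails to be generically finite onto its image one first intersects $W$ with a generic subtorus of the right codimension -- equivalently, intersects $\cT W$ stably with a generic affine subspace -- to reduce to that case and then reassembles. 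For a linear projection the image cone $\pi(\sigma_E)=\tau_E$ of the weight-one cone $\sigma_E=\bigcap_i\cN(\widetilde{E_i})$ acquires the lattice index
\[
\bigl[\,\ZZ^m\cap\spann\tau_E \;:\; \pi(\ZZ^{m+n}\cap\spann\sigma_E)\,\bigr],
\]
so what remains is to see that over a generic $\omega$ essentially one such $E$ contributes (summing over repeats when several $E$ give the same cone) and to evaluate this index.

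The evaluation is linear algebra with the Cayley matrix. From the proof of Lemma~\ref{lem:correspondence}, $\pi(\spann\sigma_E)=\spann\tau_E=\RR\{e_{ij}:a_{ij}\notin E_i\}+\row(\Cay(\cA))$; splitting off the free summand $\ZZ\{e_{ij}:a_{ij}\notin E_i\}$, which lies in both lattices above, reduces the index to $\bigl[\row(\Cay(E))\cap\ZZ^m:\Lambda\bigr]$, where $\Lambda$ is the image in the $E$-coordinates of $\ZZ^{m+n}\cap\spann\sigma_E$. Now $\spann\sigma_E$ is the orthogonal complement of the edge vectors $v_i=(e_{i,p_i}-e_{i,q_i},\,a_{i,p_i}-a_{i,q_i})$ of $\widetilde{E_i}$, and these together with the explicit identity submatrix recorded in the proof of the multiplicity-one lemma make $\ZZ^{m+n}\cap\spann\sigma_E$ completely explicit; an elementary lattice computation (e.g.\ via Smith normal form) then identifies $\Lambda$ with the $\ZZ$-span of the rows of $\Cay(E)$. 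The two points one uses are that the rows of $\Cay(E)$, read in the $E$-coordinates, are restrictions of the rows of $\Cay(\cA)$, and that the pair structure of each $E_i$ forces the difference $w_{i,p_i}-w_{i,q_i}$ of an integer point of $\spann\sigma_E$ to range over $\{\,w_x\cdot(a_{i,p_i}-a_{i,q_i}) : w_x\in\ZZ^n\,\}$. Hence the multiplicity of $\tau_E$ is the index of $\langle\text{rows of }\Cay(E)\rangle$ in $\row(\Cay(E))\cap\ZZ^m$.

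The main obstacle is the middle step: the behaviour of tropical multiplicities under the projection $\pi$, which need not be birational onto $\cR(\cA)$. One must handle positive-dimensional fibres of $\pi|_W$ (through generic slicing / stable intersection), keep track of the degree of $\pi|_W$ in the Sturmfels--Tevelev formula, and confirm that over a generic $\omega$ the contributing cone of $\cT W$ is essentially unique, so that no spurious summation of lattice indices creeps in. By comparison, the Cayley-lattice identification of the previous paragraph is routine, if slightly fiddly, once all the lattices are written out.
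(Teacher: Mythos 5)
Your overall route is the paper's route: the set-theoretic statement is obtained exactly as you say from (\ref{eqn:incidence}) and Lemmas~\ref{lem:projIncidence} and~\ref{lem:correspondence}, and for multiplicities the paper likewise reduces the weight of $\tau_E=\pi(\sigma_E)$ to the lattice index $[\LL_\tau:\pi(\LL_\sigma)]$ with $\LL_\tau=\RR\tau\cap\ZZ^m$ and $\LL_\sigma=\RR\sigma\cap\ZZ^{m+n}$, computes $\LL_\sigma$ as the span of the lattice points of the lineality space of $\cT W$ (the rows of the matrix obtained by appending $0$ and $-I_n$ to $\Cay(\cA)$) together with the free vectors $e_{ij}$, $a_{ij}\notin E_i$, and then splits off those coordinates to identify the index with $[\row(\Cay(E))\cap\ZZ^{2k}:\ZZ\text{-span of the rows of }\Cay(E)]$. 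So your lattice computation is the intended one.

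The genuine gap is the step you yourself defer as ``the main obstacle.'' The paper closes it in one sentence by invoking the refinement in \cite{CTY} of the Sturmfels--Tevelev elimination formula, which directly attaches the weight $[\LL_\tau:\pi(\LL_\sigma)]$ to the image of each (multiplicity-one) cone $\sigma_E$ of $\cT W$; you offer no workable substitute, only a sketch of slicing, degree bookkeeping and ``reassembling.'' Worse, the way you propose to finish --- proving that ``over a generic $\omega$ essentially one such $E$ contributes,'' so that ``no spurious summation of lattice indices creeps in'' --- rests on a false premise: distinct tuples $E$ can give distinct full-dimensional cones $C_E$ (not merely repeats of the same cone) all containing a generic $\omega$, because a maximal secondary cone of $\cT\cR(\cA)$ may carry several fully mixed cells; the multiplicity of $\cT\cR(\cA)$ at such an $\omega$ is then the \emph{sum} of the per-cone indices, exactly as explained in the paragraph following the theorem and in Proposition~\ref{prop:secondaryMult} (Example~\ref{ex:3triLinks} shows several fully mixed cells occurring). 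The theorem only asserts the per-cone weight, so no uniqueness is needed, and an argument that tries to establish it would either fail or, if used to equate the per-point multiplicity with a single index, give the wrong answer whenever more than one fully mixed cell is present. Until the push-forward step is justified (for instance by the refinement the paper cites), the multiplicity half of the statement remains unproved in your write-up.
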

\noindent The set described in the right hand side of (\ref{eqn:orthants}) may not have a natural fan structure.
See Example~\ref{ex:3triLinks}(b).  

For a {\em generic} $\omega \in \cT\cR(\cA)$, we can compute the multiplicity of $\cT\cR(\cA)$ at $\omega$ as follows.  We say that $\omega$ is generic if all the cones on the right hand side of (\ref{eqn:orthants}) that contain $\omega$ are maximal-dimensional, contain $\omega$ in their relative interior, and have the same span.    Then the multiplicity at $\omega$ is the sum of multiplicities of the cones that contain $\omega$.  The generic points form a dense open set in $\cT\cR(\cA)$, and the lower-dimensional cones on the right hand side do not contribute to the multiplicity.

\begin{proof}[Proof of Theorem~\ref{thm:orthants}]
The set theoretic statement follows immediately from (\ref{eqn:incidence}) and Lemmas~\ref{lem:projIncidence} and~\ref{lem:correspondence}.

Let $\sigma = \bigcap_{i=1}^k \cN(\widetilde{E_i})$ be the cone corresponding to $E$ in the incidence variety, and $\tau = \pi(\sigma)$.
Using the refinement in \cite{CTY} of the multiplicity formula from tropical elimination theory \cite{SturmfelsTevelev}, the multiplicity of $\tau$ in the tropical resultant is the lattice index $[\LL_\tau : \pi(\LL_\sigma)]$, where $\LL_\tau = \RR \tau \cap \ZZ^m$ and $\LL_\sigma = \RR \sigma \cap \ZZ^{m+n}$.  The lattice $\LL_\sigma$ is defined by the following equations on $(c,x)\in\ZZ^{m+n}$
\begin{equation*}
\begin{split}
%c \cdot (e_{ij} - e_{ik}) + x \cdot (a_{ij} - a_{ik}) & \geq 0
%\mbox{ for } a_{ij} \in E_i , a_{ik} \in A_i \setminus E_i,  \mbox{ and } \\
c \cdot (e_{ij} - e_{ik}) + x \cdot (a_{ij} - a_{ik}) & = 0
\mbox{ for } \{ a_{ij} ,  a_{ik} \} =   E_i
\end{split}
\end{equation*}
and is spanned by the integer points in the lineality space of the tropical incidence variety and the standard basis vectors $e_{ij}$ for $a_{ij} \notin E_i$.  The rows of the following matrix span the lattice points in the lineality space of the incidence variety:
$$
\left[ 
\begin{array}{c|c}
\Cay(\cA) &  
\begin{array}{c}
0 \\ -I_n
\end{array}
\end{array}
\right].
$$
Hence $\pi(\LL_\sigma)$ is spanned by the rows of $\Cay(\cA)$ and the $e_{ij}$'s for $a_{ij} \notin E_i$. 
\end{proof}

The first summand in (\ref{eqn:orthants}) plus the linear span of the first $k$ rows of $\Cay(\cA)$ is a tropical linear space obtained as a Cartesian product of tropical hyperplanes.  
Hence Theorem~\ref{thm:orthants} can be rephrased as follows.
Let $C$ be the matrix consisting of the first $k$ rows of $\Cay(\cA)$, so the kernel of $C$ is defined by equations of the form $c_{i,1} + c_{i,2} + \cdots + c_{i,m_i} = 0$ for $i = 1,2,\dots,k$.
Then
the tropical resultant is the set
\begin{equation}  
\label{eqn:prodLinSpaces}
\cT\cR(\cA) =  \cT(\ker(C)) + \row \left[ A_1 | A_2 | \cdots | A_k \right].
\end{equation}
The tropical linear space here is trivial to compute, as it is described by the first summand of (\ref{eqn:orthants}).  By contrast the tropical linear space computation required for tropical discriminants in \cite{DFS} can be challenging.   The state of the art in computing tropical linear spaces is the work of Rinc\'on \cite{Rincon}.

\begin{ex}
\label{ex:3tri}
Consider the tuple $\cA = (A_1, A_2, A_3)$ of the following point configurations in $\ZZ^2$:
\begin{equation}
\begin{split}
\blue{A_1} &= \{ (0,0), (0,1), (1,0) \}, \\
\green{A_2} &= \{ (0,0), (1,0), (2,1) \}, \\
\red{A_3} &= \{ (0,0), (0,1), (1,2) \}.
\end{split}
\end{equation}

\begin{center}
\includegraphics[scale=1.5]{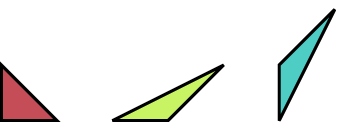}
\end{center}

The Cayley configuration $\Cay(\cA)$ consist of columns of the following matrix, which we also denote $\Cay(\cA)$:% by abuse of notation:
$$ \Cay(\cA) = \left(
 \begin{array}{ccc|ccc|ccc}
      1&
      1&
      1&
      0&
      0&
      0&
      0&
      0&
      0\\
      0&
      0&
      0&
      1&
      1&
      1&
      0&
      0&
      0\\
      0&
      0&
      0&
      0&
      0&
      0&
      1&
      1&
      1\\
\hline
0&
      1&
      0&
      0&
      1&
      {2}&
      0&
      0&
      1\\
      0&
      0&
      1&
      0&
      0&
      1&
      0&
      1&
      {2}
      \end{array}
\right)
$$

The corresponding system of polynomials consist of
\begin{equation}
\begin{split}
\blue{f_1} &= c_{11} + c_{12} y + c_{13} x ,\\
\green{f_2} &= c_{21} + c_{22} x + c_{23} x^2 y ,\\
\red{f_3} &= c_{31} + c_{32} y + c_{33} x y^2 .
\end{split}
\end{equation}

The point 
$$
(0,0,0,0,1,5,0,1,5)
$$
is in the tropical resultant variety because the tropical hypersurfaces of the three tropical polynomials
\begin{equation}
\begin{split}
\blue{F_1} &= 0 \oplus X \oplus Y, \\
\green{F_2} &= 0 \oplus (1 \odot X) \oplus (5 \odot X^{\odot 2} \odot Y) \\
\red{F_3} &= 0 \oplus (1 \odot Y) \oplus (5 \odot X \odot Y^{\odot 2} )
\end{split}
\end{equation}
contain the common intersection points $(-1,-1)$ and $(-2,-2)$.  See Figure~\ref{fig:3trianglesTropCurvesA}.

\begin{figure}
\begin{center}
\includegraphics[scale=0.5]{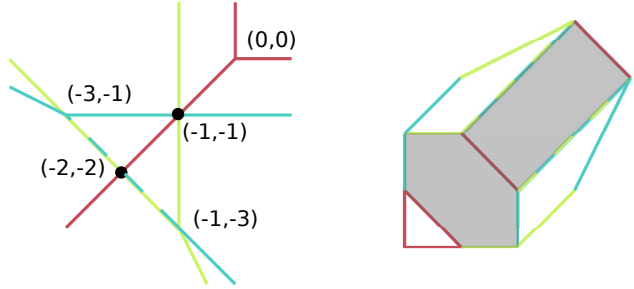}
\end{center}
\caption{A tropical hypersurface arrangement and its dual regular mixed subdivision (RMS) of the Minkowski sum of point configurations.  The mixed cells are shaded.  See Examples~\ref{ex:3tri} and~\ref{ex:3triLinks}(a).}
\label{fig:3trianglesTropCurvesA}
\end{figure}

Consider the incidence variety defined by the ideal
$$
I = \langle f_1, f_2, f_3 \rangle \subseteq \CC[c^{\pm 1}, x^{\pm 1}, y^{\pm 1}].
$$
The resultant variety is obtained by eliminating $x$ and $y$ from the system, i.e.\ it is defined by the ideal $I \cap \CC[c^{\pm 1}]$.  In this case, the resultant variety is a hypersurface defined by the resultant polynomial

\noindent
\begin{flushleft}
${c}_{1{2}}^{3} {c}_{{2}{3}}^{3} {c}_{{3}1}^{3}-2 {c}_{11}
      {c}_{1{2}}^{2} {c}_{{2}{3}}^{3} {c}_{{3}1}^{2}
      {c}_{{3}{2}}-{c}_{1{2}}^{2} {c}_{1{3}} {c}_{{2}{2}} {c}_{{2}{3}}^{2}
      {c}_{{3}1} {c}_{{3}{2}}^{2}+{c}_{11}^{2} {c}_{1{2}} {c}_{{2}{3}}^{3}
      {c}_{{3}1} {c}_{{3}{2}}^{2}-{c}_{1{2}} {c}_{1{3}}^{2} {c}_{{2}1}
      {c}_{{2}{3}}^{2} {c}_{{3}{2}}^{3}+{c}_{11} {c}_{1{2}} {c}_{1{3}}
      {c}_{{2}{2}} {c}_{{2}{3}}^{2} {c}_{{3}{2}}^{3}+3 {c}_{1{2}}^{2}
      {c}_{1{3}} {c}_{{2}1} {c}_{{2}{3}}^{2} {c}_{{3}1}^{2}
      {c}_{{3}{3}}+{c}_{11} {c}_{1{2}}^{2} {c}_{{2}{2}} {c}_{{2}{3}}^{2}
      {c}_{{3}1}^{2} {c}_{{3}{3}}+2 {c}_{1{2}}^{2} {c}_{1{3}}
      {c}_{{2}{2}}^{2} {c}_{{2}{3}} {c}_{{3}1} {c}_{{3}{2}}
      {c}_{{3}{3}}-{c}_{11} {c}_{1{2}} {c}_{1{3}} {c}_{{2}1}
      {c}_{{2}{3}}^{2} {c}_{{3}1} {c}_{{3}{2}} {c}_{{3}{3}}-{c}_{11}^{2}
      {c}_{1{2}} {c}_{{2}{2}} {c}_{{2}{3}}^{2} {c}_{{3}1} {c}_{{3}{2}}
      {c}_{{3}{3}}+2 {c}_{1{2}} {c}_{1{3}}^{2} {c}_{{2}1} {c}_{{2}{2}}
      {c}_{{2}{3}} {c}_{{3}{2}}^{2} {c}_{{3}{3}} -2 {c}_{11} {c}_{1{2}}
      {c}_{1{3}} {c}_{{2}{2}}^{2} {c}_{{2}{3}} {c}_{{3}{2}}^{2}
      {c}_{{3}{3}}-{c}_{1{2}}^{2} {c}_{1{3}} {c}_{{2}{2}}^{3} {c}_{{3}1}
      {c}_{{3}{3}}^{2}+3 {c}_{1{2}} {c}_{1{3}}^{2} {c}_{{2}1}^{2}
      {c}_{{2}{3}} {c}_{{3}1} {c}_{{3}{3}}^{2}-{c}_{11} {c}_{1{2}}
      {c}_{1{3}} {c}_{{2}1} {c}_{{2}{2}} {c}_{{2}{3}} {c}_{{3}1}
      {c}_{{3}{3}}^{2}-{c}_{11}^{3} {c}_{{2}1} {c}_{{2}{3}}^{2} {c}_{{3}1}
      {c}_{{3}{3}}^{2}-{c}_{1{2}} {c}_{1{3}}^{2} {c}_{{2}1}
      {c}_{{2}{2}}^{2} {c}_{{3}{2}} {c}_{{3}{3}}^{2}+{c}_{11} {c}_{1{2}}
      {c}_{1{3}} {c}_{{2}{2}}^{3} {c}_{{3}{2}} {c}_{{3}{3}}^{2}+{c}_{11}
      {c}_{1{3}}^{2} {c}_{{2}1}^{2} {c}_{{2}{3}} {c}_{{3}{2}}
      {c}_{{3}{3}}^{2}-{c}_{11}^{2} {c}_{1{3}} {c}_{{2}1} {c}_{{2}{2}}
      {c}_{{2}{3}} {c}_{{3}{2}} {c}_{{3}{3}}^{2}+{c}_{1{3}}^{3}
      {c}_{{2}1}^{3} {c}_{{3}{3}}^{3} -2 {c}_{11} {c}_{1{3}}^{2}
      {c}_{{2}1}^{2} {c}_{{2}{2}} {c}_{{3}{3}}^{3}+{c}_{11}^{2} {c}_{1{3}}
      {c}_{{2}1} {c}_{{2}{2}}^{2} {c}_{{3}{3}}^{3}$.
\end{flushleft}
      
\noindent It is homogeneous with respect to the rows of $\Cay(\cA)$.   Its Newton polytope is four-dimensional, has f-vector $(15,40,38,13,1)$ and lies in an affine space parallel to the kernel of $\Cay(\cA)$.

The tropical resultant is an eight-dimensional fan in $\RR^9$ with a five-dimensional lineality space $\row(\Cay(\cA))$.  As a subfan of the secondary fan of $\Cay(\cA)$, it consists of 89 (out of 338) eight-dimensional secondary cones, which can be coarsened to get the 40 normal cones dual to edges of the resultant polytope.  In other words, the 40 normal cones can be subdivided to obtain the 89 secondary cones.

In this example, the point configuration is essential, so $\cT(\cR\cA)$ is equal to the tropical discriminant of $\Cay(\cA)$, which is described in \cite{DFS} as
$$
\cT(\ker \Cay(\cA)) + \row(\Cay(\cA)).
$$
With the Gr\"obner fan structure, the tropical linear space $\cT(\ker \Cay(\cA))$ is a $4$-dimensional fan with f-vector $(1,15,66,84)$, so in the reconstruction of the Newton polytope, we have to process $84$ maximal cones, compared with $27$ cones from our description in (\ref{eqn:orthants}) or (\ref{eqn:prodLinSpaces}).  For larger examples, computing tropical linear spaces becomes a challenging problem, while our description remains simple.  In both cases, however, the main computational difficulty is the reconstruction of the Newton polytope from the tropical hypersurface.
%\qed
\end{ex}

\subsection{Secondary fan structure and links in tropical resultants}
\label{sec:secondary}
  
 Let $A \in \ZZ^{d \times m}$ be an integer matrix with columns $a_1, a_2, \dots, a_m \in \ZZ^d$.  We will also denote by $A$ the point configuration $\{a_1, a_2, \dots, a_m\}$.  We allow repeated points in $A$, as we consider the points to be labeled by the set $\{1,2,\dots,m\}$, and every column of $A$ gets a distinct label.
 
Following \cite[Section~7.2A]{GKZ}, a {\em subdivision} of  $A$ is defined as a family $\Delta = \{C_i \subseteq A :  i \in I\}$ of subsets of $A$ such that
\begin{enumerate}
 \item $\dim(\conv(C_i)) = \dim(\conv(A))$ for each $i \in I$, 
 \item $\conv(A) = \bigcup_{i\in I} \conv(C_i)$, and
 \item for every $i,j \in I$, the intersection of $\conv(C_i)$ and $\conv(C_j)$ is a face of both, and $C_i \cap \conv(C_j) = C_j \cap \conv(C_i)$.
\end{enumerate}
 This notion is also called a {\em marked subdivision} by some authors, as it depends not only on the polyhedra $\conv(C_i)$ but also on the labeled sets $C_i$.  The elements in $\bigcup_{i\in I} C_i$ are called {\em marked}.
If $F$ is a face of $\conv(C_i)$ for some $C_i \in \Delta$, then the labeled set $C_i \cap F$ is called a {\em cell} of the subdivision.  The sets $C_i$'s are maximal cells. 

For two subdivisions $\Delta$ and $\Delta'$ of $A$, we say that $\Delta$ {\em refines} $\Delta'$ or $\Delta'$ {\em coarsens} $\Delta$ if every $C_i \in \Delta$ is contained in some $C_j' \in \Delta'$.  A subdivision is a {\em triangulation}  if no proper refinement exists, and equivalently, if every maximal cell contains exactly $\dim(\conv(A))+1$ elements.
 
Let $\omega : A \rightarrow \RR$ be an arbitrary real valued function on $A$, called a {\em weight vector}.  We can define a subdivision of $A$ {\em induced by} $\omega$ as follows.  Consider the unbounded polyhedron $P = \conv \{(a,\omega(a))\} + \RR_{\geq 0} \{ e_{d+1} \}$ in $\RR^{d+1}$, and let $\{ F_i : i\in I\}$ be its bounded facets.  Then the induced subdivision is $\{C_i : i\in I\}$ where $C_i = \{a \in A : (a, \omega(a)) \in F_i \}$. 
A subdivision $A$ is {\em regular} or {\em coherent} if it is induced by some weight vector $\omega$.
The partition of the space of weight vectors $\RR^A$ according to induced subdivisions is a fan, called the {\em secondary fan} of $A$.
  
 Following \cite[Section~7.1D]{GKZ}, we can construct the secondary polytope of $A$ as follows.  For a triangulation $T$ of a point configuration $A$, define the {\em GKZ-vector} $\phi_T \in \RR^A$ as
 $$\phi_T(a) := \sum_{\sigma\in T:a \in \sigma}\vol(\sigma)$$
 where the summation is over all maximal cells $\sigma$ of $T$ containing $a$.
 
 \begin{defn}
 The {\em secondary polytope} $\Sigma(A)$ is the convex hull in $\RR^A$ of the vectors $\phi_T$ where $T$ runs over all triangulations of $A$.
 \end{defn}
 
 \begin{thm}\cite[\textsection~7.1,  Theorem~1.7]{GKZ}
 The vertices of $\Sigma(A)$ are precisely the vectors $\phi_T$ for which $T$ is a regular triangulation of $A$. The normal fan of the secondary polytope $\Sigma(A)$ is the secondary fan of $A$.  The normal cone of $\Sigma(A)$ at $\phi_T$ is the closure of the set of all weights $w \in \RR^A$ which induce the triangulation $T$.
 \end{thm}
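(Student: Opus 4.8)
The plan is to reduce all three assertions to a single computation. Fix a weight $\omega \in \RR^A$ and write $d = \dim\conv(A)$, with $\vol$ normalized as in the definition of $\phi_T$ so that a unimodular simplex has volume one. First I would establish the pairing formula: interchanging summation in $\phi_T(a) = \sum_{\sigma \in T,\, a\in\sigma}\vol(\sigma)$ gives
$$\langle\omega,\phi_T\rangle = \sum_{a\in A}\omega(a)\phi_T(a) = \sum_{\sigma\in T}\vol(\sigma)\sum_{a\in\sigma}\omega(a).$$
Since every $a\in A$ is a vertex of a maximal simplex of $T$ (otherwise a stellar subdivision at $a$ would properly refine $T$), there is a well-defined $g_{T,\omega}\colon\conv(A)\to\RR$ that is affine on each maximal simplex of $T$ and equals $\omega$ on $A$; on a $d$-simplex the integral of an affine function is its volume times the average of its vertex values, so the displayed sum is $(d+1)!\int_{\conv(A)}g_{T,\omega}\,d\lambda$. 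In other words, $\langle\omega,\phi_T\rangle$ records the volume under the graph of the $T$-interpolant of $\omega$.

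Next I would prove the lower-envelope inequality. Let $\Delta_\omega$ be the regular subdivision induced by $\omega$, obtained by projecting the lower faces of $P_\omega := \conv\{(a,\omega(a)):a\in A\}\subseteq\RR^{d+1}$, and let $h_\omega(x) := \min\{t : (x,t)\in P_\omega\}$ be its piecewise-linear function. For any triangulation $T$ and any $x$ in a maximal simplex $\sigma$, the point $(x,g_{T,\omega}(x))$ is a convex combination of the lifted vertices of $\sigma$ and hence lies in $P_\omega$, so $h_\omega\le g_{T,\omega}$ pointwise. Integrating and using the pairing formula, $\langle\omega,\phi_T\rangle \ge (d+1)!\int_{\conv(A)}h_\omega\,d\lambda$ for every triangulation $T$, with equality exactly when $g_{T,\omega}\equiv h_\omega$, i.e. exactly when $T$ refines $\Delta_\omega$.

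Then the three claims fall out of the duality between a polytope and its normal fan. If $\omega$ is generic, $\Delta_\omega$ is itself a regular triangulation $T_\omega$ and the only triangulation refining it is $T_\omega$, so $\phi_{T_\omega}$ is the unique minimizer of $\langle\omega,\cdot\rangle$ over $\Sigma(A) = \conv\{\phi_T\}$; hence each $\phi_{T_\omega}$ is a vertex, distinct regular triangulations give distinct vertices, and the secondary cone of $T_\omega$ lies inside the inner normal cone of $\Sigma(A)$ at $\phi_{T_\omega}$. Conversely every vertex of $\Sigma(A)$ is the unique minimizer for some generic $\omega$, hence is one of these $\phi_{T_\omega}$, so the vertices are precisely the $\phi_T$ for $T$ regular. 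Since the maximal secondary cones and the full-dimensional inner normal cones of $\Sigma(A)$ are now matched up by inclusion and each family tiles $\RR^A$, the two fans coincide, and taking closures of faces identifies the normal cone at $\phi_T$ with the closure of the set of weights inducing $T$.

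I expect the main obstacle to be the equality case of the lower-envelope inequality: one has to argue that $g_{T,\omega}\ge h_\omega$ together with $\int(g_{T,\omega}-h_\omega)\,d\lambda = 0$ force $g_{T,\omega}\equiv h_\omega$, and then convert this analytic statement back into the combinatorial fact that $T$ refines $\Delta_\omega$. A secondary point requiring care — so that $g_{T,\omega}$ is well defined and interpolates $\omega$ at every point — is the bookkeeping that in any triangulation every point of $A$ is a vertex of some maximal simplex (and, in the labelled setting with repeated points, that the corresponding cells still behave as expected). Everything else is the routine correspondence between the vertices of a polytope and its inner normal fan.
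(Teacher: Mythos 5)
This theorem is not proved in the paper at all --- it is quoted verbatim from GKZ (\S 7.1, Theorem 1.7) --- so there is no internal proof to compare against. Your sketch is essentially the classical GKZ argument (the same one reproduced in the triangulations literature): the pairing $\langle\omega,\phi_T\rangle=(d+1)!\int_{\conv(A)}g_{T,\omega}\,d\lambda$, the lower-envelope bound $h_\omega\le g_{T,\omega}$ with equality exactly for the triangulations refining $\Delta_\omega$, and then the routine vertex/normal-cone duality plus the tiling argument for the fans. That outline is correct and is the right way to get all three assertions at once; the equality case you flag is indeed where the real work sits, and your plan for it (each used vertex must lift onto the lower envelope, hence each cell of $T$ sits inside a cell of $\Delta_\omega$) is the standard and correct one.

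One auxiliary claim, however, is false in the setting of this paper, though harmlessly so. Here subdivisions are labeled (marked), refinement means that every cell of the finer subdivision is contained, \emph{as a set of labels}, in a cell of the coarser one, and repeated points are explicitly allowed. Under these conventions it is not true that every $a\in A$ is a vertex of a maximal simplex of every triangulation: a point interior to $\conv(A)$, or one copy of a repeated point, can simply be unused, and the stellar subdivision at such a point is not a refinement in the labeled sense (its cells contain the new label and so are not subsets of the old cells), so it does not contradict maximality. The fix is that you never needed the claim: $\phi_T(a)=0$ for unused $a$, so your interchange of summation still yields $\langle\omega,\phi_T\rangle=(d+1)!\int_{\conv(A)}g_{T,\omega}\,d\lambda$ with $g_{T,\omega}$ interpolating $\omega$ only at the points used by $T$ (well defined across shared faces by condition (3) in the definition of a subdivision), the lifts of the used points still lie in $P_\omega$ so $h_\omega\le g_{T,\omega}$, and the equality criterion still reads ``$T$ refines $\Delta_\omega$'' in the marked sense, which is exactly what the closing duality and tiling argument require.
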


The {\em link} of a cone $\sigma \subseteq \RR^m$ at a point $v \in \sigma$ is
$$
\link_v(\sigma) = \{u \in \RR^m \,|\, \exists \delta > 0 :  \forall \varepsilon \text{ between } 0 \text{ and } \delta : v + \varepsilon u \in \sigma \}.
$$
The {\em link} of a fan $\cF$ at a point $v$ in the support of $\cF$ is the fan
$$
\link_v(\cF) = \{ \link_v(\sigma) \,|\, v \in \sigma \in \cF \}.
$$
For any cone $\tau \in \cF$, any two points in the relative interior of $\tau$ give the same link of the fan, denoted $\link_\tau(\cF)$. If a maximal cone $\tau\in\cF$ has an assigned multiplicity, then we let $\link_v(\tau)\in\link_v(\cF)$ inherit it.

We will first show that the link of the secondary fan at a point is a common refinement of secondary fans, or, more precisely, that a face of a secondary polytope is a Minkowski sum of secondary polytopes.
For a sub-configuration $C \subseteq A$, we can consider the secondary polytope of $C$ as embedded in $\RR^A$ by setting $\phi_T(a) = 0$ for $a \in A \backslash C$ for every triangulation $T$ of $C$.  On the other hand, the secondary fan of $C$ embeds in $\RR^A$ with lineality space containing the coordinate directions corresponding to $a \in A \backslash C$.

\begin{lem}
\label{lem:secondarylink}
Let $A$ be a point configuration, $\omega \in \RR^A$, and $\Delta_\omega$  be  the regular subdivision of $A$ induced by $\omega$.  
Then the face $F_\omega$ of the secondary polytope of $A$ supported by $\omega$ is the Minkowski sum of secondary polytopes of maximal cells in $\Delta_\omega$.
\end{lem}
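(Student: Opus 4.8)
The plan is to compare the vertices of the two polytopes directly, using the GKZ-vector description of secondary polytopes. Write $\Delta_\omega$ with maximal cells $C$, and recall from the theorem of GKZ quoted above that the normal cone of $\Sigma(A)$ at a vertex $\phi_T$ is the closure of the secondary cone of the regular triangulation $T$. I would first check that $\omega$ lies in this closure precisely when $T$ refines $\Delta_\omega$: a small perturbation of $\omega$ in the direction of a weight inducing $T$ then induces $T$, and conversely any weight near $\omega$ induces a refinement of $\Delta_\omega$. Hence $F_\omega$ is the convex hull of the vectors $\phi_T$ as $T$ runs over the regular triangulations of $A$ that refine $\Delta_\omega$.

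The second step is the additivity of the GKZ vector over the cells of $\Delta_\omega$. If $T$ refines $\Delta_\omega$, then each maximal simplex of $T$ is contained in a unique maximal cell $C$ of $\Delta_\omega$ (distinct maximal cells meet only along proper faces), so $T$ is the disjoint union of triangulations $T_C := \{\sigma \in T : \sigma \subseteq C\}$. Grouping the defining sum $\phi_T(a) = \sum_{\sigma \in T,\ a \in \sigma}\vol(\sigma)$ according to the cell containing $\sigma$ gives $\phi_T = \sum_C \phi_{T_C}$, where $\phi_{T_C}$ is the GKZ vector of $T_C$ as a triangulation of the configuration $C$, embedded in $\RR^A$ with zeros outside $C$. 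Since each $\phi_{T_C}$ is a vertex of $\Sigma(C)$, the first step already yields the inclusion $F_\omega \subseteq \sum_C \Sigma(C)$.

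For the reverse inclusion, since $F_\omega$ is convex it is enough to show that every vertex $v$ of $\sum_C \Sigma(C)$ lies in $F_\omega$. I would pick a weight $\psi \in \RR^A$ in the relative interior of the normal cone of $\sum_C \Sigma(C)$ at $v$, perturbed within that cone if necessary so that $\psi|_C$ induces a regular triangulation $T_C$ of each $C$; then $v = \sum_C \phi_{T_C}$, because the minimizer of a linear functional over a Minkowski sum is the sum of the minimizers. Since all the $T_C$ are restrictions of one function $\psi$, their union $T := \bigsqcup_C T_C$ is a triangulation of $A$ refining $\Delta_\omega$, and it is regular: for $\epsilon > 0$ small, $\omega + \epsilon\psi$ induces a refinement of $\Delta_\omega$ whose trace on each cell $C$ is the subdivision of $C$ induced by $\psi|_C$, i.e.\ $T_C$, so $\omega + \epsilon\psi$ induces $T$. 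By the additivity step $\phi_T = \sum_C \phi_{T_C} = v$, and by the first step $\phi_T \in F_\omega$; hence $v \in F_\omega$ and $\sum_C \Sigma(C) \subseteq F_\omega$. Together with the previous inclusion this proves $F_\omega = \sum_C \Sigma(C)$.

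The one ingredient that is not bookkeeping is the perturbation fact invoked twice above: for $\epsilon$ small, $\Delta_{\omega + \epsilon\psi}$ refines $\Delta_\omega$ and its restriction to each maximal cell $C$ of $\Delta_\omega$ equals the regular subdivision of $C$ induced by $\psi|_C$. This is a standard property of regular (coherent) subdivisions, which I would prove using the description of $\Delta_\omega$ via the lower facets of the lifted configuration $\{(a,\omega(a))\}$, being careful to choose a single $\epsilon$ that works for all of the finitely many cells $C$ at once. I would also make sure the sign and normal-fan conventions are pinned down — which face is ``supported by $\omega$'', and lower versus upper facets — so that $F_\omega$ is indeed the face whose normal cone contains $\omega$; with those conventions fixed, the argument above is complete.
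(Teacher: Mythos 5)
Your proof is correct and follows essentially the same route as the paper: both rest on the GKZ-vector description of vertices of secondary polytopes, the additivity $\phi_T=\sum_C\phi_{T_C}$ over the maximal cells of $\Delta_\omega$, and the standard perturbation fact that $\Delta_{\omega+\varepsilon\psi}$ refines $\Delta_\omega$ with trace $\Delta_{\psi|_C}$ on each cell. The only difference is organizational: the paper compares the vertices of the two polytopes in each generic direction in one stroke, while you prove the two inclusions separately; note also that for your first inclusion you only need $\phi_{T_C}\in\Sigma(C)$, not that it is a vertex.
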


\begin{proof}
Let $\omega' \in \RR^A$ be a generic weight vector and $p$ be the vertex of the Minkowski sum picked out by $\omega'$.  For all sufficiently small $\varepsilon > 0$, the triangulation $\Delta_{\omega + \varepsilon \omega'}$ refines the subdivision $\Delta_{\omega}$.  Let $p_i$ be the GKZ-vector of the triangulation of the $i$-th maximal cell induced by (the restriction of) the vector $\omega+\varepsilon\omega'$, which is the same as the triangulation induced by $\omega'$ because $\omega$ induces the trivial subdivision on each cell of $\Delta_\omega$.  Then the GKZ-vector of $\Delta_{\omega + \varepsilon\omega'}$ is the sum $\sum_i p_i$.  Hence the vertex of $F_\omega$ in direction $\omega'$ is $\sum_i p_i$. We can then conclude that the two polytopes are the same since they have the same vertex in each generic direction.
\end{proof}

We now define \emph{mixed subdivisions} as in \cite{TriangulationsBook}.
For point configurations $A_1, A_2, \dots, A_k$ in $\RR^n$, with $A_i = \{a_{i,j} : 1 \leq j \leq m_i \}$, the {\em Minkowski sum}
$$
\sum_{i=1}^k A_i = \{ a_{1,j_1} + a_{2,j_2} + \cdots + a_{k, j_k} :  1 \leq j_i  \leq m_i\ \} 
$$
 is a configuration of $m_1 m_2 \cdots m_k$ points labeled by $[m_1] \times [m_2] \times \cdots \times [m_k]$.  

\begin{defn}
A subset of labels is a {\em mixed cell} if it is a product of labels $J_1 \times J_2 \times \cdots \times J_k$ where $J_i$ is a nonempty subset of $[m_i]$, and it is {\em fully mixed} if in addition $J_i$ contains at least two elements for every $i = 1,2,\dots,k$.  A subdivision of the Minkowski sum $\sum_{i=1}^k A_i$ is {\em mixed} if every maximal cell is labeled by a mixed cell.
\end{defn}

A mixed subdivision of $\sum_{i=1}^k A_i$ is also referred to as a mixed subdivision of the tuple $\cA = (A_1, A_2, \dots, A_k)$.
Our definition of {\em fully mixed cell} differs from that of \cite[Section~6]{DFS}  where it is required that  $\conv(a_{i,j} : j \in J_i)$ has affine dimension at least one, while we only require that $J_i$ contains at least two elements.  These two definitions coincide if none of the $J_i$'s contains repeated points.

A mixed subdivision is called {\em regular} if it is induced by a weight vector 
$$w : \sum_{i=1}^k A_i \rightarrow \RR, \mbox{ where }  w : \sum_{i=1}^k a_{i,j_i} \mapsto \sum_{i=1}^k w_{i,j_i}$$
for some $(w_1, w_2, \dots, w_k) \in \RR^{m_1}\times\RR^{m_2}\times \cdots \times \RR^{m_k}$.
In \cite{Sturmfels94} a regular mixed subdivision (RMS) is also called a \emph{coherent mixed decomposition}.
%A RMS is called {\em tight} (TRMS) if for every facet labelled by $J_1 \times J_2 \times \cdots \times J_k$, we have 
%$$\sum_{i=1}^k \dim(\conv(a_{i,j} : j \in J_i)) = \dim(\conv(\sum_{i=1}^k A_i )).$$

\begin{thm} \cite[Theorem~5.1]{Sturmfels94}
For a subdivision  $\Delta$ of $\Cay(\cA)$, the collection of mixed cells of the form 
$\sum_{i=1}^k C_i$ such that  $C_i \subseteq A_i$ and $\bigcup_{i=1}^k C_i$ is a maximal cell of $\Delta$ forms a mixed subdivision of $\sum_{i=1}^k A_i$.  
This gives a one-to-one correspondence between the regular subdivisions of $\Cay(\cA)$ and RMSs of $\sum_{i=1}^k A_i$.  Moreover the partition of weight vectors $(w_1, w_2, \dots, w_k) \in \RR^{m_1}\times\RR^{m_2}\times \cdots \times \RR^{m_k}$ according to the induced RMS coincides with the secondary fan of $\Cay(\cA)$.
\end{thm}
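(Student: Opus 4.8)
The plan is to prove all three assertions at once via the \emph{Cayley trick}: realize a (regular) mixed subdivision of $\sum_{i=1}^k A_i$ as a cross-section of a (regular) subdivision of $\Cay(\cA)$, and carry weight vectors along. Write a point of $\RR^k\times\RR^n$ as $(\lambda,y)$. Projecting $\conv(\Cay(\cA))$ to the first $k$ coordinates gives the simplex $\Delta_{k-1}=\conv(e_1,\dots,e_k)$, and the fiber over $\lambda$ is $\{\lambda\}\times\sum_{i=1}^k\lambda_i\conv(A_i)$; in particular the diagonal affine hyperplane $H=\{\lambda_1=\dots=\lambda_k=\tfrac1k\}$ meets $\conv(\Cay(\cA))$ in $\{(\tfrac1k,\dots,\tfrac1k)\}\times\tfrac1k\conv(\sum_{i=1}^k A_i)$, a scaled copy of the Minkowski sum. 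From this fibration one reads off $\dim\conv(\Cay(\cA))=(k-1)+\dim\sum_i\conv(A_i)$, and the rest of the proof consists of intersecting faces and cells of $\Cay(\cA)$ with $H$.

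A face of $\conv(\Cay(\cA))$ cut out by a functional $(\phi,\psi)\in\RR^k\times\RR^n$ equals $\conv(\bigcup_{i\in S}F_i)$, where $F_i\subseteq A_i$ is the cell minimizing $\psi$ and $S\subseteq\{1,\dots,k\}$ is the set of blocks attaining the global minimum; this face meets the relative interior of $\Delta_{k-1}$, hence meets $H$, exactly when $S=\{1,\dots,k\}$, and then its $H$-slice is $\{(\tfrac1k,\dots,\tfrac1k)\}\times\tfrac1k\sum_i\conv(F_i)$, which on labels sends $\bigcup_iC_i$ (with $C_i\subseteq A_i$) to the product $\prod_iC_i\subseteq[m_1]\times\cdots\times[m_k]$, a mixed cell. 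A maximal cell of a subdivision $\Delta$ of $\Cay(\cA)$ is full-dimensional, so it cannot lie in a coordinate hyperplane $\{\lambda_i=0\}$ and therefore uses all $k$ colors; by the dimension formula its $H$-slice has dimension $\dim\sum_i\conv(A_i)$, i.e.\ it is a maximal mixed cell. Hence slicing all maximal cells of $\Delta$ by $H$ yields a mixed subdivision $\Delta'$ of $\sum_iA_i$: the covering and full-dimensionality axioms pass through because intersecting with the affine subspace $H$ commutes with unions and drops the dimension of every cell meeting $H$ by exactly $k-1$, while the face-to-face and marking axioms pass through using that a face of $\conv(\bigcup_iC_i)$ restricts to a face of its $H$-slice, together with the fact that faces of a Minkowski sum decompose uniquely as Minkowski sums of faces of the summands.

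For the bijection it suffices to treat the regular case directly on lifted polytopes. A weight $w=(w_1,\dots,w_k)\in\RR^{m_1}\times\cdots\times\RR^{m_k}=\RR^{m}$ on the points of $\Cay(\cA)$ defines the regular subdivision $\Delta_w$ via the bounded facets of $\conv\{((e_i,a_{ij}),w_{ij})\}+\RR_{\geq 0}e_{k+n+1}$. Over the diagonal fiber the linear program computing this lower hull --- minimize $\sum_{ij}\mu_{ij}w_{ij}$ subject to $\sum_j\mu_{ij}=\tfrac1k$ for each $i$ and $\sum_{ij}\mu_{ij}a_{ij}=p$ --- separates into $k$ independent block programs, so its value function is exactly the lower hull of $\conv\{(\sum_ia_{i,j_i},\,\sum_iw_{i,j_i})\}$. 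Thus the $H$-slice of $\Delta_w$ is precisely the RMS of $\sum_iA_i$ induced by the special weight $\sum_ia_{i,j_i}\mapsto\sum_iw_{i,j_i}$. Since every RMS arises from such a $w$ by definition, and since slicing is injective (one recovers $\bigcup_iC_i$ from $\prod_iC_i$ by projecting to each factor) and preserves refinement, the map $\Delta\mapsto\Delta'$ restricts to a bijection between the regular subdivisions of $\Cay(\cA)$ and the RMSs of $\sum_iA_i$. Finally, two weights $w,w'\in\RR^m$ induce the same subdivision of $\Cay(\cA)$ if and only if (by this bijection) they induce the same RMS; hence the chamber decomposition of $\RR^m$ by ``induces a given RMS'' coincides with the partition of $\RR^m$ by induced subdivisions of $\Cay(\cA)$, which is the secondary fan of $\Cay(\cA)$.

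The main obstacle is the bookkeeping of the second paragraph: verifying $\dim\conv(\Cay(\cA))=(k-1)+\dim\sum_i\conv(A_i)$, that maximal cells must use all $k$ colors, that slicing by the codimension-$(k-1)$ subspace $H$ is ``transverse'' so that full-dimensional cells slice to full-dimensional cells, and --- the most delicate point --- that the \emph{labeled} marking axiom survives slicing when the $A_i$ contain repeated points, which is where one needs uniqueness of the Minkowski decomposition of faces. Once the cross-section is established as a refinement-preserving map from subdivisions of $\Cay(\cA)$ to mixed subdivisions of $\sum_iA_i$, bijectivity in the regular case follows immediately from the lifted-polytope comparison, which in turn rests only on the elementary decoupling of the lifting linear program over the diagonal fiber, and the secondary-fan identification is then formal.
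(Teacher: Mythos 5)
The paper does not prove this statement at all---it is quoted verbatim from \cite[Theorem~5.1]{Sturmfels94} as a known result---so there is no in-paper proof to compare against; your argument is the standard Cayley-trick proof (slicing $\conv(\Cay(\cA))$ with the diagonal section $H$ and matching lower hulls of lifted configurations), which is exactly how the cited source and \cite[Section~9.2]{TriangulationsBook} establish it, and it is essentially sound, including the careful treatment of maximal cells using all $k$ colors and of labels when the $A_i$ have repeated points. One phrase deserves tightening: the linear program over the diagonal fiber does \emph{not} literally ``separate into $k$ independent block programs,'' since the moment constraint $\sum_{i,j}\mu_{ij}a_{ij}=p$ couples the blocks; the correct (and easy) argument is the marginal/product-measure correspondence---from a feasible $\rho$ on $[m_1]\times\cdots\times[m_k]$ take block marginals $\mu_{ij}=\tfrac1k\sum_{J:J_i=j}\rho_J$, and from feasible block weights $\nu_{ij}=k\mu_{ij}$ take the product distribution $\rho_J=\prod_i\nu_{i,J_i}$---which yields precisely the identity of value functions you assert, so the rest of your paragraph (slice of $\Delta_w$ equals the RMS induced by the additive weight, injectivity of slicing, and the secondary-fan identification) goes through unchanged.
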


From our description of tropical resultants, we get the following result which was proven for the resultant hypersurfaces in \cite[Theorem~5.2]{Sturmfels94} and stated for the {\em essential} configurations with no repeated points in \cite[Proposition~6.8]{DFS}.  See Remark~\ref{rem:matroid} for a definition of {\em essential}.
\begin{thm}
\label{thm:dual}
The tropical resultant is a subfan of the secondary fan of the Cayley configuration $\Cay(A_1,A_2,\dots,A_k)$, consisting of the cones whose corresponding mixed subdivision contains a fully mixed cell. 
\end{thm}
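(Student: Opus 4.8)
The plan is to translate membership in $\cT\cR(\cA)$ into a condition on the regular mixed subdivision (RMS) induced by the coefficient vector, transport this condition to the secondary fan of $\Cay(\cA)$ via Sturmfels' theorem stated above, and then verify that the resulting family of cones is closed under passing to faces, hence a subfan.

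First I would record the following selection principle for cells of an RMS. Fix $\omega = (F_1,\dots,F_k) \in \RR^m$ and let $\Delta_\omega$ be the RMS of $\sum_{i=1}^k A_i$ induced by $w : \sum_i a_{i,j_i} \mapsto \sum_i \omega_{i,j_i}$. For $X \in \RR^n$ set $J_i(X) = \{ j : \omega_{i,j} + a_{i,j}\cdot X = \min_\ell(\omega_{i,\ell} + a_{i,\ell}\cdot X) \}$, the set of terms of $F_i$ attaining the tropical minimum at $X$. Since the quantity $\sum_{i=1}^k (\omega_{i,j_i} + a_{i,j_i}\cdot X)$ that one minimizes over tuples $(j_1,\dots,j_k)$ is a sum of one function of $j_i$ for each $i$, its set of minimizers is exactly the box $J_1(X) \times \cdots \times J_k(X)$. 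As the height of the lifted point $\sum_i a_{i,j_i}$ is $\sum_i \omega_{i,j_i}$, this says precisely that the cell of $\Delta_\omega$ selected by the direction $X$ (the face of the lower hull of the lifted points exposed by $X$) is the mixed cell $J_1(X) \times \cdots \times J_k(X)$; and as $X$ varies over $\RR^n$ one obtains in this way every cell of $\Delta_\omega$, each selected by the directions in the relative interior of its normal cone.

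Next I would combine this with the definition of the tropical resultant. We have $\omega \in \cT\cR(\cA)$ iff some $X \in \RR^n$ lies in $\cT(F_i)$ for every $i$, i.e.\ $|J_i(X)| \geq 2$ for all $i$; by the selection principle this holds iff $\Delta_\omega$ has a fully mixed cell. A fully mixed cell $\prod_i J_i$ lies in a maximal cell $\prod_i J_i'$ of $\Delta_\omega$ with $J_i \subseteq J_i'$, which is then itself fully mixed, and maximal cells are cells, so we obtain
$$\omega \in \cT\cR(\cA) \iff \Delta_\omega \text{ has a fully mixed maximal cell.}$$
Now Sturmfels' theorem identifies the secondary fan of $\Cay(\cA)$ with the fan whose cones are the sets of weight vectors inducing a fixed RMS, and makes the correspondence between regular subdivisions of $\Cay(\cA)$ and RMSs of $\sum_i A_i$ compatible with refinement. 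Let $\mathcal{C}$ be the set of secondary cones whose relative-interior weights induce an RMS with a fully mixed maximal cell. If $C'$ is a face of $C \in \mathcal{C}$, then $\Delta_{C'}$ is coarser than $\Delta_C$, so each maximal cell of $\Delta_C$ is contained in a maximal cell of $\Delta_{C'}$ with componentwise larger label sets; applied to a fully mixed maximal cell of $\Delta_C$ this forces one in $\Delta_{C'}$, so $C' \in \mathcal{C}$. Thus $\mathcal{C}$ is closed under taking faces, i.e.\ a subfan of the secondary fan of $\Cay(\cA)$, and its support equals $\cT\cR(\cA)$ by the displayed equivalence: each $\omega \in \cT\cR(\cA)$ lies in the relative interior of a secondary cone lying in $\mathcal{C}$, and every point of a cone in $\mathcal{C}$ lies in the relative interior of a face of it, which is again in $\mathcal{C}$ and hence in $\cT\cR(\cA)$.

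The hard part will be the bookkeeping in the selection principle — making rigorous that the box of minimizers is genuinely the labelled cell of the regular subdivision selected by $X$, taking care with repeated points in the $A_i$ — together with the two places where one must move only in the enlarging direction: a fully mixed cell need not be maximal but is always contained in a fully mixed maximal one, and a finer RMS may split a fully mixed maximal cell into cells that are merely mixed, so the face/coarsening compatibility in the last step has to be applied precisely in the direction that preserves full mixedness. Given these, the subfan statement follows formally from Sturmfels' theorem.
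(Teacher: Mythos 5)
Your proposal is correct and follows essentially the same route as the paper: the key step in both is the duality identifying a point $X$ lying in every tropical solution set $\cT(F_i)$ with a fully mixed cell of the regular mixed subdivision induced by the coefficient vector (your "selection principle" is exactly the paper's duality statement, made explicit via the separable minimization), combined with Sturmfels' identification of secondary cones of $\Cay(\cA)$ with RMSs. You additionally spell out the face-closure argument that makes the collection of such secondary cones a subfan, which the paper leaves implicit; that extra care (coarsening preserves full mixedness, with repeated labels handled correctly) is sound.
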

The multiplicities of secondary cones in the tropical resultant will be computed in Proposition~\ref{prop:secondaryMult} below.

\begin{proof}
For a tropical polynomial $F \in \RR^A$ the tropical solution set $\cT(F)$ is dual to the cells with at least two elements in the subdivision of $A$ induced by the coefficients of $F$.  More precisely, by the definition of tropical solution sets, $w \in \cT(F)$ if and only if $(1,w)$ is an inner-normal vector for the convex hull of lifted points $\{(c, a) \in \RR^{n+1} : c \odot x^a \text{ is a term in } F \}$ supporting at least two points of $A$.  The two points supported need not have distinct coordinates $a$.

Let $(F_1,F_2,\dots,F_k) \in \RR^{A_1}\times\RR^{A_2}\times \cdots \times \RR^{A_k}$.
The union of tropical solution sets $\bigcup_{i=1}^k \cT(F_i)$ inherits a polyhedral complex structure from the common refinement of the completions of $\cT(F_i)$ to $\RR^m$, which is dual to the RMS of $\cA$ induced by the coefficients of $(F_1,F_2,\dots,F_k)$. 
The tuple $(F_1,F_2,\dots,F_k)$ is in the tropical resultant if and only if the tropical solution sets have a common intersection, which holds if and only if there is a fully mixed cell in the dual RMS.
\end{proof}

%\begin{figure}
%\includegraphics[scale=0.7]{mixed.eps}
%\caption{A tropical hypersurface arrangement and the dual coherent mixed decomposition}
%\end{figure}

 The tropical resultant is a subfan of the secondary fan. It is pure and connected in codimension one, so we can compute it by traversing, as in \cite{BJSST}.  To traverse the resultant fan, we need to know how to find the link of the fan at a cone. 

\begin{pro}
\label{prop:linkdecomposition}
Let $\cA = (A_1, A_2, \dots, A_k)$.  The support of the link at a point $\omega$ of the tropical resultant $\cT\cR(\cA)$ is a union of tropical resultants corresponding to sub-configurations of fully mixed cells in the RMS $\Delta_\omega$ of $\cA$ induced by $\omega$.
\end{pro}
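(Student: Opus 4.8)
The plan is to reduce to the secondary-fan description. By Theorem~\ref{thm:dual}, $\cT\cR(\cA)$ is the subfan of the secondary fan of $\Cay(\cA)$ whose cones have an RMS containing a fully mixed cell, and by \cite[Theorem~5.1]{Sturmfels94} the maximal cells of the subdivision $\Delta_\omega$ of $\Cay(\cA)$ are exactly the Cayley configurations $\Cay(\cA^{(\ell)})$, $\ell = 1,\dots,r$, of the maximal mixed cells $\sum_i C_i^{(\ell)}$ of the RMS $\Delta_\omega$, where I write $\cA^{(\ell)} = (C_1^{(\ell)},\dots,C_k^{(\ell)})$. My first step would be to describe the link of the ambient secondary fan: applying Lemma~\ref{lem:secondarylink} to the configuration $\Cay(\cA)$, the face of $\Sigma(\Cay(\cA))$ supported by $\omega$ is the Minkowski sum $\sum_\ell \Sigma(\Cay(\cA^{(\ell)}))$, so its normal fan --- which is $\link_\omega$ of the secondary fan of $\Cay(\cA)$ --- is the common refinement of the secondary fans of the $\Cay(\cA^{(\ell)})$, each embedded in $\RR^m$ with lineality in the coordinate directions indexed by the points lying outside that cell. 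Concretely, the cones of this link are the intersections $\bigcap_\ell \sigma^{(\ell)}$, one cone $\sigma^{(\ell)}$ from each secondary fan whose relative interiors $\operatorname{relint}(\sigma^{(\ell)})$ have a common point; moreover such an intersection equals $\link_\omega(\sigma)$ for a secondary cone $\sigma$ of $\Cay(\cA)$ with $\omega\in\sigma$ whose subdivision restricts on the $\ell$-th maximal cell of $\Delta_\omega$ precisely to the RMS corresponding to $\sigma^{(\ell)}$ (again by \cite[Theorem~5.1]{Sturmfels94}).

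The second step would be to pick out, among these cones, the ones coming from $\cT\cR(\cA)$. By Theorem~\ref{thm:dual}, $\sigma\in\cT\cR(\cA)$ iff the RMS of $\sigma$ has a fully mixed cell $\sum_i J_i$; since this RMS refines $\Delta_\omega$, the cell $\sum_i J_i$ lies inside a maximal mixed cell $\sum_i C_i^{(\ell_0)}$ of $\Delta_\omega$, and projecting the inclusion $\prod_i J_i\subseteq\prod_i C_i^{(\ell_0)}$ onto each factor gives $J_i\subseteq C_i^{(\ell_0)}$, so this maximal cell is itself fully mixed and $\sum_i J_i$ is a fully mixed cell of the restricted RMS $\sigma^{(\ell_0)}$; the converse is immediate. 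Hence, for $\sigma$ with $\omega\in\sigma$, $\sigma\in\cT\cR(\cA)$ iff $\sigma^{(\ell_0)}\in\cT\cR(\cA^{(\ell_0)})$ for some $\ell_0$ with $\sum_i C_i^{(\ell_0)}$ fully mixed. This also shows that only fully mixed maximal cells of $\Delta_\omega$ are relevant: a non-maximal fully mixed cell $\sum_i J_i$ always lies inside a fully mixed maximal cell $\sum_i C_i^{(\ell_0)}$, and giving the extra terms large coefficients embeds $\cT\cR(J_1,\dots,J_k)$ into $\cT\cR(\cA^{(\ell_0)})$, so including it adds nothing to the union.

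The third step is to assemble the union. Combining the first two steps, the support of $\link_\omega(\cT\cR(\cA))$ is the union of the cones $\bigcap_\ell \sigma^{(\ell)}$ for which $\sigma^{(\ell_0)}\in\cT\cR(\cA^{(\ell_0)})$ for some fully mixed $\ell_0$. One inclusion is immediate from $\bigcap_\ell\sigma^{(\ell)}\subseteq\sigma^{(\ell_0)}$, giving that this support is contained in the union of the (embedded) supports of $\cT\cR(\cA^{(\ell)})$ over the fully mixed $\ell$. For the reverse, given a point $v$ in the embedded support of $\cT\cR(\cA^{(\ell_0)})$ for a fully mixed $\ell_0$, I let $\sigma^{(\ell)}$ be the cone of the secondary fan of $\Cay(\cA^{(\ell)})$ whose relative interior contains $v$, for each $\ell$; since a subfan is closed under faces, $\sigma^{(\ell_0)}$ lies in $\cT\cR(\cA^{(\ell_0)})$, and since $v\in\bigcap_\ell\operatorname{relint}(\sigma^{(\ell)})$ the intersection $\bigcap_\ell\sigma^{(\ell)}$ is a cone of the common refinement, hence equals $\link_\omega(\sigma)$ for a secondary cone $\sigma$ of $\Cay(\cA)$ with $\omega\in\sigma$ and with $\sigma^{(\ell_0)}$ as its restriction to the $\ell_0$-th cell; by the second step $\sigma\in\cT\cR(\cA)$, so $v\in\bigcap_\ell\sigma^{(\ell)}=\link_\omega(\sigma)$ lies in the support of $\link_\omega(\cT\cR(\cA))$.

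I expect the main obstacle to be the second step: establishing cleanly that a fully mixed cell of an arbitrary refinement of $\Delta_\omega$ is forced to sit inside a single fully mixed maximal cell of $\Delta_\omega$ (so the indexing is exactly by the fully mixed cells), and carefully tracking the correspondence between regular subdivisions of $\Cay(\cA)$ refining $\Delta_\omega$, their restrictions to the maximal cells, and regular mixed subdivisions of the associated sub-tuples via the Cayley trick. The remaining parts are routine manipulations of fan supports; since only the support is asserted, no multiplicities or global fan structure need to be tracked.
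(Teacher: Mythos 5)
Your main line of argument is correct and proves the statement, but it is routed through heavier machinery than the paper uses. The paper's proof is a direct two-sentence duality argument: $u$ lies in the link if and only if $\omega+\varepsilon u$ induces an RMS with a fully mixed cell for all small $\varepsilon>0$, and since $\Delta_{\omega+\varepsilon u}$ refines $\Delta_\omega$ and restricts on each cell of $\Delta_\omega$ to the subdivision induced by the restriction of $u$, this happens if and only if $u$ subdivides some fully mixed cell of $\Delta_\omega$ into an RMS with a fully mixed cell, i.e.\ lies in the corresponding tropical resultant. Your steps 1--3 encode exactly this observation, but packaged through Lemma~\ref{lem:secondarylink} (the face of the secondary polytope supported by $\omega$ is a Minkowski sum of secondary polytopes of the cells, so the link of the secondary fan is the common refinement of the cells' secondary fans) together with Theorem~\ref{thm:dual}; your step 2 --- a fully mixed cell of a refining RMS sits, by label containment, inside a fully mixed maximal cell of $\Delta_\omega$ and is a fully mixed cell of the restriction --- is the same key combinatorial fact the paper's proof uses implicitly. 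What the extra formalism buys is an explicit description of the fan structure of the link (cones indexed by compatible tuples of secondary cones of the cells), which the paper only exploits later in the traversal algorithms; what it costs is having to verify carefully the correspondence between cones of the common refinement and secondary cones of $\Cay(\cA)$ containing $\omega$, which you treat somewhat informally but which does follow from the proof of Lemma~\ref{lem:secondarylink}.

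One side remark does need repair: your dismissal of non-maximal fully mixed cells. The tropical resultant of a subconfiguration $(J_1,\dots,J_k)$ is embedded in $\RR^m$ with the coordinates of the omitted points \emph{free} (they lie in the lineality space), not large, so ``giving the extra terms large coefficients'' does not show that the embedded $\cT\cR(J_1,\dots,J_k)$ is contained in the embedded $\cT\cR(\cA^{(\ell_0)})$; in fact that containment is false in general, since extra terms with very negative coefficients can destroy solvability of the larger system. The correct reason such cells are harmless is the $\omega+\varepsilon u$ argument itself: if $\sum_i J_i$ is a (possibly non-maximal) fully mixed cell of $\Delta_\omega$ and the restriction of $u$ lies in $\cT\cR(J_1,\dots,J_k)$, choose $w\in\RR^n$ dual to that cell in $\Delta_\omega$ (so that for each $i$ the minimum of $\omega_{ia}+w\cdot a$ over $A_i$ is attained exactly on $J_i$) and $x$ a common tropical solution for the coefficients $u$ restricted to the $J_i$; then at $w+\varepsilon x$ every point of $A_i\setminus J_i$ is lifted strictly higher by $\omega$, so for small $\varepsilon>0$ the minimum for the coefficients $\omega+\varepsilon u$ is attained at least twice inside each $J_i$, whence $\omega+\varepsilon u\in\cT\cR(\cA)$ and $u$ is in the link. (Under the reading in which only maximal fully mixed cells are meant --- which is how the algorithms use the proposition --- the aside is unnecessary and your main argument already suffices.)
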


\begin{proof}
%Since the tropical resultant is a subfan of the secondary fan, the cones in the link of $\sigma$ in the tropical resultant are $\link_\sigma(\tau)$  where $\tau$ is a secondary cone of $\Cay(\cA)$ corresponding to a CMD refining $\Delta_\omega$ and containing a mixed cell.  
By definition, a point $u$ is in the link if and only if $\omega + \varepsilon u$ induces a RMS with a fully mixed cell for all sufficient small $\varepsilon > 0$.  This happens if and only if at least one of the fully mixed cells in $\Delta_\omega$ is subdivided by $u$ into a RMS with a fully mixed cell, i.e.\ $u$ is in the tropical resultant of the sub-configurations of fully mixed cells.
\end{proof}

\begin{ex}
\label{ex:3triLinks}
Let $\cA$ be as in Example~\ref{ex:3tri}.  
\begin{itemize}
\item[(a)] The link at the point $(0,0,0,0,1,5,0,1,5)$ of the tropical resultant is a union of two hyperplanes whose normal vectors are:
$$
(0,-1,1,-1,1,0,1,-1,0) \text{ and } (0,0,0,0,1,-1,0,-1,1)
$$
respectively.  They are the resultant varieties of the sub-configurations of the two fully  mixed cells in Figure~\ref{fig:3trianglesTropCurvesA}.
\item[(b)] The link at the point $(0,0,0,0,-1,-1,0,0,1)$ consists of four rays modulo lineality space.  Figure~\ref{fig:3trianglesTropCurvesB} shows the induced  mixed subdivision, which contains two fully mixed cells.  The resultant of one fully mixed cell consists of three rays (modulo lineality), and the resultant of the other fully mixed cell consists of two rays.  They overlap along a common ray. 
%\qed
\end{itemize}
\begin{figure}
\includegraphics[scale=0.5]{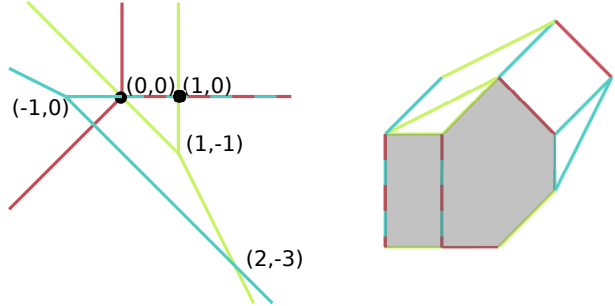}
\caption{The tropical solution sets at $(0,0,0,0,-1,-1,0,0,1)$ and the corresponding dual RMS in Example~\ref{ex:3triLinks}(b).}
\label{fig:3trianglesTropCurvesB}
\end{figure}
\end{ex}

%The tropical resultant consist of points $\omega$ such that the regular mixed subdivision induced by $\omega$ contains a fully mixed cell.  

%We can use this description to construct the tropical resultant as a union of cones.

%\begin{thm}
%The tropical resultant is the set 
%\begin{equation}  
%\label{eqn:orthants}
%\cT\cR(\cA) =  \bigcup_{E_0, E_1, \dots, E_k} \RR_{\geq 0}\{e_{ij} : a_{ij} \notin E_i\} + \row(\Cay(\cA))
%\end{equation}
%where each $E_i$ consist of two elements in $A_i$. 
%\end{thm}

%\begin{proof}
%A point $\omega$ is in the tropical resultant if and only if the coherent mixed decomposition induces has cell of the form $E_0 + E_1 + \cdots E_k$ where each $E_i$ consists of two elements from $A_i$.  This cell needs not be maximal.  
%For any CMD and any cell in it, we can transform the weight vector by an element in the row space of $\Cay(\cA)$ so that elements involved in the chosen cell have height $0$ and the rest have non-negative heights, without changing the CMD.
%\end{proof}

The following lemma follows immediately from the definition of induced or regular subdivisions and shows that the description of the tropical resultant as a union of cones in Theorem~\ref{thm:orthants} is somewhat compatible with the secondary fan structure.  For any tuple $E = (E_1, E_2, \dots, E_k)$ of pairs $E_i \subset A_i$, let $C_E := \RR_{\geq 0}\{e_{ij} : a_{ij} \notin E_i\} + \row(\Cay(\cA))$ be the cone as in Theorem~\ref{thm:orthants}.

\begin{lem}
\label{lem:unionSecondary}
For each tuple $E$ as above, the cone $C_E$ is a union of secondary cones of $\Cay(\cA)$ corresponding to mixed subdivisions of $\sum_{i=1}^k A_i$ having a mixed cell containing $\sum_{i=1}^k E_i$.
\end{lem}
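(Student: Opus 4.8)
The plan is to describe membership in the cone $C_E$ entirely in terms of the regular mixed subdivision (RMS) induced by the coefficient vector, and then to observe that this description is compatible with the secondary fan and exhibits $C_E$ as a union of closed cones. The two ingredients are the coefficient-side reformulation of $C_E$ from Lemma~\ref{lem:correspondence} and the elementary fact that the lift of $\sum_i A_i$ used to define a regular mixed subdivision is a Minkowski sum of the lifts of the factors.

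In detail, fix $\omega = (\omega_1, \dots, \omega_k) \in \RR^m$ with $\omega_i \in \RR^{A_i}$, let $\Delta_i$ be the regular subdivision of $A_i$ induced by $\omega_i$, and for $w \in \RR^n$ let $C_i^{(w)} := \{a_{ij} \in A_i : \omega_{ij} + a_{ij}\cdot w = \min_\ell(\omega_{i\ell} + a_{i\ell}\cdot w)\}$ be the cell of $\Delta_i$ selected by $w$. By Lemma~\ref{lem:correspondence}, $\omega \in C_E$ if and only if there is a single $w \in \RR^n$ with $E_i \subseteq C_i^{(w)}$ for all $i$. The key observation is the identity $\min_{j_1,\dots,j_k}\sum_{i=1}^k(\omega_{ij_i} + a_{ij_i}\cdot w) = \sum_{i=1}^k\min_{j_i}(\omega_{ij_i} + a_{ij_i}\cdot w)$, which says that the lift of $\sum_i A_i$ by the weight $\sum_i a_{ij_i}\mapsto\sum_i\omega_{ij_i}$ has lower hull equal to the Minkowski sum of the lower hulls of the individual lifts. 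Consequently the cell of the induced RMS of $\sum_i A_i$ selected by a direction $w$ is the product $\prod_{i=1}^k C_i^{(w)}$; in particular every maximal cell of the RMS is a mixed cell of the form $\sum_i D_i$ with $D_i$ a cell of $\Delta_i$, and it is selected by some direction $w^*$.

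Now the translation is routine. If $\omega \in C_E$, choose $w$ with $E_i \subseteq C_i^{(w)}$; then $\sum_i E_i \subseteq \prod_i C_i^{(w)}$ is contained in some maximal cell $\sum_i D_i$ of the RMS, and since each $E_i$ is nonempty, projecting the product of labels onto the $i$-th factor yields $E_i \subseteq D_i$, so the RMS has a mixed cell containing $\sum_i E_i$. Conversely, if the RMS has a maximal cell $\sum_i D_i$ with $E_i \subseteq D_i$, let $w^*$ be a direction selecting it; then $\prod_i C_i^{(w^*)} = \sum_i D_i$, whence $C_i^{(w^*)} = D_i \supseteq E_i$ and $w^*$ witnesses $\omega \in C_E$. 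Hence $\omega \in C_E$ precisely when the RMS induced by $\omega$ has a mixed cell containing $\sum_i E_i$.

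Finally, this criterion depends only on the RMS induced by $\omega$, which by \cite[Theorem~5.1]{Sturmfels94} is constant on the relative interior of each cone of the secondary fan of $\Cay(\cA)$. Therefore, if $C_E$ meets the relative interior of a secondary cone $\sigma$ it contains all of $\mathrm{relint}(\sigma)$, and since $C_E$ is a closed polyhedral cone it contains $\overline{\mathrm{relint}(\sigma)} = \sigma$. As every point of $\RR^m$ lies in the relative interior of a unique secondary cone, $C_E$ is exactly the union of the closed secondary cones whose associated RMS has a mixed cell containing $\sum_i E_i$, which is the claim. I expect the only slightly delicate points to be the label bookkeeping for mixed cells --- so that ``a mixed cell containing $\sum_i E_i$'' unambiguously means a product $\sum_i D_i$ with $E_i \subseteq D_i$, which is where nonemptiness of the $E_i$ enters --- and the passage from relative interiors to closed cones, which relies on closedness of $C_E$; consistent with the remark preceding the lemma, neither is a real obstacle, and the one genuinely structural input, the Minkowski-sum compatibility of the Cayley lift, is standard.
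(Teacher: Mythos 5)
Your argument is correct, and it is essentially the paper's own reasoning spelled out: the paper dismisses this lemma as following immediately from the definition of induced (regular) subdivisions together with Lemma~\ref{lem:correspondence}, and your proof fills in exactly that intended content --- the separability of the additive Cayley lift, the resulting identification of the cell selected by $w$ with the mixed cell $\sum_i C_i^{(w)}$, and the constancy of the induced RMS on relative interiors of secondary cones combined with closedness of $C_E$. No gaps; the label bookkeeping and the passage to closed cones are handled as they should be.
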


Let $\sigma$ be a secondary cone of $\Cay(\cA)$ which is a maximal cone in the tropical resultant $\cT\cR(\cA)$, and let $\Delta_\sigma$ be the corresponding regular mixed subdivision.  Then all the fully mixed cells in $\Delta_\sigma$ are of the form  $\sum_{i=1}^k E_i$ where each $E$ is a tuple of pairs as above. Otherwise $\sigma$ is not maximal in $\cT\cR(\cA)$.

\begin{pro}
\label{prop:secondaryMult}
The multiplicity of the tropical resultant $\cT\cR(\cA)$ at a secondary cone $\sigma$ of $\Cay(\cA)$ is the sum of multiplicities of cones $C_E$ (given in Theorem~\ref{thm:orthants})  over all tuples $E$ of pairs forming a mixed cells in $\Delta_\sigma$.
\end{pro}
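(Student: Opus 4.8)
The plan is to reduce the claim to the description of multiplicities at generic points given in the paragraph following Theorem~\ref{thm:orthants}: choose a point $\omega$ in the relative interior of $\sigma$, check that it is a generic point of $\cT\cR(\cA)$ in that sense, and then identify the cones $C_E$ passing through $\omega$ with the fully mixed cells of $\Delta_\sigma$.

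First I would fix $\omega\in\mathrm{relint}(\sigma)$ and determine the set $\{E:\omega\in C_E\}$. By Lemma~\ref{lem:unionSecondary} each $C_E$ is a union of secondary cones of $\Cay(\cA)$. If $\omega\in C_E$, then $\omega$ lies in some secondary cone $\sigma'\subseteq C_E$; since $\sigma\cap\sigma'$ is a face of $\sigma$ containing the relative-interior point $\omega$, it equals $\sigma$, so $\sigma\subseteq\sigma'\subseteq C_E$. Conversely $\sigma\subseteq C_E$ gives $\omega\in C_E$. For every such $E$ we have $\dim\sigma=\dim\cT\cR(\cA)\le\dim C_E\le\dim\cT\cR(\cA)$, using that $\cT\cR(\cA)$ is pure and $\sigma$ is a maximal, hence full-dimensional, cone of it; so $C_E$ is maximal-dimensional and $\spann(C_E)=\spann(\sigma)$. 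As $\sigma$ is full-dimensional inside this common span, $\mathrm{relint}(\sigma)\subseteq\mathrm{relint}(C_E)$, since a supporting hyperplane of $C_E$ through a point of $\mathrm{relint}(\sigma)$ would also support $\sigma$. Thus $\omega$ satisfies all the genericity requirements of the paragraph after Theorem~\ref{thm:orthants}, and the multiplicity of $\cT\cR(\cA)$ at $\sigma$, which by definition is $\mult_\omega(\cT\cR(\cA))$ for $\omega\in\mathrm{relint}(\sigma)$, equals $\sum_{E:\,\sigma\subseteq C_E}\mult(C_E)$.

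It remains to identify $\{E:\sigma\subseteq C_E\}$ with the set of tuples $E$ of pairs for which $\sum_{i=1}^k E_i$ is a mixed cell of $\Delta_\sigma$. By Lemma~\ref{lem:unionSecondary}, $\sigma\subseteq C_E$ if and only if $\Delta_\sigma$ has a mixed cell $\prod_{i=1}^k J_i$ containing $\sum_{i=1}^k E_i$; then $J_i\supseteq E_i$, hence $|J_i|\ge 2$ for all $i$, so that cell is fully mixed. By the maximality observation recorded just before the proposition, every fully mixed cell of $\Delta_\sigma$ has all $|J_i|=2$, so $J_i=E_i$ and $\sum_i E_i$ is itself a fully mixed cell of $\Delta_\sigma$; the converse is immediate. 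Combined with the preceding paragraph this gives the stated formula. I expect the main obstacle to be exactly this last step: matching without slack the cones $C_E$ that contain $\sigma$ to the fully mixed cells of $\Delta_\sigma$, and keeping the term ``mixed cell'' consistent between Lemma~\ref{lem:unionSecondary} and the maximality remark, in particular ruling out that a strictly larger fully mixed cell of $\Delta_\sigma$ could contain a given $\sum_i E_i$. The convex-geometry verifications showing that any relative-interior point of a maximal secondary cone is generic are routine.
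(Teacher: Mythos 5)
Your proof is correct and follows essentially the same route as the paper's: it combines Lemma~\ref{lem:unionSecondary} (to identify the cones $C_E$ containing $\sigma$ with the tuples $E$ forming fully mixed cells of $\Delta_\sigma$) with the generic-point multiplicity recipe stated after Theorem~\ref{thm:orthants}. You simply spell out in more detail the steps the paper leaves implicit, namely the verification that a relative-interior point of the maximal cone $\sigma$ is generic and the use of the maximality remark to pass from ``a mixed cell containing $\sum_i E_i$'' to ``$\sum_i E_i$ is itself a mixed cell.''
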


\begin{proof}
By Lemma~\ref{lem:unionSecondary}, for each tuple $E$ of pairs, the cone $C_E$ contains $\sigma$ if and only if $\sum_{i=1}^k E_i$ is a mixed cell in $\Delta_\sigma$.  Otherwise $C_E$ is disjoint from the interior of $\sigma$. The multiplicity of $\sigma$ is the sum of multiplicities of $C_E$'s containing $\sigma$.
\end{proof}

The edges of the resultant polytope are normal to the maximal cones in the tropical resultant, and Proposition~\ref{prop:secondaryMult} can be used to find the lengths of the edges.  From this description, one can derive Sturmfels' formula \cite[Theorem~2.1]{Sturmfels94} for the vertices of the resultant polytope. 

\subsection{Tropical description of secondary polytopes}
\label{sec:tropSecondary}

We will give a tropical description of secondary polytopes of arbitrary point configurations and show how tropical resultants fit in.

\begin{pro}
Let $A$ be a $d\times m$ integer matrix whose columns affinely span an $r$-dimensional space.  The tropical hypersurface of the secondary polytope of the columns of $A$ is the set
$$
\mathop{\bigcup_{I \subset \{1,\dots,m\}}}_{|I| = r+2} \RR_{\geq 0} \{e_i : i \notin I\} + \row(A) + \RR\{\mathbbm{1}\}.
$$
where $\mathbbm{1}$ denotes the all-ones vector in $\RR^m$.
\end{pro}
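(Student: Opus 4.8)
The plan is to reduce the statement to the description of regular subdivisions by lower facets of a lifted configuration (Section~\ref{sec:secondary}) together with the quoted theorem of Gel'fand--Kapranov--Zelevinsky that the normal fan of $\Sigma(A)$ is the secondary fan of $A$. First I would record what the left-hand side is as a set: by definition the tropical hypersurface of $\Sigma(A)$ is the support of the codimension-one skeleton of its normal fan, which is the secondary fan of $A$; and a weight $\omega \in \RR^m$ lies off this skeleton exactly when it is in the relative interior of a full-dimensional secondary cone, i.e.\ exactly when $\Delta_\omega$ is a regular triangulation of $A$. Since $\conv(A)$ is $r$-dimensional, a maximal cell of a subdivision is a simplex iff it has exactly $r+1$ marked points, so the support of the tropical hypersurface of $\Sigma(A)$ is precisely $\{\omega \in \RR^m : \Delta_\omega \text{ has a maximal cell with at least } r+2 \text{ marked points}\}$.

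Next I would prove the two set inclusions directly, writing $C_I := \RR_{\geq 0}\{e_i : i \notin I\} + \row(A) + \RR\{\mathbbm{1}\}$ for the cone indexed by a subset $I$ with $|I| = r+2$. For the inclusion $\bigcup_I C_I \subseteq (\text{skeleton})$: given $\omega = \sum_{i \notin I} t_i e_i + wA + c\mathbbm{1}$ with $t_i \geq 0$, $w \in \RR^d$, $c \in \RR$, we have $\omega_j - w \cdot a_j = c$ for $j \in I$ and $\omega_j - w\cdot a_j \geq c$ for $j \notin I$, so the set of indices at which $\omega_j - w \cdot a_j$ is minimal contains $I$ and is a cell of $\Delta_\omega$; hence some maximal cell of $\Delta_\omega$ contains all of $I$ and therefore at least $r+2$ marked points. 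Crucially this argument uses nothing about the affine rank of $I$, so the cones $C_I$ coming from ``degenerate'' $I$ (with $\conv(I)$ lower-dimensional, or containing marked points of $A$ outside $I$) are automatically contained in the skeleton and cause no harm. For the reverse inclusion: if $\Delta_\omega$ has a maximal cell $C$ with $|C| \geq r+2$, pick any $I \subseteq C$ with $|I| = r+2$; a lower facet realizing $C$ provides $w \in \RR^d$, $c \in \RR$ with $\omega_j = w\cdot a_j + c$ for $j \in C$ and $\omega_j \geq w\cdot a_j + c$ for all $j$, so $\omega - wA - c\mathbbm{1}$ is supported on the complement of $I$ and nonnegative there, i.e.\ $\omega \in C_I$. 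Combining the two inclusions gives the asserted set equality.

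Finally, for the multiplicity data carried by the tropical hypersurface, I would argue that at a generic $\omega$ in the skeleton $\Delta_\omega$ has a unique non-simplex maximal cell $I$, which then has exactly $r+2$ marked points and affinely spans dimension $r$, and $\omega$ lies in the relative interior of $C_I$ for this unique $I$; by Lemma~\ref{lem:secondarylink} the edge of $\Sigma(A)$ normal to the corresponding codimension-one cone is a translate of the segment $\Sigma(I)$, so the multiplicity there is the lattice length of $\Sigma(I)$, which can be read off from the (essentially unique) affine dependence on the points of $I$.

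I expect the only genuinely delicate point to be the bookkeeping around degenerate index sets $I$: those that do not index honest edges of $\Sigma(A)$ because $\conv(I)$ is not full-dimensional or absorbs extra marked points of $A$. The observation that rescues the clean statement is that $C_I$ sits inside the skeleton for \emph{every} $I$ with $|I| = r+2$, so these spurious (and possibly lower-dimensional) cones are harmless on the right-hand side. A secondary point to handle carefully is the passage between ``maximal cell of $\Delta_\omega$'' and ``lower facet of the lifted configuration'', in particular the fact that the lower-hull inequality holds against all marked points simultaneously, which is exactly what makes the membership $\omega \in C_I$ go through.
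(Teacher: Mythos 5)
Your argument is correct and follows essentially the same route as the paper's proof: identify the complement of the tropical hypersurface with weights inducing regular triangulations (equivalently, no maximal cell with $r+2$ marked points), and identify each cone $\RR_{\geq 0}\{e_i : i \notin I\} + \row(A) + \RR\{\mathbbm{1}\}$ with the weights whose induced subdivision has a cell containing $I$ -- you simply carry out the two inclusions via the lower-hull computation that the paper leaves implicit. The closing paragraph on multiplicities via Lemma~\ref{lem:secondarylink} is extra (the proposition asserts only a set equality) but is consistent with the paper's framework.
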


\begin{proof}
Let $\omega \in \RR^m$, and let $\Delta_\omega$ be the regular subdivision of the columns of $A$ induced by $\omega$.  The weight vector $\omega$ is not in the tropical hypersurface of the secondary polytope if and only if $\Delta_\omega$ is not a triangulation, which happens if and only if there exists a maximal cell of $\Delta_\omega$ containing at least $r+2$ points of $A$.  For an $r+2$-subset $I$ of $\{1,\dots,m\}$, the cone  $\RR_{\geq 0} \{e_i : i \notin I\} + \row(A) + \RR\{\mathbbm{1}\}$ consists of all $\omega$ such that a cell of $\Delta_\omega$ contains $I$.  Note that $\row(A) + \RR\{\mathbbm{1}\}$ consists precisely of the weight vectors that induce the trivial subdivision of $A$ where there is a single maximal cell and all points are marked.
\end{proof}

Comparing with Theorem~\ref{thm:orthants}, we see that in the tropical description of the secondary polytope of $\Cay(\cA)$, the tropical resultant of $\cA$ is the union of the cones corresponding to the $I$'s with two points from each $A_i$.

\begin{ex}
\label{ex:secondary}
Let $A_1 = A_2 = \{0,1,2\}$ in $\ZZ$.  For $\cA = (A_1, A_2)$, the tropical hypersurface of the secondary polytope of $\Cay(\cA)$ and the tropical resultant of $\cA$ are depicted as graphs in Figure~\ref{fig:secondary}.  The resultant polytope has f-vector $(6, 11, 7, 1)$.  The secondary polytope in this case is combinatorially equivalent to the $3$-dimensional associahedron and has f-vector $(14, 21, 9, 1)$.   The first entry, the number of vertices of the polytope, is the number of connected components in the complement of the graph (the tropical hypersurface).  The third entry, the number of facets of the polytope,  can be seen as the number of crossings in this case.

\begin{figure}
\begin{center}
\includegraphics[scale=0.7]{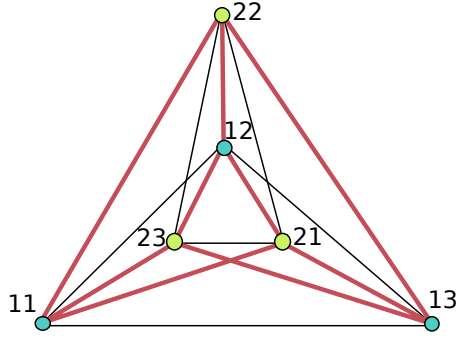}
\end{center}
\caption{A projective drawing of the tropical hypersurface of the secondary polytope of the Cayley configuration of two $1$-dimensional configurations in Example~\ref{ex:secondary}.  The tropical resultant is shown in color (or in bold).  A vertex labeled $ij$ represents the vector $e_{ij}$ in $\RR^6 = \RR^{A_1} \times \RR^{A_2}$, and an edge between $ij$ and $kl$ represents the cone $\RR_{\geq 0}\{e_{ij}, e_{kl}\} + \row(\Cay(\cA))$.  Compare with dual pictures in  \cite[Figure~2]{Sturmfels94} and \cite[Figure~3]{EFK}.}
\label{fig:secondary}
\end{figure}

\end{ex}

\subsection{Codimension of the resultant variety}
\label{sec:codimension}
In this section we discuss how to determine the codimension of the tropical resultant variety $\cT \cR(\cA)$. By the Bieri--Groves Theorem \cite{bierigroves} this is also the codimension of $\cR(\cA)$.

\begin{thm}
\label{thm:codimension}
The codimension of the tropical resultant equals $$k-\textup{Max}_E \textup{dim}(\sum_{i=1}^k \textup{conv}(E_i))$$ where each $E_i$ runs through all cardinality two subsets of $A_i$.
\end{thm}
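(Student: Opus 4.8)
The plan is to read the dimension of $\cT\cR(\cA)$ directly off the description in Theorem~\ref{thm:orthants} of $\cT\cR(\cA)$ as a \emph{finite} union of cones $C_E = \RR_{\geq 0}\{e_{ij} : a_{ij}\notin E_i\} + \row(\Cay(\cA))$, the union running over tuples $E=(E_1,\dots,E_k)$ of cardinality-two subsets $E_i\subseteq A_i$. Since the dimension of a finite union of cones is the maximum of the dimensions, $\codim\cT\cR(\cA) = \min_E \codim C_E$, and everything reduces to computing the codimension of one fixed cone $C_E$. (That this codimension, computed from the tropicalization, agrees with the codimension of $\cR(\cA)$ itself is the Bieri--Groves theorem, already invoked above.)

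Next I would pass to orthogonal complements. The linear span of $C_E$ is $\row(\Cay(\cA)) + \spann\{e_{ij} : a_{ij}\notin E_i\}$, and since $\spann\{e_{ij} : a_{ij}\notin E_i\}^\perp = \spann\{e_{ij} : a_{ij}\in E_i\}$ inside $\RR^m$, its orthogonal complement is
$$
(C_E)^\perp \;=\; \ker(\Cay(\cA)) \cap \spann\{e_{ij} : a_{ij}\in E_i\}.
$$
A vector supported on the $2k$ coordinates indexed by $E := \bigcup_i E_i$ lies in $\ker(\Cay(\cA))$ exactly when its restriction to those coordinates lies in the kernel of the submatrix $\Cay(E)$ of $\Cay(\cA)$ on the corresponding columns. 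Hence $\codim C_E = \dim(C_E)^\perp = 2k - \rank(\Cay(E))$, and it remains to identify $\rank(\Cay(E))$ with $k + \dim\bigl(\sum_{i=1}^k \conv(E_i)\bigr)$.

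For the rank I would exploit the product structure of the Cayley configuration together with the fact that each $E_i$ is a pair. Pick a base point $b_i$ in each $E_i$ and write the other point as $b_i + v_i$; the columns of $\Cay(E)$ are then $(e_i,b_i)$ and $(e_i,b_i+v_i)$ for $i=1,\dots,k$ in $\RR^k\times\RR^n$. Taking differences, their span equals $U+W$ with $U=\spann\{(e_i,b_i):i\}$ and $W=\{0\}\times\spann\{v_1,\dots,v_k\}$; since a nonzero element of $U$ has nonzero first $k$ coordinates we get $U\cap W=0$, so $\rank(\Cay(E)) = k + \dim\spann\{v_1,\dots,v_k\}$. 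Finally $\sum_{i=1}^k\conv(E_i)$ is a Minkowski sum of segments in the directions $v_i$ (a zonotope, lower-dimensional when some $E_i$ consists of two equal points, i.e.\ $v_i=0$), so its affine dimension is precisely $\dim\spann\{v_1,\dots,v_k\}$. Combining, $\codim C_E = 2k - \bigl(k + \dim(\sum_i\conv E_i)\bigr) = k - \dim(\sum_i \conv E_i)$, and minimizing over $E$ yields the formula.

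I do not expect a genuine obstacle: the only points needing care are the bookkeeping of labels versus points (the $e_{ij}$ are indexed by labels, hence distinct even when $A_i$ has repeated points, and the argument handles $v_i=0$ correctly) and the check that passing to orthogonal complements turns the sum of a cone and a linear space into an honest intersection of $\ker(\Cay(\cA))$ with a coordinate subspace. If one prefers, the identity $\rank(\Cay(E)) = k + \dim(\sum_i\conv E_i)$ can alternatively be obtained from the Cayley trick, $\dim\conv(\Cay(E)) = (k-1) + \dim(\sum_i\conv E_i)$, together with the observation that all columns of $\Cay(E)$ lie on the affine hyperplane $\{x_1+\cdots+x_k=1\}$, so the rank exceeds that affine dimension by one.
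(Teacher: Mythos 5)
Your proof is correct, and it takes a route that differs from the paper's in how the key dimension count is carried out, though the overall strategy (reduce to tuples $E$ of pairs and compute a per-$E$ codimension of $k-\dim\sum_i\conv(E_i)$, then maximize) is the same. The paper argues inside the mixed-subdivision picture of Theorem~\ref{thm:dual}: a maximal cone of $\cT\cR(\cA)$ corresponds to a regular mixed subdivision containing a fully mixed zonotopal cell $P=\sum_i\conv(E_i)$, and the codimension $k-\dim P$ of the cone of such lifts is obtained by an informal degrees-of-freedom count (one free height per $E_i$, the remaining points of $E$ constrained to a common lifted affine piece of dimension $\dim P$, all other heights generic and large). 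You instead take the already-proved orthant description of Theorem~\ref{thm:orthants} as the starting point and compute $\codim C_E=2k-\rank\Cay(E)$ exactly, by passing to orthogonal complements ($\row(\Cay(\cA))^\perp=\ker\Cay(\cA)$ intersected with the coordinate subspace spanned by the $e_{ij}$ with $a_{ij}\in E_i$), and then identify $\rank\Cay(E)=k+\dim\bigl(\sum_i\conv(E_i)\bigr)$ from the block structure of the Cayley matrix (equivalently by the Cayley trick); your handling of degenerate pairs $v_i=0$ coming from repeated labeled points is also fine. The two decompositions are essentially the same object --- by Lemma~\ref{lem:unionSecondary}, $C_E$ is the union of the secondary cones whose mixed subdivision has a mixed cell containing $\sum_i E_i$ --- so what your argument buys is a fully explicit linear-algebra verification replacing the paper's heuristic lift-counting, and it dovetails with the multiplicity statement of Theorem~\ref{thm:orthants} (which also involves $\Cay(E)$); the paper's argument, in turn, stays closer to the subdivision combinatorics and does not need the orthant description at all.
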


\begin{proof}
The tropical resultant variety is the collection of all lifts of all
points in $\cA$ which give a fully mixed cell in the subdivision. Therefore
it is the closure of the collection of lifts which give a zonotope in
the mixed subdivision being a sum of convex hull of two points from each $A_i$. Let $P$
be such a zonotope and $E=(E_1,\dots,E_k)$ the $k$ 
pairs of points. We wish to find the dimension of the (relatively
open) cone $C_P$ of lifts which induces $P$. The height of the
first point of each $E_i$ may be chosen freely. The remaining $k$ points of $E$ must be lifted to the same subspace of dimension
$\textup{dim}(P)$, whose lift may be chosen with $\textup{dim}(P)$
degrees of freedom.  Finally, the height of the points not in $E$
maybe chosen generically as long as sufficiently large.  The codimension of $C_P$ is
therefore $k-\textup{dim}(P)$. The theorem follows since there are
only finitely many choices for $E$.
\end{proof}

\begin{lem}
\label{lem:span}
Let $L_i$ denote the subspace affinely spanned by $A_i$. The codimension of $\cR(\cA)$ only depends on the $L_i$'s and equals
$$k-\textup{Max}_{v\in\cprod_iL_i}\textup{dim}(\spann(v_1,\dots,v_k)).$$
\end{lem}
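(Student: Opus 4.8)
The plan is to deduce this directly from Theorem~\ref{thm:codimension} by showing that the quantity $\textup{Max}_E \textup{dim}(\sum_{i=1}^k \textup{conv}(E_i))$, where each $E_i$ ranges over cardinality-two subsets of $A_i$, equals $\textup{Max}_{v \in \cprod_i L_i} \textup{dim}(\spann(v_1,\dots,v_k))$, where $L_i$ is the \emph{linear} span of the difference set $A_i - A_i$ (equivalently, the linear space parallel to the affine span of $A_i$). The key observation is that the Minkowski sum $\sum_i \textup{conv}(E_i)$ is a translate of a zonotope generated by the $k$ edge vectors $v_i := a_{i,j} - a_{i,\ell}$ where $E_i = \{a_{i,j}, a_{i,\ell}\}$; hence $\textup{dim}(\sum_i \textup{conv}(E_i)) = \textup{dim}(\spann(v_1,\dots,v_k))$, and each such $v_i$ lies in $L_i$. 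This immediately gives that the left-hand maximum is at most the right-hand maximum.

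For the reverse inequality, I would argue that the right-hand maximum is already attained by difference vectors of pairs of points in $A_i$. First, each $L_i$ is spanned by the finitely many vectors $\{a - a' : a, a' \in A_i\}$. Given any $v = (v_1,\dots,v_k) \in \cprod_i L_i$ achieving the maximum dimension $d$ of $\spann(v_1,\dots,v_k)$, I want to replace each $v_i$ by a single difference vector $a_{i,j} - a_{i,\ell}$ without decreasing the span. The cleanest way: pick a basis $v_{i_1},\dots,v_{i_d}$ of $\spann(v_1,\dots,v_k)$ from among the $v_i$'s. For the selected indices, expand $v_{i_s}$ as a linear combination of difference vectors in $A_{i_s}$; by a standard exchange/genericity argument at least one such difference vector $w_{i_s}$ can be substituted so that $w_{i_1},\dots,w_{i_d}$ remain linearly independent (one does this greedily, one index at a time, using that a linear combination of vectors spanning a space complementary to the current partial span must have a summand outside it). For the remaining indices $i \notin \{i_1,\dots,i_d\}$, choose $E_i$ arbitrarily. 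This produces a tuple $E$ with $\textup{dim}(\sum_i \textup{conv}(E_i)) \geq d$, completing the equality and hence the lemma, with the dependence only on the $L_i$'s being manifest from the final formula.

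I expect the main obstacle to be the exchange argument in the reverse inequality: one must be careful that difference vectors of \emph{points of $A_i$}, not arbitrary vectors of $L_i$, suffice to realize the maximal span. The subtlety is that we are choosing one difference vector per index $i$, and these choices interact across indices through the joint span; the greedy one-index-at-a-time replacement handles this, but it requires observing that if $w_{i_1},\dots,w_{i_{s-1}}$ are already fixed and independent, and $v_{i_s}$ lies outside their span, then since $v_{i_s}$ is a linear combination of difference vectors of $A_{i_s}$, at least one of those difference vectors also lies outside $\spann(w_{i_1},\dots,w_{i_{s-1}})$, and we take that one as $w_{i_s}$. Once this is in place, everything else is bookkeeping, and the "only depends on the $L_i$'s" clause is automatic since the right-hand side is written purely in terms of the $L_i$'s.
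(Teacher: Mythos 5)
Your overall strategy is the same as the paper's: the easy inequality via the observation that $\sum_i\conv(E_i)$ is a zonotope whose dimension is $\dim\spann(v_1,\dots,v_k)$ for the edge vectors $v_i\in L_i$, and the reverse inequality by exchanging each $v_i$ for a difference of two points of $A_i$ without dropping the dimension. However, your greedy replacement has a genuine gap. At step $s$ you need the premise that $v_{i_s}\notin\spann(w_{i_1},\dots,w_{i_{s-1}})$, and nothing guarantees it: the $w$'s are new vectors, not the original $v_{i_1},\dots,v_{i_{s-1}}$, so the independence of the original basis does not transfer. Worse, when the premise fails there may be no admissible choice at all. Concretely, take $k=2$ in $\RR^2$ with $A_1=\{(0,0),(1,1),(0,2)\}$, $A_2=\{(0,0),(0,1)\}$, and the maximizing tuple $v_1=e_1$, $v_2=e_2$ (so $d=2$). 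Writing $v_1=(1,1)-\tfrac{1}{2}(0,2)$, your rule at step $1$ only asks for a summand outside $\spann(\emptyset)=\{0\}$, so it permits the unlucky choice $w_{i_1}=(0,2)$; at step $2$ we then have $v_2=e_2\in\spann(w_{i_1})$, and every difference vector of $A_2$ is a multiple of $e_2$, so no $w_{i_2}$ outside $\spann(w_{i_1})$ exists. The greedy is stuck even though the maximum $2$ is attained by $E_1=\{(0,0),(1,1)\}$, $E_2=\{(0,0),(0,1)\}$. So the one-index-at-a-time argument, as stated, does not establish the reverse inequality.

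The repair is exactly the twist in the paper's proof: do not discard the not-yet-replaced vectors from the comparison. When processing index $i$, let $W$ be the span of \emph{all} the other current vectors (already substituted differences and untouched $v_j$'s alike). If every difference vector in a representation of $v_i$ lies in $W$, then $v_i\in W$ is redundant and may be replaced by an arbitrary one; otherwise replace $v_i$ by a difference vector outside $W$. In either case the dimension of the span of the full tuple does not decrease, and after $k$ steps every entry is a difference of points of $A_i$, giving $\max_E\dim\bigl(\sum_i\conv(E_i)\bigr)\geq\max_v\dim\spann(v_1,\dots,v_k)$. Equivalently, in your basis formulation the invariant to maintain is independence of $\{w_{i_1},\dots,w_{i_{s-1}},v_{i_s},\dots,v_{i_d}\}$, so the subspace you must avoid at step $s$ has to include the unreplaced $v_{i_{s+1}},\dots,v_{i_d}$ as well; with that change your argument goes through. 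The rest of your plan (the easy direction and the remark that dependence only on the $L_i$'s is manifest from the formula) is correct and agrees with the paper.
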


\begin{proof}
Since $\textup{conv}(E_i)\subseteq L_i$ the quantity of the lemma is smaller than or equal to that of Theorem~\ref{thm:codimension}. Conversely, if we have a collection $v\in\cprod_iL_i$ we now show how we can perform a sequence of changes to $v$ to make it only consist of vectors $v_i$ which are each differences between points of $A_i$ without lowering the dimension of $\spann(v_1,\dots,v_k)$. Consider a vector $v_i$. It is a linear combination of some $u_j$ where each $u_j$ is of the form $a_{is}-a_{it}$. If all $u_j$ belong to $W:=\spann(v_1,\dots,\widehat{v_i},\dots,v_k)$ then so will $v_i$ and it may be substituted by an arbitrary $u_j$ without lowering the dimension. If some $u_j$ does not belong to $W$ then substituting $u_j$ for $v_i$ will not lower the dimension.
\end{proof}

The proof also shows that instead of considering all line segments in Theorem~\ref{thm:codimension} it suffices to consider only a basis for the affine span for each $A_i$. This is useful while computing the codimension with this formula.

\begin{rem}
\label{rem:matroid}
We can define a matroid on a set of polytopes as follows.  A set of polytopes is {\em independent} if they contain independent line segments.  It is straightforward to check that the base exchange axiom holds.
The rank of the matroid is the maximal dimension of a fully mixed cell (a zonotope) spanned by two element subsets, one subset from each polytope.
The codimension of the tropical resultant equals the corank of the matroid, i.e.\ the number of polytopes minus the largest dimension of such a zonotope.  The (tropical) resultant variety is a hypersurface if and only if the matroid has corank one, which holds if and only if there is a unique circuit in the matroid. 
The tuple $\cA$ is {\em essential} \cite{Sturmfels94} if and only if this matroid of $k$ polytopes is uniform of rank $k-1$, that is, the unique circuit of the matroid consists of the entire ground set.
\end{rem}

Using Theorem~\ref{thm:codimension}, we get a new proof of Sturmfels' formula for the codimension of $\cR(\cA)$.  Recall that $Q_i$ is the convex hull of $A_i$.
\begin{thm}\cite[Theorem~1.1]{Sturmfels94}
\label{thm:sturmfelscodimension}
The codimension of the resultant variety $\cR(\cA)$ in $\cprod_{i=1}^k (\CC^*)^{m_i}$ is the maximum of the numbers $|I|-\textup{dim}(\sum_{i\in I}Q_i)$ where $I$ runs over all subsets of $\{1,\dots,k\}$.
\end{thm}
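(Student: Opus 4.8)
The plan is to derive Sturmfels' formula from Theorem~\ref{thm:codimension} and Lemma~\ref{lem:span} by a matroid-theoretic computation in which Rado's theorem does the real work. Write $L_i$ for the direction space of $Q_i$, equivalently the linear span of $\{a-a' : a,a'\in A_i\}$; then $\dim(\sum_{i\in I}Q_i)=\dim(\sum_{i\in I}L_i)$ for every $I\subseteq\{1,\dots,k\}$. By Lemma~\ref{lem:span} the codimension in question equals $k-\rho$, where
\begin{equation*}
\rho \;:=\; \max_{v\in\cprod_i L_i}\dim\spann(v_1,\dots,v_k).
\end{equation*}
So it is enough to prove the min--max identity $\rho=\min_{I}\bigl(k-|I|+\dim\sum_{i\in I}L_i\bigr)$ over subsets $I\subseteq\{1,\dots,k\}$: subtracting from $k$ turns the right-hand side into $\max_I\bigl(|I|-\dim\sum_{i\in I}Q_i\bigr)$, which is exactly the asserted formula.

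One inequality is elementary. For any $I$ and any $v_i\in L_i$, subadditivity of the dimension of a span gives $\dim\spann(v_1,\dots,v_k)\le\dim(\sum_{i\in I}L_i)+|\{1,\dots,k\}\setminus I|$; maximizing over $v$ yields $\rho\le k-|I|+\dim\sum_{i\in I}L_i$ for every $I$. For the reverse inequality I would reduce $\rho$ to a transversal problem. Since, by Lemma~\ref{lem:span}, $\rho$ depends only on the $L_i$'s, fix a basis $S_i$ of each $L_i$ and let $N$ be the linear matroid on the disjoint union $E:=\bigsqcup_i S_i$. The substitution step used in the proof of Lemma~\ref{lem:span} shows that a tuple $(v_i)_{i\in J}$ with $v_i\in L_i$ can be chosen linearly independent if and only if one can choose representatives $x_i\in S_i$ ($i\in J$) that are independent in $N$: processing the indices one at a time, the current tuple is independent, so a $v_i$ still to be replaced lies outside the span $W$ of the others, whence some generator of $L_i$ occurring in $v_i$ with nonzero coefficient lies outside $W$ as well, and replacing $v_i$ by it keeps the tuple independent. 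Hence $\rho$ is exactly the maximum size of a partial transversal of $(S_1,\dots,S_k)$ that is independent in $N$; in the language of Remark~\ref{rem:matroid}, $\rho=\rank_{\cM}(\{1,\dots,k\})$ and the codimension is the corank of $\cM$.

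It then remains to invoke Rado's theorem (equivalently, the matroid intersection theorem for $N$ and the partition matroid on $E$ with blocks $S_i$), which gives
\begin{equation*}
\rho \;=\; \min_{J\subseteq\{1,\dots,k\}}\Bigl(\,|\{1,\dots,k\}\setminus J| + r_N\bigl(\textstyle\bigcup_{i\in J}S_i\bigr)\Bigr)
\;=\; \min_{J}\bigl(k-|J|+\dim\textstyle\sum_{i\in J}L_i\bigr),
\end{equation*}
using $r_N(\bigcup_{i\in J}S_i)=\dim\spann(\bigcup_{i\in J}S_i)=\dim\sum_{i\in J}L_i$. Combining the two inequalities proves the identity, and hence the theorem. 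The main obstacle is precisely this last min--max, i.e.\ Rado's theorem; the substitution step is just a reprise of the argument already made for Lemma~\ref{lem:span}, and everything else is bookkeeping. This route is also the one behind the claim in the introduction that the codimension can be computed in polynomial time by the cardinality matroid intersection algorithm.
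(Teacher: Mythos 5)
Your proof is correct and takes essentially the same route as the paper: both reduce the combinatorial quantity of Theorem~\ref{thm:codimension} to the maximum size of an independent partial transversal in a vector matroid of differences and then let the Rado--Perfect min--max (Theorem~\ref{thm:perfect}) do the work. The only cosmetic difference is that you pass through bases of the direction spaces via Lemma~\ref{lem:span}, spelling out the substitution step and the easy inequality, whereas the paper applies Perfect's theorem directly to the sets of all pairwise differences $S_i=\{a-b : a,b\in A_i\}$, where the identification with pairs $E_i$ is immediate.
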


By the Bieri--Groves Theorem~\cite{bierigroves} and Theorem~\ref{thm:tropicalequalstropicalized}, the codimension of Theorem~\ref{thm:codimension} equals that of Theorem~\ref{thm:sturmfelscodimension}. In the following we explain how the equality of the two combinatorial quantities of Theorems~\ref{thm:codimension} and~\ref{thm:sturmfelscodimension} can also be seen as a consequence of Perfect's generalization (Theorem~\ref{thm:perfect}) of Hall's marriage theorem and Rado's theorem on independent transversals.

Let $S$ be the ground set of a matroid with rank function $\rho$.
Let $\cU = \{S_i : 1 \leq i \leq k\}$ be a family of subsets of $S$.  A subset $S'$ of $S$ is called an {\em independent partial transversal} of $\cU$ if $S'$ is independent and there exists an injection $\theta : S' \rightarrow \{1,2,\dots,k\}$ with $s \in S_{\theta(s)}$ for each $s \in S'$.

\begin{thm}(Perfect's Theorem \cite[Theorem~2]{Perfect})
\label{thm:perfect}
With the notation above, for every positive integer $d$, the family $\cU$ has an independent partial transversal of cardinality $d$ if and only if
$$
d \leq \rho(\cup_{i \in I} S_i) + k - |I|
$$
for every $I \subseteq \{1,2,\dots,k\}$.
\end{thm}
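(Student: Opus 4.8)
The plan is to deduce Perfect's Theorem from Rado's classical theorem on independent transversals by a standard augmentation of both the matroid and the family of sets. Recall Rado's Theorem: a family $(T_1,\dots,T_k)$ of subsets of the ground set of a matroid with rank function $\rho$ admits an \emph{independent transversal} --- distinct representatives $t_i\in T_i$ with $\{t_1,\dots,t_k\}$ independent --- if and only if $\rho\bigl(\bigcup_{i\in I}T_i\bigr)\ge |I|$ for every $I\subseteq\{1,\dots,k\}$ (when the matroid is free this is Hall's theorem). The case $d>k$ is trivial, since no independent partial transversal can have more than $k$ elements while the $I=\emptyset$ instance of the displayed inequality reads $d\le\rho(\emptyset)+k=k$; so both sides of the asserted equivalence are then false. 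Hence we may assume $1\le d\le k$.

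First I would build the augmented matroid. Introduce $k-d$ new elements $z_1,\dots,z_{k-d}$, put $S^+=S\cup\{z_1,\dots,z_{k-d}\}$, and let $M^+$ be the direct sum of the given matroid $M$ with the free matroid on $\{z_1,\dots,z_{k-d}\}$; thus each $z_j$ is a coloop, $\rho^+(X)=\rho(X\cap S)+|X\setminus S|$, and a subset of $S^+$ is $M^+$-independent exactly when its intersection with $S$ is $M$-independent. Set $S_i^+:=S_i\cup\{z_1,\dots,z_{k-d}\}$ for $i=1,\dots,k$.

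The key equivalence to establish is that $\cU=(S_1,\dots,S_k)$ has an independent partial transversal of cardinality $d$ in $M$ if and only if $(S_1^+,\dots,S_k^+)$ has an independent transversal in $M^+$. For the forward direction, given an independent partial transversal $\{y_i:i\in J\}$ with $|J|=d$, assign the $k-d$ coloops $z_1,\dots,z_{k-d}$ bijectively to the sets indexed by $\{1,\dots,k\}\setminus J$; adjoining coloops preserves independence and the $z_j$ are distinct from all $y_i$, so this produces an independent transversal of $(S_1^+,\dots,S_k^+)$. For the converse, an independent transversal $x_1\in S_1^+,\dots,x_k\in S_k^+$ uses each $z_j$ at most once, so at most $k-d$ of the $x_i$ lie outside $S$; the remaining (at least $d$) representatives are distinct, lie in $S$, satisfy $x_i\in S_i$, and form an $M$-independent set, and discarding elements down to exactly $d$ of them --- a subset of an independent partial transversal is again one --- gives what we want.

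Finally I would apply Rado's Theorem to $(S_1^+,\dots,S_k^+)$ in $M^+$: an independent transversal exists iff $\rho^+\bigl(\bigcup_{i\in I}S_i^+\bigr)\ge|I|$ for all $I\subseteq\{1,\dots,k\}$. For $\emptyset\ne I$ we have $\bigcup_{i\in I}S_i^+=\bigl(\bigcup_{i\in I}S_i\bigr)\cup\{z_1,\dots,z_{k-d}\}$, hence $\rho^+\bigl(\bigcup_{i\in I}S_i^+\bigr)=\rho\bigl(\bigcup_{i\in I}S_i\bigr)+(k-d)$, so the Rado inequality becomes precisely $d\le\rho\bigl(\bigcup_{i\in I}S_i\bigr)+k-|I|$; the vacuous $I=\emptyset$ instance of Rado corresponds to the standing assumption $d\le k$, which is the $I=\emptyset$ instance of Perfect's inequality. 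I do not expect a genuine obstacle: the only care needed is the bookkeeping in the augmentation and the separation of the trivial range $d>k$. If one wants a fully self-contained argument, the real content is Rado's Theorem itself, which can be cited or proved by the usual short induction on $\sum_i|S_i|$ via matroid deletion/contraction; alternatively one can bypass Rado by noting that an independent partial transversal is exactly a common independent set of $M$ and the transversal matroid of $\cU$ and invoking Edmonds' matroid intersection theorem, the quantity $\min_I\bigl(\rho(\bigcup_{i\in I}S_i)+k-|I|\bigr)$ then dropping out after a one-line submodularity simplification of the rank of the transversal matroid.
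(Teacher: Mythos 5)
Your argument is correct. Note, however, that the paper does not prove this statement at all: it is quoted verbatim from Perfect's paper (\cite[Theorem~2]{Perfect}) and used as a black box to show that the codimension formulas of Theorems~\ref{thm:codimension} and~\ref{thm:sturmfelscodimension} agree. What you supply is the standard deduction of this ``defect form'' of Rado's theorem from Rado's theorem itself: adjoin $k-d$ coloops to the matroid and to every set $S_i$, check that independent partial transversals of cardinality $d$ for $\cU$ correspond exactly to independent (full) transversals of the augmented family, and translate Rado's rank condition back, with the trivial range $d>k$ and the $I=\emptyset$ case handled separately. I verified the two directions of your correspondence and the rank computation $\rho^{+}\bigl(\bigcup_{i\in I}S_i^{+}\bigr)=\rho\bigl(\bigcup_{i\in I}S_i\bigr)+(k-d)$ for $I\neq\emptyset$; all are sound, and the only looseness is the phrase identifying the $I=\emptyset$ instance of Rado (which is vacuous) with the assumption $d\le k$ --- more precisely, the $I=\emptyset$ instance of Perfect's inequality is exactly $d\le k$, which you have assumed, so nothing is lost. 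Your alternative remark via Edmonds' matroid intersection with the transversal matroid is also a legitimate route, though the simplification of the min-max formula takes slightly more than the advertised one line. In short: the paper buys the statement by citation; your write-up buys it from the better-known Rado theorem by a clean, self-contained reduction, which is a perfectly acceptable substitute.
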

In particular, the maximum cardinality of an independent partial transversal is equal to the minimum of the numbers on the RHS of the inequality.

\begin{proof}[Proof of Theorem~\ref{thm:sturmfelscodimension}]
Let $S_i = \{a-b: a,b \in A_i\}$, $S = \bigcup_{i=1}^k S_i$, and $\cU = \{S_i : 1 \leq i \leq k\}$.  Consider the vector matroid on $S$ given by linear independence.  Then the quantity $\textup{Max}_E \textup{dim}(\sum_{i=1}^k \textup{conv}(E_i))$ is the cardinality of the maximal independent partial transversal of $\cU$.  By Perfect's Theorem, 
$$
\textup{Max}_E \textup{dim}(\sum_{i=1}^k \textup{conv}(E_i))
= \textup{Min}_{I \subseteq\{1,2,\dots,k\}} \dim(\sum_{i \in I} Q_i) + k - |I|.
$$
Hence the two quantities from Theorems~\ref{thm:codimension} and~\ref{thm:sturmfelscodimension} are equal.
\end{proof}

%First we observe that a choice of $E$ in LHS gives an upper bound on its codimension while a choice of $I$ in RHS gives a lower bound on its codimension. Let $E$ and $I$ be two such choices. We have 
%\begin{align*}
%\textup{dim}(\sum_{i\in I}\textup{conv}(A_{i_{f}}|f\in E_i)) & \leq \textup{dim}(\sum_{i\in I}Q_i), \textup{ and } \\
%\textup{dim}(\sum_{i\not\in I}\textup{conv}(A_{i_{f}}|f\in E_i)) &\leq k+1-|I|
%\end{align*}
%which imply 
%\begin{align*}
%\textup{dim}(\sum_{i}\textup{conv}(A_{i_{f}}|f\in E_i))
%& \leq \textup{dim}(\sum_{i\in I}\textup{conv}(A_{i_{f}}|f\in E_i)) +\textup{dim}(\sum_{i\not\in I}\textup{conv}(A_{i_{f}}|f\in E_i))\\
%& \leq k+1-(|I|-\textup{dim}(\sum_{i\in I}Q_i)).
%\end{align*}
%Subtracting each side of this inequality from $k+1$, we see that LHS is greater than or equal to RHS.

%To show equality it remains to find $E$ and $I$ such that
%$$k+1-\textup{dim}(\sum_{i=0}^k \textup{conv}(E_i) =  |I|-\textup{dim}(\sum_{i\in I}Q_i).$$

Straightforward evaluation of the formulas in Theorems~\ref{thm:codimension} and~\ref{thm:sturmfelscodimension} will require time complexity exponential in the input.  Moreover, the maximal bipartite matching problem is a special case of this codimension problem.

\begin{lem}
The maximal bipartite matching problem is reducible in polynomial time to the problem of computing codimension of resultants.
\end{lem}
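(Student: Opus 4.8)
**

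The plan is to exhibit an explicit polynomial-time reduction that takes a bipartite graph $G = (U \sqcup V, \Xi)$ and produces a tuple $\cA$ of point configurations whose resultant codimension encodes the size of a maximum matching in $G$. The natural vehicle is the formula of Theorem~\ref{thm:codimension} (or equivalently Lemma~\ref{lem:span}), which computes the codimension as $k - \textup{Max}_{v \in \cprod_i L_i} \dim(\spann(v_1,\dots,v_k))$, i.e.\ $k$ minus the maximum size of an independent partial transversal of the family $\{S_1,\dots,S_k\}$ in the vector matroid on $\bigcup_i S_i$, where $S_i = \{a - b : a, b \in A_i\}$. So it suffices to realize an arbitrary bipartite matching instance as such an independent-transversal problem.

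First I would set $k = |U|$, indexing the configurations $A_1,\dots,A_k$ by the left vertices $u_1,\dots,u_k$ of $G$, and let the ambient lattice be $\ZZ^{1 + |V|}$ with the affine convention (so that a configuration with affinely independent points of the right shape contributes the span of the corresponding difference vectors). For each $u_i$, let $N(u_i) \subseteq V$ be its neighborhood; define $A_i$ to be a configuration whose associated difference space $L_i$ is exactly $\spann\{e_v : v \in N(u_i)\} \subseteq \RR^{|V|}$ — concretely, take $A_i = \{0\} \cup \{e_v : v \in N(u_i)\}$ as points of $\ZZ^{|V|}$, which has $|N(u_i)| + 1 \geq 2$ points (pad with a repeated point if $u_i$ is isolated, which is allowed since repeated points are permitted). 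Then $S_i = \{\pm e_v, e_v - e_{v'} : v, v' \in N(u_i)\}$, and in the vector matroid on $\RR^{|V|}$ the set $\{e_v : v \in V\}$ is a basis; an independent partial transversal of $\{S_1,\dots,S_k\}$ of size $d$ corresponds (after the base-exchange/substitution argument already used in the proof of Lemma~\ref{lem:span}, which lets us replace each transversal element by a standard basis vector $e_v$ with $v \in N(u_i)$ without lowering the dimension) precisely to a matching of size $d$ in $G$: the injection $\theta$ plays the role of the matching, with $u_i$ matched to the vertex $v$ for which $e_v$ is selected from $S_i$.

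Hence $\textup{Max}_E \dim(\sum_i \conv(E_i)) = \nu(G)$, the maximum matching size, and by Theorem~\ref{thm:codimension} the codimension of $\cR(\cA)$ equals $k - \nu(G) = |U| - \nu(G)$. The reduction is clearly computable in polynomial time: $\cA$ has $\sum_i (|N(u_i)| + 1) = |\Xi| + |U|$ points in dimension $|V|$, all with $0/1$ entries. Therefore a polynomial-time algorithm for resultant codimension yields one for maximum bipartite matching, proving the lemma.

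The main obstacle is not the construction but making precise the claim that independent partial transversals of the $S_i$ biject with matchings. The subtlety is that $S_i$ contains not just the basis vectors $e_v$ ($v\in N(u_i)$) but also their differences $e_v - e_{v'}$, so a priori a transversal might pick such a difference and still be independent; one must invoke the substitution step from the proof of Lemma~\ref{lem:span} to argue that any independent partial transversal can be converted, without loss of size, into one using only standard basis vectors, at which point independence of the selected $\{e_{v_i}\}$ is equivalent to the $v_i$ being distinct, which is exactly the matching condition. Once that equivalence is nailed down, everything else is bookkeeping.
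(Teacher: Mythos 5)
Your reduction is correct and is essentially the paper's own construction: the paper likewise takes one configuration per vertex of one side of the bipartition, namely $\{0\}\cup\{e_u : u \text{ a neighbor}\}$ in the lattice indexed by the other side, so that the codimension formula of Theorem~\ref{thm:codimension} turns maximum matchings into maximal independent partial transversals (you merely swap the roles of $U$ and $V$, which is immaterial). Your extra care with the substitution step from Lemma~\ref{lem:span} and the padding for isolated vertices just fills in details the paper's terser proof leaves implicit.
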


\begin{proof}
Let $G$ be a bipartite graph with vertices $U \sqcup V$ and edges $E \subset U \times V$. Let $\{e_u : u \in U \}$ be the standard basis for $\RR^{U}$.  For each $v \in V$, let $A_v = \{e_u : (v,u) \in E\}$. 
Then the maximal cardinality of a bipartite matching in $G$ is equal to the dimension of the resultant variety of $\cA = (\{0\}\cup A_v : v \in V)$, and the size of $\cA$ is polynomial in the size of $G$.
\end{proof}

%Therefore an efficient algorithm for codimension would be non-trivial.  However, 

We use Theorem~\ref{thm:codimension} to construct an efficient algorithm for computing the codimension of a resultant.

\begin{thm}
\label{thm:polytime} The codimension of the resultant can be computed in polynomial time in the size of the input. 
\end{thm}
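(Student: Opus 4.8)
The plan is to recast the combinatorial formula of Theorem~\ref{thm:codimension} as a matroid‑theoretic optimization problem that can be solved by a standard polynomial‑time algorithm. By Theorem~\ref{thm:codimension} and Lemma~\ref{lem:span}, the codimension equals $k - \rho$, where $\rho$ is the maximum, over $v \in \prod_{i=1}^k L_i$, of $\dim(\spann(v_1,\dots,v_k))$, and by the remark after Lemma~\ref{lem:span} it suffices to pick the $v_i$ from a fixed finite generating set of $L_i$ (e.g.\ the differences $a_{i,j} - a_{i,1}$, of which there are at most $m_i - 1$). So the computational problem is: choose at most one vector from each of $k$ explicitly given sets of integer vectors $S_1,\dots,S_k \subseteq \ZZ^n$, so as to maximize the rank of the chosen set. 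This is exactly the problem of finding a maximum‑cardinality common independent set of two matroids on the ground set $S = \bigsqcup_i S_i$: the vector matroid given by linear independence over $\QQ$ (equivalently $\RR$), and the partition matroid whose independent sets are those picking at most one element from each block $S_i$.

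First I would make the reduction to matroid intersection precise: an independent partial transversal of $\cU = \{S_i\}$, in the sense of the discussion preceding Theorem~\ref{thm:perfect}, is precisely a common independent set of these two matroids, and its maximum cardinality is exactly $\rho$. Hence $\codim \cT\cR(\cA) = k - (\text{max common independent set size})$. Second, I would invoke the cardinality matroid intersection algorithm (as already advertised in the introduction, ``the cardinality matroid intersection algorithm''): it runs in time polynomial in the ground‑set size and the cost of the two independence oracles. Here $|S| \le m$, the partition‑matroid oracle is trivial, and the vector‑matroid oracle amounts to Gaussian elimination over $\QQ$ on integer matrices with entries bounded by the input, which is polynomial‑time. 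Assembling these, the whole computation is polynomial in the bit‑size of $\cA$.

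The one point requiring care — and the main obstacle — is the bit‑complexity of the linear‑algebra oracle: when performing Gaussian elimination over $\QQ$ on subsets of the difference vectors $a_{i,j} - a_{i,1}$, one must ensure the intermediate numerators and denominators do not blow up superpolynomially. This is handled by standard fraction‑free (Bareiss‑type) elimination or by computing ranks modulo a few primes, both of which keep all intermediate integers polynomially bounded in the input size; with that in hand, each rank query costs polynomial time, and since the matroid intersection algorithm makes only polynomially many queries, the total running time is polynomial. I would conclude by noting that, as in the proof of Theorem~\ref{thm:sturmfelscodimension}, Perfect's Theorem~\ref{thm:perfect} identifies this optimum with $\min_{I} \dim(\sum_{i\in I} Q_i) + k - |I|$, so the same algorithm simultaneously evaluates Sturmfels' formula of Theorem~\ref{thm:sturmfelscodimension} in polynomial time.
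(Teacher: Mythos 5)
Your proposal is correct and follows essentially the same route as the paper: reduce the formula of Theorem~\ref{thm:codimension} (via Lemma~\ref{lem:span}) to a maximum common independent set of a vector matroid and a transversal/partition matroid, then apply the cardinality matroid intersection algorithm with polynomial-time independence oracles. The only difference is that you use the partition matroid on the disjoint union of the blocks rather than the transversal matroid on their union, a variant the paper itself points out immediately after its proof, and your added remarks on the bit-complexity of the rank oracle are a welcome but minor refinement.
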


\begin{proof}
Let $\cA=(A_1, A_2, \dots, A_k)$ where each $A_i$ is a point configuration in $\ZZ^n$.
By Lemma~\ref{lem:span},  the codimension of $\cR(\cA)$ depends only on the linear spaces $L_1$, $L_2$, $\dots$, $L_k$ affinely spanned by $A_1, A_2, \dots, A_k$ respectively.  Choose a basis $B_i$ for each linear space $L_i$. Let $\cB =\{ B_1, B_2, \dots, B_k \}$ and $S = \bigcup_{i=1}^k B_i$.  A subset $S'$ of $S$ is called a {\em partial transversal} of $\cB$ if there is an injection $\theta : S' \rightarrow \{1,2,\dots,k\}$ with $s \in B_{\theta(s)}$.  The collection of partial transversals form an independent system of a matroid $\cM_1$ on ground set $S$, called the  {\em transversal matroid} of $\cB$.  Let $\cM_2$ be the vector matroid on $S$ defined by linear independence.  By Theorem~\ref{thm:codimension}, computing the codimension of the resultant is equivalent to computing the maximum cardinality of a linearly independent partial transversal, i.e.\ the largest subset of $S$ which is independent in both $\cM_1$ and $\cM_2$. 

We can use the cardinality matroid intersection algorithm \cite[Section~41.2]{SchrijverVolB} to find the maximum cardinality of a set independent in two matroids on the same ground set.  This algorithm is polynomial in the size of $S$ and the time for testing independence in the matroids.  Testing independence in $\cM_1$ can be reduced to the maximal bipartite matching problem and can be solved in polynomial time.  Testing linear independence in $\cM_2$ can be reduced to finding the rank of a matrix, which also takes polynomial time. 
\end{proof}

Alternatively, in the proof above, we can take $S$ to be the disjoint union of $B_1, B_2, \dots, B_k$, then the transversal matroid $\cM_1$ can be replaced by the partition matroid.  This was done in \cite{DyerGritzmannHufnagel} to prove that whether a mixed volume is zero can be decided in polynomial time.  This problem reduces to the codimension problem by observing that for a tuple $\cA = (A_1, A_2, \dots, A_n)$ of point configurations in $\ZZ^n$, the mixed volume of the convex hulls of $A_1, A_2, \dots, A_n$ is non-zero if and only if the codimension of $\cR(\cA)$ is zero, by Bernstein's Theorem.  

The algorithm described in Theorem~\ref{thm:polytime} is rather complex, but there is a simpler probabilistic or numerical algorithm.
For generic vectors $v_i \in L_i$ for $i = 1,2,\dots,k$, the codimension of the resultant is equal to $k - \rank( [v_1 | v_2 | \cdots | v_k] )$.
The challenge of turning this into a deterministic algorithm lies in making sure that the choices for $v_i$ are generic.  Our naive attempts at symbolic perturbations resulted in matrices whose ranks cannot be computed in polynomial time.

The polynomial time algorithm of Theorem~\ref{thm:polytime} can be used for finding a generic point in $\cT\cR(\cA)$ in polynomial time. Simply remove points from $\cA$ as long as the dimension does not drop. When we can no longer remove points, we have exactly two points left from each configuration of $\cA$. We then compute a generic point in $\cT\cR(\cA)$ using Theorem~\ref{thm:orthants}, possibly using a symbolic $\varepsilon$. It is unclear if a polynomial time algorithm exists for finding a generic point in specialized tropical resultants that we will see in Section~\ref{sec:special}.

\subsection{Traversing tropical resultants}
\label{sec:algorithms}
Tropical resultants are pure and connected in codimension 1. This allows the facets (maximal faces) to be enumerated via the well-known adjacency decomposition approach. By this we mean traversing the connected bipartite graph encoding the facet-ridge incidences of the fan.
Three operations are essential. We must be able to find some maximal cone in the fan, find the link at a ridge, and compute an adjacent maximal cone given a ray of the link at the ridge. In \cite{traversingsymmetricfans} these subcomputations were isolated in an oracle, and the author discussed a general algorithm for traversing a polyhedral fan (up to symmetry) represented only through oracle calls.  In the following paragraphs we will describe how to walk locally in the tropical resultant.  More details can be found in the next section for the more general setting of specialized tropical resultant.

To find a starting cone for the traversal, we use the description of the tropical resultant as a union of orthants plus a linear space, as described in Theorem~\ref{thm:orthants}.  Alternatively, a generic vector in a maximal cone of a resultant fan can be found in polynomial time using the algorithms for the codimension, as noted at the end of Section~\ref{sec:codimension}.

To find ridges, we compute facets of maximal cones.
To find the link of the tropical resultant at a point, we use the fact that the link at a point $\omega$ is a union of smaller tropical resultants associated to the fully mixed cells in the mixed subdivision of $\cA$ induced by $\omega$, as shown in Proposition~\ref{prop:linkdecomposition}.

In the tropical resultant, as a subfan of the secondary fan of $\Cay(\cA)$, each cone can be represented by a regular subdivision of $\Cay(\cA)$.
The smallest secondary cone containing a given vector $\omega$ can be constructed from the regular subdivision induced by $\omega$ as explained in \cite[Section~5.2]{TriangulationsBook}.

In our implementation we represent the regular subdivision $\Delta$ induced by $\omega$ by $\omega$ and the triangulation induced by a ``placing'' or ``lexicographic'' perturbation of $\omega$.  From this triangulation, we can easily recover the subdivision $\Delta$ by comparing the normal vectors of the maximal cells of the triangulation lifted by $\omega$.  For this to work, it is important to perturb $\omega$ in such a way that all the marked points in $\Delta$ remain marked in the refined triangulation.  A full triangulation of $\Cay(\cA)$ is computed from scratch only once at the beginning.  There are standard methods for computing a placing triangulation of any point configuration; see \cite[Section~8.2.1]{TriangulationsBook}.  To obtain a desired triangulation from a known triangulation, we find a path in the flip graph of regular triangulations and perform flips as in \cite[Section~8.3.1]{TriangulationsBook}.  This is the analogue of a Gr\"obner walk in the setting of secondary fans.  
 
 To find the secondary cone in the link at $u$ given by a ray $v$, we compute the subdivision induced by $u + \varepsilon v$ for sufficiently small $\varepsilon > 0$.  Such a vector $u + \varepsilon v$ is represented symbolically in a way similar to a matrix term order in the theory of Gr\"obner bases.

\section{Resultants with specialized coefficients}
\label{sec:special}

For some applications such as implicitization we need to compute resultant varieties while specializing some of the coefficients. 
This problem was studied in \cite{EKP, EFKP} for the case when the resultant variety is a hypersurface.  In that case, the Newton polytope of the specialized resultant is the projection of the resultant polytope, and the authors computed the projection of resultant polytopes using Sturmfels' formula for vertices of resultant polytopes \cite[Theorem~2.1]{Sturmfels94} and beneath-beyond or gift-wrapping methods for computing convex hulls.  
In our language, computing a projection of a polytope is equivalent to computing the restriction of the normal fan to a subspace. %From the projection they recovered the specialized resultant polytope.

In tropical geometry, specialization of certain coefficients amounts to taking the {\em stable intersection} of the tropical resultant with certain coordinate hyperplanes. In this section we first define the specialized tropical resultants and then present algorithms for computing them.

A polyhedral complex in $\RR^n$ is called {\em locally balanced} if it is pure dimensional and the link at every codimension one face positively spans a linear subspace of $\RR^n$.

\begin{defn}
Let $\cF_1$ and $\cF_2$ be locally balanced fans in $\RR^n$. We define the stable intersection as the fan 
\begin{align*}\cF_1\stint\cF_2:= \{C_1\cap C_2: & (C_1,C_2)\in\cF_1\times\cF_2 \text{ and }\\
 &\supp(\link_{C_1}(\cF_1))-\supp(\link_{C_2}(\cF_2))=\RR^n\}
\end{align*}
with support
$$\supp(\cF_1\stint\cF_2)=\{\omega\in\RR^n:\supp(\link_\omega(\cF_1))-\supp(\link_\omega(\cF_2))=\RR^n\}.$$ 
If in addition $\cF_1$ and $\cF_2$ are balanced then the stable intersection inherits multiplicities from $\link_\omega(\cF_1)$ and $\link_\omega(\cF_2)$ as follows:
$$\mult_\omega(\cF_1\stint\cF_2):=$$
$$\sum_{C_1,C_2}\mult_{C_1}(\link_\omega \cF_1)\cdot\mult_{C_2}(\link_\omega\cF_2)\cdot[\ZZ^n:(\ZZ^n\cap\RR C_1)+(\ZZ^n\cap\RR C_2)]$$ where the sum runs over 
$C_1\in\link_\omega(\cF_1)$ and $C_2\in\link_\omega(\cF_2)$ such that $\omega'\in C_1-C_2$ for a fixed generic vector
 $\omega'\in\RR^n$.
\end{defn}
Notice that the support of $\cF_1\stint\cF_2$ depends only on $\supp(\cF_1)$ and $\supp(\cF_2)$. We will therefore extend the definition of stable intersections to intersections of supports of locally balanced fans and regard them as subsets of $\RR^n$.

For proofs of the following six statements, of which some are known to the community already, we refer to the upcoming paper \cite{stableIntersection}.

Orthogonally projecting a polytope onto a linear space is equivalent to stably intersecting the tropical hypersurface of the polytope with the linear space.  This is consistent with the fact that the Newton polytope of the specialized resultant is a projection of the resultant polytope onto a suitable coordinate subspace.

\begin{thm}
Let $P\subset \RR^n$ be a polytope, $L \subset \RR^n$ be a linear subspace, and $\pi : \RR^n \rightarrow L$ be the orthogonal projection.  Then 
$$\cT(\pi(P))=(\cT(P)\stint L)+ L^\perp.$$
\end{thm}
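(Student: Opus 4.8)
The plan is to reduce the claimed equality of sets to a local statement about the normal fan of $P$, and then to a short lemma on complete pointed fans.

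\emph{Reduction to a face condition.} Since $\pi$ is orthogonal projection onto $L$, for $w\in L$ and $p\in P$ one has $\langle w,\pi(p)\rangle=\langle w,p\rangle$ because $p-\pi(p)\in L^\perp$; hence the face of $\pi(P)$ dual to $w$ is $\pi(F_w)$, where $F_w$ denotes the face of $P$ dual to $w$ in the normal fan. It follows that $\cT(\pi(P))$, regarded as a subset of $\RR^n$, is invariant under translation by $L^\perp$, and so is $(\cT(P)\stint L)+L^\perp$ (which makes sense because $\cT(P)\stint L\subseteq L$, the stable intersection with a linear space being contained in it). Two $L^\perp$-invariant subsets of $\RR^n=L\oplus L^\perp$ coincide iff their intersections with $L$ do, so, using that $w\in\supp(\cT(P)\stint L)$ iff $w\in L$ and $\supp(\link_w(\cT(P)))+L=\RR^n$ (because $\supp\link_w(L)=L$), it suffices to prove for every $w\in L$ the equivalence
\begin{equation*}
\dim\pi(F_w)\geq 1\quad\Longleftrightarrow\quad \supp(\link_w(\cT(P)))+L=\RR^n.
\end{equation*}

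\emph{Localization.} Fix $w\in L$, write $F=F_w$, and let $W$ be the lineality space of the normal cone $N_P(F)$, so that $W^\perp$ is the linear span of $F-F$ and $\dim W^\perp=\dim F$. By the standard identity $\link_w(\cN(P))=\cN(F)$, the link $\link_w(\cT(P))$ is the codimension-one skeleton of the normal fan $\cN(F)$; modulo its lineality $W$ this is the codimension-one skeleton $\overline{\cT}$ of a complete \emph{pointed} fan in $W^\perp$, namely the normal fan of $F$ viewed as a full-dimensional polytope in its affine span. Thus $\supp\link_w(\cT(P))=\overline{\cT}+W$. The left-hand side of the equivalence holds iff $F-F\not\subseteq L^\perp$, i.e.\ iff $L\not\subseteq W$; and since $W\subseteq W+L$ and $\RR^n=W\oplus W^\perp$, the right-hand side $\overline{\cT}+(W+L)=\RR^n$ is equivalent to $\overline{\cT}+V=W^\perp$, where $V:=W^\perp\cap(W+L)$ has dimension $\dim(W+L)-\dim W$. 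Now $\dim V\geq 1$ precisely when $L\not\subseteq W$; and when $\dim V=0$ the identity $\overline{\cT}+V=W^\perp$ fails ($\overline{\cT}$ is either a proper, codimension-one subset of $W^\perp$, or empty when $W^\perp=\{0\}$), so both sides of the equivalence are then false. Hence everything reduces to the following.

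\textbf{Lemma.} If $\overline\Sigma$ is a complete pointed fan in a vector space $U$, $\overline{\cT}$ the support of its codimension-one skeleton, and $V\subseteq U$ a subspace with $\dim V\geq 1$, then $\overline{\cT}+V=U$.

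I would prove the Lemma as follows: $U\setminus\overline{\cT}$ is the union of the interiors of the maximal (hence full-dimensional) cones of $\overline\Sigma$, which are pairwise disjoint open sets. A pointed cone contains no line, hence no affine subspace of positive dimension — if $p+\RR u$ lies in a closed cone $C$, then scaling by $1/|t|$ and using closedness shows $u,-u\in C$, a contradiction. Therefore, for any $p\in U$, the affine subspace $p+V$ is connected and of dimension $\geq 1$, so it cannot lie in a single maximal cone; being unable to avoid $\overline{\cT}$, it must meet it, i.e.\ $(p+V)\cap\overline{\cT}\neq\emptyset$. This gives the Lemma, and hence the theorem. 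One can moreover check that the multiplicities agree (the edge of $\pi(P)$ dual to $w$ is the image of the edge of $P$ dual to $w$, and its lattice length equals the stable-intersection multiplicity), but that is secondary. The one delicate point in the argument is the bookkeeping with the lineality spaces $W,W^\perp,V$ in the localization step — in particular verifying that the degenerate cases ($\dim F_w=0$, or $L\subseteq W$) land correctly on the ``false'' side of the equivalence and that $\dim V\geq 1$ is exactly the hypothesis the Lemma requires.
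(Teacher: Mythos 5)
The paper itself contains no proof of this theorem: it is one of the six statements whose proofs are deferred to the companion paper \cite{stableIntersection}, so there is no in-paper argument to compare yours against, and I can only assess the proposal on its own terms. As a proof of the stated identity of subsets of $\RR^n$ (recall the paper explicitly extends $\stint$ to supports of locally balanced fans), your argument is correct: the reduction to an equivalence at points $w\in L$, the identification $\supp\link_w(\cT(P))=\overline{\cT}+W$ via $\link_w(\cN(P))=\cN(F_w)$, the bookkeeping with $V=W^\perp\cap(W+L)$ including the degenerate cases, and the lemma that every affine subspace of positive dimension must meet the codimension-one skeleton of a complete pointed fan are all sound. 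One definitional slip should be repaired: $W$ is not the lineality space of the cone $N_P(F)$ --- that lineality space is $\spann(P-P)^\perp$, hence $\{0\}$ whenever $P$ is full-dimensional --- rather, what you need and in fact use is $W=\spann(F-F)^\perp$, i.e.\ the linear span of $N_P(F)$, equivalently the lineality space of the normal fan $\cN(F)$. Also, your parenthetical remark on multiplicities is not justified as written: for a generic $w$ in a maximal cone of $(\cT(P)\stint L)+L^\perp$ the face of $P$ dual to $w$ need not be an edge (project a square onto a generic line in $\RR^2$), so ``the edge of $\pi(P)$ dual to $w$ is the image of the edge of $P$ dual to $w$'' fails in general, and matching multiplicities would require the stable-intersection multiplicity formula. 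Since the displayed equation is a support-level statement, this aside does not affect the validity of your proof of the theorem as stated.
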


\begin{lem}
\label{lem:stableAssociative}
For any locally balanced fans $\cF_1$, $\cF_2$, and $\cF_3$, we have
\begin{enumerate}
\item $(\cF_1 \stint \cF_2) \stint \cF_3 = \cF_1 \stint (\cF_2 \stint \cF_3)$
\item $(\supp(\cF_1) \cup \supp(\cF_2)) \stint \supp(\cF_3)  = \supp(\cF_1 \stint \cF_3) \cup \supp(\cF_2 \stint \cF_3)$.
\end{enumerate}
\end{lem}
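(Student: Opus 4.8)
For part~(1), note first that $\cF_1\stint\cF_2\stint\cF_3$ is a fan each of whose cones is an intersection $C_1\cap C_2\cap C_3$ with $C_i\in\cF_i$, so proving the identity amounts to checking that three data — which triple intersections are cones of the result, what their multiplicities are, and which points of $\RR^n$ lie in the support — are all independent of the bracketing. My plan is to reduce everything to the origin using the (easily verified) fact that links commute with stable intersection, $\link_p(\cF_1\stint\cF_2)=\link_p(\cF_1)\stint\link_p(\cF_2)$ for $p$ in the relative interior of $C_1\cap C_2$, which follows by expanding the definition of $\stint$ together with the iterated-link identity $\link_q(\link_p\cF)=\link_{p+\varepsilon q}\cF$ for small $\varepsilon>0$; and then to invoke the \emph{fan displacement rule}: for locally balanced fans with supports $\Sigma_i$ and generic small translation vectors $t_i$, the support of the iterated stable intersection equals $\lim_{t_i\to0}\bigcap_i(\Sigma_i+t_i)$, and a maximal cone's multiplicity is the sum, over the finitely many transverse intersection points of the translated supports near a generic point of that cone, of the products of local multiplicities times the lattice index $[\ZZ^n:\sum_i(\ZZ^n\cap\RR C_i)]$.

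\textbf{Part (1).} For the support statement, $\supp\bigl((\cF_1\stint\cF_2)\stint\cF_3\bigr)$ is computed by the displacement rule by first perturbing $\cF_2$ (resolving $\cF_1\stint\cF_2$) and then perturbing $\cF_3$ by a much smaller amount, i.e.\ it equals $\lim\Sigma_1\cap(\Sigma_2+t_2)\cap(\Sigma_3+t_3)$ in the regime $|t_3|\ll|t_2|$, whereas $\supp\bigl(\cF_1\stint(\cF_2\stint\cF_3)\bigr)$ equals the same limit in the regime $|t_2|\ll|t_3|$; since a generic choice of the \emph{pair} $(t_2,t_3)$ makes this limit exist and coincide for all relative rates, the two supports agree. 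For the cone structure, the links-commute identity turns membership of $C_1\cap C_2\cap C_3$ in $(\cF_1\stint\cF_2)\stint\cF_3$ into the pair of conditions $\supp(\link_{C_1}\cF_1)-\supp(\link_{C_2}\cF_2)=\RR^n$ and $\supp\bigl(\link_{C_1}\cF_1\stint\link_{C_2}\cF_2\bigr)-\supp(\link_{C_3}\cF_3)=\RR^n$; applying the support statement just proved to the locally balanced fans $\link_{C_i}(\cF_i)$ shows this pair is symmetric in $1,2,3$, so both bracketings yield the same cones. For multiplicities, at a generic point $\omega$ of a maximal cone $C_1\cap C_2\cap C_3$ both bracketings count the same transverse intersection points of the three translated supports near $\omega$, and the only thing to verify is that the index $[\ZZ^n:(\ZZ^n\cap\RR C_1)+(\ZZ^n\cap\RR C_2)+(\ZZ^n\cap\RR C_3)]$, assembled recursively from the two pairwise steps, is the same whichever order the steps are taken in — a short-exact-sequence (Smith normal form) computation that uses $\RR(C_i\cap C_j)=\RR C_i\cap\RR C_j$ on the transverse cones.

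\textbf{Part (2).} The support of a stable intersection depends only on the supports of the inputs, so set $\Sigma_i=\supp(\cF_i)$ and give $\Sigma_1\cup\Sigma_2$ the fan structure of the common refinement of $\cF_1$ and $\cF_2$ (locally balanced when the two have the same dimension; in general it is the displacement description below that is used). Fix $t$ generic enough to serve the displacement rule for $(\Sigma_1\cup\Sigma_2)\stint\Sigma_3$, $\Sigma_1\stint\Sigma_3$ and $\Sigma_2\stint\Sigma_3$ simultaneously — only finitely many conditions. Then $\omega\in(\Sigma_1\cup\Sigma_2)\stint\Sigma_3$ iff there are $s_j\downarrow0$ and $x_j\to\omega$ with $x_j\in(\Sigma_1\cup\Sigma_2)\cap(\Sigma_3+s_jt)=\bigl(\Sigma_1\cap(\Sigma_3+s_jt)\bigr)\cup\bigl(\Sigma_2\cap(\Sigma_3+s_jt)\bigr)$; by pigeonhole a subsequence of $(x_j)$ lies entirely in one of the two pieces, putting $\omega$ in $\Sigma_1\stint\Sigma_3$ or in $\Sigma_2\stint\Sigma_3$, while the reverse inclusion is immediate since a witnessing sequence for $\Sigma_i\stint\Sigma_3$ also witnesses membership for the union. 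I would also remark that the purely local criterion $\supp(\link_\omega\cF)-\supp(\link_\omega\cF')=\RR^n$ does \emph{not} by itself give this, since $(A\cup B)-C=\RR^n$ need not force $A-C=\RR^n$ or $B-C=\RR^n$; it is local balancedness, through the displacement rule, that excludes this pathology.

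\textbf{Where the work is.} Everything above except one point is bookkeeping with links and lattices. The substantive ingredient, used for the support part of~(1), is the claim that $\lim_{t_2,t_3\to0}\Sigma_1\cap(\Sigma_2+t_2)\cap(\Sigma_3+t_3)$ is independent of the relative rates at which $t_2$ and $t_3$ tend to $0$ — equivalently, that the fan displacement rule is compatible with iteration (together with the parallel statement for multiplicities). This is the heart of the matter; I would cite \cite{stableIntersection} for it rather than reprove it here.
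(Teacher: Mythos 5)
The first thing to say is that the paper contains no proof of this lemma to compare you against: it is one of the six statements for which the authors explicitly write that they ``refer to the upcoming paper \cite{stableIntersection},'' so the intended proof lives entirely in that companion work. Your sketch follows the route one would expect that reference to take — everything funnelled through the fan displacement rule — and several of your reductions are the right ones to isolate: reducing part (1) to rate-independence of the iterated limit $\Sigma_1\cap(\Sigma_2+t_2)\cap(\Sigma_3+t_3)$ as $t_2,t_3\to 0$, and, for part (2), the pigeonhole argument on a witnessing sequence together with your correct observation that the paper's link criterion alone cannot work, since $(A\cup B)-C=\RR^n$ does not force $A-C=\RR^n$ or $B-C=\RR^n$.

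The substantive criticism is that, as written, your proposal relocates rather than supplies the content. The paper \emph{defines} $\cF_1\stint\cF_2$ by the Minkowski-difference-of-links criterion; you work throughout with the displacement description $\lim_{t\to 0}\bigl(\supp\cF_1\cap(\supp\cF_2+t)\bigr)$ for generic $t$, and the equivalence of these two descriptions for locally balanced (not necessarily balanced) fans is itself one of the facts established only in \cite{stableIntersection} — it is needed even for your part (2) argument, in the step where a limit of translated intersection points along the sequence $s_jt$ is converted back into membership in $\Sigma_1\stint\Sigma_3$. You then cite the same reference for the rate-independence that you yourself identify as the heart of part (1), and the multiplicity claim in part (1) (order-independence of the lattice-index bookkeeping under transversality) is asserted rather than proved. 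None of these steps is wrong, and the scaffolding you build around them is sound and would be useful in a self-contained treatment; but in its present form the proposal proves essentially nothing beyond what the paper already outsources, and completing it would require proving the displacement rule for locally balanced fans, its compatibility with iteration, and the index identity — which is precisely the content of \cite{stableIntersection}.
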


\begin{lem}
\label{lem:computingstableintersections}
For locally balanced fans $\cF_1$ and $\cF_2$
$$
\supp(\cF_1 \stint \cF_2) = \mathop{\bigcup_{C_1 \in \cF_1, C_2 \in \cF_2}}_{\codim(C_1 + C_2) = 0} C_1 \cap C_2 .
$$
\end{lem}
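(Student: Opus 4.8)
The plan is to read off the support of the stable intersection directly from its definition: $\omega\in\supp(\cF_1\stint\cF_2)$ precisely when $\supp(\link_\omega\cF_1)-\supp(\link_\omega\cF_2)=\RR^n$. The structural fact to exploit is that $\supp(\link_\omega\cF_i)$ is a finite union of cones, one for each maximal cone $C_i$ of $\cF_i$ containing $\omega$, namely $\link_\omega(C_i)=C_i+\spann(\tau_i)$, where $\tau_i$ is the carrier of $\omega$ in $\cF_i$; each such cone is a full-dimensional subcone of $\spann(C_i)$, hence has the same linear span as $C_i$. Also recall that for cones $C_1,C_2$ the condition $\codim(C_1+C_2)=0$ is equivalent to $\spann(C_1)+\spann(C_2)=\RR^n$, since the linear span of the Minkowski sum of two cones is the sum of their linear spans.

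The inclusion $\subseteq$ is the easy one. If $\omega\in\supp(\cF_1\stint\cF_2)$, then $\RR^n$ is exhibited as the finite union of the polyhedral cones $\link_\omega(C_1)-\link_\omega(C_2)$ over pairs of maximal cones $C_i\ni\omega$. A finite union of polyhedral cones equals $\RR^n$ only if at least one of them is $n$-dimensional, and the linear span of $\link_\omega(C_1)-\link_\omega(C_2)$ is exactly $\spann(C_1)+\spann(C_2)$; so some such pair satisfies $\spann(C_1)+\spann(C_2)=\RR^n$, i.e.\ $\codim(C_1+C_2)=0$, and since $\omega\in C_1\cap C_2$ this puts $\omega$ on the right-hand side.

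For $\supseteq$ I would first reduce to intersecting a single fan with a linear subspace. Given $\omega\in C_1\cap C_2$ with $\codim(C_1+C_2)=0$, enlarging $C_1,C_2$ to maximal cones preserves both hypotheses, so assume they are maximal and we must prove $\supp(\link_\omega\cF_1)-\supp(\link_\omega\cF_2)=\RR^n$. Passing to the (again locally balanced) product fan $\mathcal F=\cF_1\times\cF_2$ in $\RR^n\times\RR^n$ and the diagonal $\Delta$, and using $\supp(\link_{(\omega,\omega)}\mathcal F)=\supp(\link_\omega\cF_1)\times\supp(\link_\omega\cF_2)$, the target equality is equivalent to $(\omega,\omega)\in\supp(\mathcal F\stint\Delta)$, while $\codim(C_1+C_2)=0$ becomes $\spann(C_1\times C_2)+\Delta=\RR^{2n}$. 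So it suffices to show: if $\mathcal F$ is a locally balanced fan in $\RR^N$, $L$ a linear subspace, $C$ a cone of $\mathcal F$ with $\spann(C)+L=\RR^N$, and $\omega\in C\cap L$, then $\supp(\link_\omega\mathcal F)+L=\RR^N$. I would prove this by induction on $\codim L$. The case $\codim L=0$ is vacuous. The case $\codim L=1$ is the heart: writing $L=\{\ell=0\}$, one has $\supp(\link_\omega\mathcal F)+L=\RR^N$ iff $\ell$ takes both signs on $\supp(\link_\omega\mathcal F)$; it takes one sign because $C\subseteq\supp(\link_\omega\mathcal F)$ is not contained in $L$, and it takes the opposite sign by local balancedness — this is the analogue, for codimension one, of the fact that a locally balanced fan of full dimension $N$ in $\RR^N$ has support all of $\RR^N$ (its support has empty topological boundary, because local balancedness forces every codimension-one face to be flanked by maximal cones on both sides). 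For $\codim L\geq2$ write $L=L_1\cap L_2$ with $\codim L_1=1$, $\codim L_2=\codim L-1$, $L_1+L_2=\RR^N$; then $L_1\stint L_2=L$, so by associativity of stable intersection (Lemma~\ref{lem:stableAssociative}) together with locality of the link operation, the statement for $(\mathcal F,L)$ reduces to the codimension-one case applied to $(\mathcal F,L_1)$ followed by the inductive hypothesis applied to $(\mathcal F\stint L_1,L_2)$. Full details are in \cite{stableIntersection}.

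The step I expect to be the main obstacle is exactly the reduction of the $\codim L\geq 2$ case to lower codimension: one must arrange the splitting $L=L_1\cap L_2$ so that the locally balanced fan $\mathcal F\stint L_1$, whose maximal cones are the sets $C'\cap L_1$ with $\spann(C')+L_1=\RR^N$, still contains a cone through $\omega$ whose linear span together with $L_2$ is all of $\RR^N$ — which amounts to choosing $L_1$ in sufficiently general position relative to the cone $C$ and the point $\omega$, so that $C\cap L_1$ spans $\spann(C)\cap L_1$. The same difficulty (and the delicate use of local balancedness) resurfaces if one instead tries to argue directly that the projection of $\supp(\link_\omega\mathcal F)$ to $\RR^N/L$ is onto: the image of a fan is not a fan, so one has to pass to a refined fan structure and keep track of which codimension-one faces remain codimension-one after projection in order to run the empty-boundary argument in the quotient space.
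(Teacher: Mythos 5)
First, a point of comparison: this paper never proves Lemma~\ref{lem:computingstableintersections} --- it is one of the six statements whose proofs are explicitly deferred to the companion paper \cite{stableIntersection} --- so your argument has to stand on its own rather than be measured against an internal proof. Its easy half does stand: for ``$\subseteq$'' you correctly write $\supp(\link_\omega\cF_i)$ as the finite union of the tangent cones $\link_\omega(C_i)$ over maximal $C_i\ni\omega$, observe that each difference $\link_\omega(C_1)-\link_\omega(C_2)$ spans $\spann(C_1)+\spann(C_2)$, and use that a finite union of polyhedral cones covering $\RR^n$ must contain a full-dimensional member. The diagonal reduction of ``$\supseteq$'' to a single locally balanced fan $\mathcal F$ and a subspace $L$ is also correct: $(A\times B)-\Delta=\RR^{2n}$ iff $A-B=\RR^n$, and $\spann(C_1\times C_2)+\Delta=\RR^{2n}$ iff $\spann(C_1)+\spann(C_2)=\RR^n$.

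The genuine gap is the $\codim L=1$ case, which is exactly where local balancedness must be used and which you only assert ``by analogy'' with the full-dimensional empty-boundary fact before deferring ``full details'' to \cite{stableIntersection}. The link $\link_\omega\mathcal F$ is in general a fan of positive codimension in $\RR^N$, where ``flanked by maximal cones on both sides'' and ``empty topological boundary'' are meaningless (every point of a lower-dimensional support is a boundary point). What is actually needed is the lemma that a pure, locally balanced fan contained in a closed halfspace $\{\ell\ge 0\}$ lies in $\{\ell=0\}$; this is true, but it is the real content of the statement, and it does not follow from a one-line appeal to the balancing condition, because balancing only constrains codimension-one faces while the cone on which $\ell$ is positive may meet $\{\ell=0\}$ only in faces of codimension at least two --- one needs an induction on dimension via links, or a generic-path argument. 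By contrast, the obstacle you flag in the inductive step is surmountable: since $\spann(C)+L=\RR^N$ and $\codim L\ge 2$, the image of $C$ in $\RR^N/L$ spans a space of dimension at least two, so some functional vanishing on $L$ changes sign on $C$; its kernel $L_1$ then meets the relative interior of $C$, giving $\spann(C\cap L_1)=\spann(C)\cap L_1$ and hence $\spann(C\cap L_1)+L_2=\RR^N$, after which the step goes through using Lemma~\ref{lem:stableAssociative}, Corollary~\ref{cor:linkst} and Proposition~\ref{prop:stabledimension} (whose proofs are, of course, also in \cite{stableIntersection}). So your reduction scheme is sound, but as written the proof is missing its heart, not its periphery.
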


\begin{cor}
\label{cor:linkst}
For locally balanced fans $\cF_1$ and $\cF_2$
$$\link_\omega(\cF_1)\cap_{st} \link_\omega(\cF_2)=\link_\omega(\cF_1 \cap_{st} \cF_2).$$
\end{cor}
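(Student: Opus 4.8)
The plan is to show that the two fans have the same support and that their cones and multiplicities match one for one; the support step carries the real content. For the support I would use two ingredients: the support description of $\stint$ built into the definition (equivalently Lemma~\ref{lem:computingstableintersections}), and an \emph{iterated link identity}: if $\cF$ is locally balanced, $\omega\in\supp(\cF)$ and $v\in\supp(\link_\omega(\cF))$, then $\link_{\omega+\varepsilon v}(\cF)=\link_v(\link_\omega(\cF))$ for all sufficiently small $\varepsilon>0$. This identity is immediate from unwinding definitions: $u\in\link_{\omega+\varepsilon v}(\cF)$ iff $\omega+\varepsilon v+\delta u\in\supp(\cF)$ for small $\delta>0$, and writing $\omega+\varepsilon v+\delta u=\omega+\varepsilon(v+(\delta/\varepsilon)u)$ this says, for $\varepsilon$ small, that $v+(\delta/\varepsilon)u\in\supp(\link_\omega(\cF))$, i.e.\ $u\in\link_v(\link_\omega(\cF))$; since a fan has finitely many cones the thresholds on $\varepsilon$ and $\delta$ can be chosen uniformly. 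I would also record (or cite from \cite{stableIntersection}) that a link of a locally balanced fan is locally balanced, so that the support description of $\stint$ may legitimately be applied to $\link_\omega(\cF_1)$ and $\link_\omega(\cF_2)$.

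With these tools the support equality is a short chain. By definition of the link of a fan at a point, $\supp(\link_\omega(\cF_1\stint\cF_2))=\{v:\omega+\varepsilon v\in\supp(\cF_1\stint\cF_2)\text{ for all small }\varepsilon>0\}$. Applying the support description of $\stint$ at the point $\omega+\varepsilon v$ turns this into $\{v:\supp(\link_{\omega+\varepsilon v}(\cF_1))-\supp(\link_{\omega+\varepsilon v}(\cF_2))=\RR^n\}$; rewriting each link by the iterated link identity gives $\{v:\supp(\link_v(\link_\omega(\cF_1)))-\supp(\link_v(\link_\omega(\cF_2)))=\RR^n\}$, which by the support description of $\stint$ applied to the locally balanced fans $\link_\omega(\cF_1)$ and $\link_\omega(\cF_2)$ is exactly $\supp(\link_\omega(\cF_1)\stint\link_\omega(\cF_2))$.

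It remains to match cones and multiplicities. The cones of $\cF_1\stint\cF_2$ through $\omega$ are the $C_1\cap C_2$ with $C_i\in\cF_i$, $\omega\in C_1\cap C_2$, and $\codim(C_1+C_2)=0$, and $\link_\omega(C_1\cap C_2)=\link_\omega(C_1)\cap\link_\omega(C_2)$. Since $\spann(\link_\omega(C_i))=\spann(C_i)$ (the tangent cone of $C_i$ at a point of $C_i$ has the same linear span as $C_i$), the condition $\codim(C_1+C_2)=0$ is equivalent to $\codim(\link_\omega(C_1)+\link_\omega(C_2))=0$, and every cone of $\link_\omega(\cF_i)$ is $\link_\omega(C_i)$ for some $C_i\ni\omega$ in $\cF_i$; this yields a bijection between the cones of $\link_\omega(\cF_1\stint\cF_2)$ and those of $\link_\omega(\cF_1)\stint\link_\omega(\cF_2)$. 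For multiplicities, $\mult_{\link_\omega(C_i)}(\link_\omega(\cF_i))=\mult_{C_i}(\cF_i)$ by the convention that links inherit multiplicities, and $\ZZ^n\cap\RR\link_\omega(C_i)=\ZZ^n\cap\RR C_i$, so the sum defining $\mult_\omega(\link_\omega(\cF_1)\stint\link_\omega(\cF_2))$ agrees term by term with the one defining $\mult_\omega(\cF_1\stint\cF_2)$, once one checks that the pairs $(\link_\omega(C_1),\link_\omega(C_2))$ selected by a generic direction on the left are exactly the pairs $(C_1,C_2)$ selected on the right.

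The main obstacle I expect is keeping the ``for all sufficiently small $\varepsilon$'' quantifiers uniform across the three fans $\cF_1$, $\cF_2$, and $\cF_1\stint\cF_2$ at once, so that a single threshold computes $\link_{\omega+\varepsilon v}$ of all of them and the iterated link identity can be used simultaneously; closely tied to this is the final multiplicity bookkeeping, namely that the generic vector used to select contributing cone pairs can be taken to be the same on both sides and that the requisite balancedness of $\cF_1$, $\cF_2$ (hence of their links) is in force.
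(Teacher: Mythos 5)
The first thing to say is that the paper contains no proof of Corollary~\ref{cor:linkst} to compare against: it is one of the six statements about stable intersections whose proofs are explicitly deferred to the companion paper \cite{stableIntersection}. Judged on its own merits, your argument is essentially correct and is the natural one. The iterated link identity $\link_{\omega+\varepsilon v}(\cF)=\link_v(\link_\omega(\cF))$ for small $\varepsilon$ (uniform by finiteness and closedness/convexity of the cones) is the right engine, and since both the support and the pointwise multiplicity of a stable intersection are defined through $\link$ of $\cF_1$ and $\cF_2$ at a point, the support equality is exactly the unwinding of definitions you give.

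Two steps deserve tightening, though neither is a fundamental gap. First, in the cone-matching step you take the cones of $\cF_1\stint\cF_2$ through $\omega$ to be the intersections $C_1\cap C_2$ with $\codim(C_1+C_2)=0$; that is the support description of Lemma~\ref{lem:computingstableintersections}, not the paper's definition of the fan, whose membership condition is $\supp(\link_{C_1}(\cF_1))-\supp(\link_{C_2}(\cF_2))=\RR^n$. If you want equality of fan structures rather than of supports with multiplicities, you should transfer this latter condition, which your identity also handles: for $v$ in the relative interior of $\link_\omega(C_i)$ one has $\omega+\varepsilon v$ in the relative interior of $C_i$, hence $\link_{\link_\omega(C_i)}(\link_\omega(\cF_i))=\link_{C_i}(\cF_i)$. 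Second, the multiplicity comparison is phrased as an equality of $\mult_\omega$'s, but the multiplicities of the two fans in the corollary must be compared at points $v$ of their common support (equivalently on maximal cones), not at $\omega$. The cleaner route, which makes most of your cone-pair bookkeeping unnecessary, is: for $v$ in the relative interior of a maximal cone, the sum defining $\mult_v\bigl(\link_\omega(\cF_1)\stint\link_\omega(\cF_2)\bigr)$ is term by term the sum defining $\mult_{\omega+\varepsilon v}(\cF_1\stint\cF_2)$ once one knows $\link_v(\link_\omega(\cF_i))=\link_{\omega+\varepsilon v}(\cF_i)$ with inherited multiplicities and equal spans (so equal lattice indices), using the same fixed generic $\omega'$ on both sides; and $\mult_{\omega+\varepsilon v}(\cF_1\stint\cF_2)$ is precisely the multiplicity inherited by the corresponding cone of $\link_\omega(\cF_1\stint\cF_2)$, since $\omega+\varepsilon v$ lies in the relative interior of that maximal cone for small $\varepsilon$. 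With these adjustments your proof is complete and in the spirit of what the deferred reference supplies.
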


\begin{pro}
\label{prop:stabledimension}
The stable intersection of two locally balanced fans is either empty or a locally balanced fan whose codimension is the sum of the codimensions.
\end{pro}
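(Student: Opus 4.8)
The goal is to prove, assuming $\cF_1\stint\cF_2$ is nonempty, three things: that it is a polyhedral fan, that it is pure of codimension $\codim(\cF_1)+\codim(\cF_2)$, and that it is locally balanced. Write $D_i=\dim(\cF_i)$ and $c_i=n-D_i$. One inequality is immediate from Lemma~\ref{lem:computingstableintersections}: the support of $\cF_1\stint\cF_2$ is the union of the cones $C_1\cap C_2$ over pairs $C_1\in\cF_1$, $C_2\in\cF_2$ with $\spann(C_1)+\spann(C_2)=\RR^n$, and any such cone has dimension at most $\dim(\spann(C_1)\cap\spann(C_2))=\dim(C_1)+\dim(C_2)-n\le D_1+D_2-n$. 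Hence $\codim(\cF_1\stint\cF_2)\ge c_1+c_2$, and everything reduces to the reverse inequality (purity) together with the fan and balancedness assertions.

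For these I would use the description of the stable intersection by a generic translation, which I would first record (or cite from \cite{stableIntersection}). For all sufficiently small generic $v\in\RR^n$ the common refinement of $\cF_1$ and the translated fan $\cF_2+v$ is transverse: every nonempty cell $C_1\cap(C_2+v)$ has $\spann(C_1)+\spann(C_2)=\RR^n$, so (since the $\cF_i$ are pure) it is a face of $\widehat C_1\cap(\widehat C_2+v)$ for maximal $\widehat C_i\supseteq C_i$, and the latter has dimension exactly $D_1+D_2-n$; thus $\supp(\cF_1)\cap(\supp(\cF_2)+v)$ is pure of dimension $D_1+D_2-n$ (or empty). There are only finitely many pairs of cones and hence finitely many combinatorial types of these refinements as $v$ runs over small generic vectors, so the refinement stabilizes as $v\to 0$; its support approaches $\supp(\cF_1\stint\cF_2)$, which by the stabilization is therefore pure of dimension $D_1+D_2-n$. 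This gives $\codim(\cF_1\stint\cF_2)=c_1+c_2$, and the stabilized combinatorics simultaneously exhibits $\cF_1\stint\cF_2$ as a subcomplex of the common refinement of $\cF_1$ and $\cF_2$, hence as a fan.

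For local balancedness I would work at a codimension one face $\tau$ of $\cF_1\stint\cF_2$. By Corollary~\ref{cor:linkst} we have $\link_\tau(\cF_1\stint\cF_2)=\link_\tau(\cF_1)\stint\link_\tau(\cF_2)$, and the link of a locally balanced fan is again locally balanced and pure of the same dimension, since a codimension one face of $\link_\tau(\cF_i)$ is the link at $\tau$ of a codimension one face of $\cF_i$ (where $\cF_i$ positively spans a line) and links compose. Modding out by the common lineality, which contains $\spann(\tau)$, we are reduced to the case of a one dimensional stable intersection in a lower dimensional space, where local balancedness just says the support is a union of full lines. The heart of the matter, and the step I expect to be the main obstacle, is precisely this non-degeneracy: one must show that a nonempty stable intersection of locally balanced fans genuinely attains dimension $D_1+D_2-n$ rather than collapsing onto a lower dimensional face. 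Local balancedness is essential here --- without it a transverse pair of maximal cones can meet only at the origin --- and the cleanest route is a fan displacement lemma: use the balancing to replace a maximal cone that ``caps off'' an intersection by the cone on its other side and iterate, or, equivalently, show that for small generic $v$ the set $\supp(\cF_1)\cap(\supp(\cF_2)+v)$ is nonempty near every point of $\supp(\cF_1\stint\cF_2)$, after which the dimension count of the previous paragraph transfers across the limit. The remaining verifications --- transversality of the generic translate, the combinatorial stabilization as $v\to 0$, and local balancedness and purity of links of locally balanced fans --- are routine.
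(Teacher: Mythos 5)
First, a point of comparison: the paper itself contains no proof of this proposition --- it is one of the six statements about stable intersections whose proofs are explicitly deferred to the companion paper \cite{stableIntersection} --- so there is no in-paper argument to measure your proposal against, and I can only assess it on its own terms.

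Your easy inequality is fine, and your overall framework (generic displacement, fan-displacement transversality, passing to the limit as the displacement shrinks, and reducing local balancedness to links via Corollary~\ref{cor:linkst}) is the standard and reasonable one. But there is a genuine gap exactly where you yourself locate ``the heart of the matter,'' and the patch you propose does not close it. You claim that once one knows $\supp(\cF_1)\cap(\supp(\cF_2)+\varepsilon v)$ is nonempty near every point $\omega$ of $\supp(\cF_1\stint\cF_2)$, ``the dimension count of the previous paragraph transfers across the limit.'' It does not, at least not for free: since the $\cF_i$ are fans, $\supp(\cF_1)\cap(\supp(\cF_2)+\varepsilon v)=\varepsilon\bigl(\supp(\cF_1)\cap(\supp(\cF_2)+v)\bigr)$, so the limit as $\varepsilon\to 0$ is the union of the recession cones $\widehat C_1\cap\widehat C_2$ of the cells $\widehat C_1\cap(\widehat C_2+v)$, and a $d$-dimensional cell of the displaced intersection can have a recession cone of strictly smaller dimension (a bounded or slab-like cell). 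Hence purity of the displaced intersection does not transfer to purity of the limit, and nonemptiness near $\omega$ only places $\omega$ in some $\widehat C_1\cap\widehat C_2$ for a transverse maximal pair, not in a cell of dimension $\dim\cF_1+\dim\cF_2-n$. Showing that the stable intersection actually attains this dimension at every point of its support is precisely where local balancedness must be used in an essential, iterative way; your one-clause suggestion of replacing a maximal cone that ``caps off'' the intersection by cones across a ridge is indeed the right continuation idea, but it is not carried out, and making it work is essentially the whole proof. The same unproven core is what you would need to conclude that the one-dimensional link stable intersections in your balancedness reduction are unions of full lines. Finally, note that the two results you invoke, Lemma~\ref{lem:computingstableintersections} and Corollary~\ref{cor:linkst}, are themselves among the statements deferred to \cite{stableIntersection}, so a self-contained proof would need to establish (or avoid) them as well.
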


\begin{lem}
\label{lem:slicingbinomial}
Let $I$ be an ideal in $\kk[x_1,x_2,\dots,x_n]$.  Then
$$
\supp(\cT(I)) \stint \{x: x_1 = 0\} = \supp(\cT(\langle I \rangle + \langle x_1 - \alpha \rangle))
$$
where $\langle I \rangle$ is the ideal in $\kk(\alpha)[x_1,x_2,\dots,x_n]$ generated by $I$.
\end{lem}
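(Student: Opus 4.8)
The plan is to establish both inclusions between the two sets, working with rational points and using the Fundamental Theorem of Tropical Geometry over the field $K = \kk(\alpha)$ (or a suitable Puiseux-series extension) so that concrete geometric points witness membership in tropicalizations. The key observation is that the ideal $\langle I \rangle + \langle x_1 - \alpha \rangle$ over $\kk(\alpha)$ computes exactly the fiber of $V(I)$ over a \emph{generic} value of the first coordinate, and genericity is what turns an ordinary intersection into a stable intersection. First I would fix notation: write $J = \langle I \rangle + \langle x_1 - \alpha\rangle \subseteq \kk(\alpha)[x_1,\dots,x_n]$ and let $H = \{x : x_1 = 0\}$. Since both $\cT(I)$ and $H$ are rational fans, and $H$ is a coordinate subspace (hence locally balanced of codimension one), it suffices by density to check that the two supports agree on $\QQ^n$.

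For the inclusion $\supp(\cT(J)) \subseteq \supp(\cT(I)) \stint H$, take $\omega \in \cT(J) \cap \QQ^n$. By Kapranov/the Fundamental Theorem there is a point $y$ in $V(J)$ over a valued extension field with $\val(y) = \omega$; since $x_1 - \alpha \in J$, we have $\val(y_1) = \val(\alpha) = 0$, so $\omega_1 = 0$ and $\omega \in H$. Also $y \in V(I)$ after the obvious base change, so $\omega \in \cT(I) \cap H$. To upgrade $\omega \in \cT(I) \cap H$ to $\omega \in \cT(I) \stint H$, I would invoke Lemma~\ref{lem:computingstableintersections}: I need a maximal cone $C_1 \in \cT(I)$ through $\omega$ and the cone $C_2 = H$ with $\codim(C_1 + H) = 0$, i.e.\ $C_1$ is not contained in (any translate parallel to) the hyperplane $x_1 = 0$. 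Equivalently, using Corollary~\ref{cor:linkst} and passing to the link at $\omega$, it suffices to show $\link_\omega(\cT(I))$ is not contained in $H$, which amounts to showing the first coordinate is nonconstant on $V(\init_\omega(I))$ — and that follows because $x_1$ is a unit (a variable) that does not become constant: more precisely, $\init_\omega(J) \supseteq \init_\omega(I) + \langle x_1 - \alpha\rangle$ having no monomial forces $\init_\omega(I)$ to have no monomial after setting $x_1$ generic, which is exactly the stable-intersection condition. This is essentially a restatement of Lemma~\ref{lem:slicingbinomial} in the "binomial slicing" language, so I expect to cite the companion paper \cite{stableIntersection} for the precise dictionary between stable intersection with $x_1 = 0$ and genericity of $x_1 = \alpha$.

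For the reverse inclusion $\supp(\cT(I)) \stint H \subseteq \supp(\cT(J))$, take $\omega$ in the stable intersection; then $\omega \in \cT(I)$, $\omega_1 = 0$, and by Lemma~\ref{lem:computingstableintersections} there is a maximal cone $C_1$ of $\cT(I)$ containing $\omega$ whose linear span surjects onto the $x_1$-axis. Lift $\omega$ to a point $y \in V(I)$ over a valued field with $\val(y) = \omega$ and $\val(y_1) = 0$; the surjectivity of the span guarantees (after a small perturbation of $\omega$ inside $C_1$, or by choosing the lift appropriately) that we may arrange $y_1$ to have value $0$ and in fact to specialize to any prescribed generic constant, so that rescaling the torus coordinate lets us assume $y_1 = \alpha$ up to the field automorphism sending the residue of $y_1$ to $\alpha$. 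Then $y \in V(J)$, so $\omega \in \cT(J)$. The main obstacle is this last point: making rigorous the step "the stable-intersection transversality condition lets us choose the lift so that $y_1$ equals the transcendental $\alpha$ rather than merely having valuation zero." I would handle it by working over $\kk(\alpha)((t^{1/\infty}))$ (Puiseux series with coefficients in $\kk(\alpha)$), noting that transversality of $C_1$ to $x_1 = 0$ means $x_1$ restricted to $V(\init_\omega(I))$ is a nonconstant unit, hence takes a value algebraically independent from the rest of the data, which we may rename $\alpha$; the multiplicity bookkeeping (that degrees match, so the fan structures and not just supports could in principle be compared) is not needed since the statement is only about supports. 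I would phrase the argument to lean on the six foundational facts quoted from \cite{stableIntersection} — in particular Lemma~\ref{lem:computingstableintersections} and Corollary~\ref{cor:linkst} — so that the proof reduces to the elimination-theoretic content of Lemma~\ref{lem:incidence}-style lifting plus the observation that intersecting with $\langle x_1 - \alpha \rangle$ over the function field is the algebraic shadow of stable intersection with a coordinate hyperplane.
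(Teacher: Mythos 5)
First, a point of context: the paper itself contains no proof of Lemma~\ref{lem:slicingbinomial} --- it is one of the six statements explicitly deferred to the companion paper \cite{stableIntersection} --- so there is no in-paper argument to compare yours with, and your proposal has to stand on its own. Judged that way, the skeleton is sensible: two inclusions, lifting via the Fundamental Theorem over a valued extension of $\kk(\alpha)$, and the observation that for $H=\{x\colon x_1=0\}$ the stable-intersection condition at $\omega$ is exactly $\supp(\link_\omega\cT(I))\not\subseteq H$. But both inclusions have genuine gaps. For $\supp(\cT(J))\subseteq\supp(\cT(I))\stint H$, your key step --- that monomial-freeness of $\init_\omega(I)+\langle x_1-\alpha\rangle$ ``is exactly the stable-intersection condition'' --- is, as you say yourself, a restatement of the lemma at the level of initial ideals, and you propose to cite \cite{stableIntersection} for it; that makes the argument circular as a standalone proof (it reduces to the same citation the paper makes). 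The equivalence does admit a direct argument, e.g.\ via $\supp(\link_\omega\cT(I))=\supp(\cT(\init_\omega I))$ and tropical elimination applied to the coordinate projection $x\mapsto x_1$: the link lies in $H$ iff $x_1$ takes only finitely many values on $V(\init_\omega I)$, all algebraic over $\kk$, iff the fiber over the transcendental $\alpha$ is empty, iff $\init_\omega(I)+\langle x_1-\alpha\rangle$ contains a monomial --- but that argument has to be supplied, not outsourced.

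For the reverse inclusion the justification offered is incorrect as stated: ``$x_1$ is a nonconstant unit on $V(\init_\omega(I))$, hence takes a value algebraically independent from the rest of the data, which we may rename $\alpha$'' fails because a nonconstant coordinate function on a variety defined over $\kk$ takes values in $\overline{\kk}$, all of which are algebraic over $\kk$; transcendental values only appear after base change. What transversality actually gives is dominance of $x_1$ on some irreducible component, so that after extending scalars to $\overline{\kk(\alpha)}$ the fiber over $\alpha$ is nonempty (the finitely many omitted values being algebraic over $\kk$). Even granting that, certifying $\omega\in\cT(J)$ requires a point $y\in V(I)$ over a valued extension of $\kk(\alpha)$ with $\val(y)=\omega$ and $y_1$ equal to $\alpha$ \emph{exactly}, not merely of valuation zero or with leading coefficient $\alpha$; the plain Fundamental Theorem prescribes only $\val(y)$, so you need either a refined lifting statement (lifting with prescribed initial point in $V(\init_\omega I)$) combined with a carefully justified automorphism/valuation-transport argument forcing $y_1=\alpha$ on the nose, or a different route altogether (e.g.\ the Gr\"obner-theoretic genericity argument hinted at after Proposition~\ref{prop:special}: only finitely many polynomials in $\alpha$ can spoil the computation). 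As written, this step would not go through, and it is precisely where the real content of the lemma lies.
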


\begin{defn}
\label{def:specialized}
Let $S=(S_1,\dots,S_k)$ where each $S_i\subseteq \{1,\dots,m_i\}$ represent a choice of points in the configuration $\cA$. The coefficients of the monomials indexed by $S$ are called \emph{specialized}. Let $U_i:=\{x\in\RR^{m_i}:x_j=0 \text{ for all } \forall j\in S_i\}$ and $U_S :=\cprod_{i=1}^k U_i$. We define the \emph{specialized tropical resultant} as $$\cT\cR_S(A):=\cT\cR(\cA)\cap_{st}\{U_S\}.$$
\end{defn}

We will use the following proposition to justify the word ``specialized.'' Let $I$ be the ideal of $\cR(\cA)$ and $J$ be a new ideal generated by $I$ together with the binomials $c_j-\gamma_j$ 
 for specialized coefficients $c_j$, where $\gamma_j$'s are parameters. We define the specialized resultant variety $\cR_S(\cA):=V(J)\subseteq\cprod_{i=1}^k(K^*)^{m_i}$ as the variety defined by $J$ where $K$ is the field of rational functions in $\gamma_j$'s with coefficients in $\CC$.

When all the containments $S_i \subset \{1,\dots, m_i\}$ are strict, i.e.\ not all coefficients are specialized in any of the polynomials $f_i$, then the variety $\cR_S(\cA)$ is irreducible.  To see this, consider the {\em specialized incidence variety} $W_S$ cut out by the polynomials $f_1, \dots, f_k$ with some, but not all, coefficients specialized in each $f_i$.  For a fixed $x\in(\CC^*)^n$ each $f_i$ gives an affine constraint on the the non-specialized coefficients in $f_i$. Such constraints are solvable since each $x_j\not=0$ and they are simultaneously solvable since they concern different sets of coefficients. Hence $W_S$ is a vector bundle over $(\CC^*)^n$ and is irreducible.  Therefore its projection $\cR_S(\cA)$ is also irreducible.  

%When all the containments $S_i \subset \{1,\dots, m_i\}$ are strict, i.e.\ not all coefficients are specialized in any of the polynomials $f_i$, then the variety $\cR_S(\cA)$ is irreducible.  To see this, consider the {\em specialized incidence variety} $W_S$ cut out by the polynomials $f_1, \dots, f_k$ with some, but not all, coefficients specialized in each $f_i$.  Then $W_S$ is a vector bundle over $(\CC^*)^n$ and is irreducible.  Therefore its projection $\cR_S(\cA)$ is also irreducible.  

In general, for a prime ideal $I \subset \kk[x_1, \dots, x_n]$, with $\kk$ algebraically closed, and a generic $\alpha$, specializing a variable $x_1$ to $\alpha$ may not preserve primality, i.e.\ the ideal $I_1:=I + \langle x_1 - \alpha\rangle\subseteq\overline{\kk(\alpha)}[x_1,\dots,x_n]$ need not be prime.  However, all its irreducible components have the same tropical variety.  To see this, note that $\cT(I_1)=\{0\}^1\times \cT(I_2)$ where $I_2:=I\subseteq\overline{\kk(x_1)}[x_2,\dots,x_n]$.  The ideal $I_3:=I\subseteq\kk(x_1)[x_2,\dots,x_n]$ is prime because $I$ remains prime under extension from $\kk[x_1]$ to $\kk(x_1)$, as primality is preserved under localization.  Deciding whether a point is in a tropical variety can be done with reduced Gr\"obner bases which are independent of the field extension, so we have $\{0\}^1\times \cT(I_3)=\{0\}^1\times \cT(I_2)=\cT(I_1)$.  Furthermore,  since $I_1$ is prime, by \cite[Proposition~4]{CartwrightPayne}, all irreducible components of $I_2$ have the same tropicalization.  Since the tropical varieties of the irreducible components of $I_3$ are the same as those of the irreducible components of $I_2$, the conclusion follows.

%In general, for a prime ideal $I \subset \kk[x_1, \dots, x_n]$, with $\kk$ algebraically closed, and a generic $\alpha \in \kk$, specializing a variable $x_1$ to $\alpha$ may not preserve primality, i.e.\ the ideal $I + \langle x_1 - \alpha\rangle$ need not be prime.  However, all its irreducible components have the same tropical variety.  To see this, note that the tropical varieties of the irreducible components of $I + \langle x_1 - \alpha \rangle$ are the same as the tropical varieties of the irreducible components of the extension of $I$ in $\overline{\kk(x_1)}[x_2,\dots,x_n]$.  The ideal remains prime under extension from $\kk[x_1]$ to $\kk(x_1)$, as primality is preserved under localization.  By \cite[Proposition~4]{CartwrightPayne}, all the irreducible components of the extension of $I$ in $\overline{\kk(x_1)}[x_2,\dots,x_n]$ have the same tropicalization.  
\begin{pro}
\label{prop:special}
 The tropicalization of $\cR_S(\cA)$ is $\cT\cR_S(A)$.
% Extending the valuation of $\CC\{t\}$ to $K\{t\}$ by assigning valuation $0$ to all $\gamma_j$ the tropicalization of $\cR_S(\cA)$ becomes $\cT\cR(\cA)\cap_{st}\{U\}$.
\end{pro}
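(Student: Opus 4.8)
The plan is to take as the starting point the identification $\cT\cR(\cA)=\supp(\cT(I))$ furnished by Theorem~\ref{thm:tropicalequalstropicalized}, where $I$ denotes the ideal of $\cR(\cA)$ in the Laurent ring $\CC[c^{\pm 1}]$, and then to transport the stable intersection that defines $\cT\cR_S(\cA)$ onto the algebraic side, one specialized coordinate at a time, by repeated application of Lemma~\ref{lem:slicingbinomial}.

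First I would rewrite $U_S$ as an iterated stable intersection of coordinate hyperplanes. Collecting the specialized indices into a set $S=\{s_1,\dots,s_\ell\}\subseteq\{1,\dots,m\}$ with the $s_r$ pairwise distinct, we have $U_S=H_{s_1}\cap\cdots\cap H_{s_\ell}$ where $H_s:=\{c\in\RR^m:c_s=0\}$. Because the $s_r$ are distinct, $H_{s_1}$ is transverse to $H_{s_2}\cap\cdots\cap H_{s_\ell}$ (their sum is all of $\RR^m$, since between them they contain every standard basis vector), so by Lemma~\ref{lem:computingstableintersections} the stable intersection $H_{s_1}\stint(H_{s_2}\cap\cdots\cap H_{s_\ell})$ is the ordinary intersection; iterating this and using associativity of stable intersection (Lemma~\ref{lem:stableAssociative}(1)) --- legitimate since $\cT\cR(\cA)$ is balanced, being the tropicalization of the irreducible variety $\cR(\cA)$, each $H_s$ is a linear space, and all intermediate stable intersections are locally balanced by Proposition~\ref{prop:stabledimension} --- we get
$$
\cT\cR_S(\cA)=\cT\cR(\cA)\stint U_S=\bigl(\cdots\bigl(\cT\cR(\cA)\stint H_{s_1}\bigr)\stint\cdots\bigr)\stint H_{s_\ell}.
$$

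Next I would walk down this chain with Lemma~\ref{lem:slicingbinomial}. Applied to $I$ and the coordinate $c_{s_1}$ it gives $\supp(\cT(I))\stint H_{s_1}=\supp(\cT(I^{(1)}))$, where $I^{(1)}:=\langle I\rangle+\langle c_{s_1}-\gamma_{s_1}\rangle$ is considered over $\CC(\gamma_{s_1})$; applied again to $I^{(1)}$ and $c_{s_2}$ --- a coordinate not specialized in the previous step, since $s_2\neq s_1$ --- it gives $\supp(\cT(I^{(1)}))\stint H_{s_2}=\supp(\cT(I^{(2)}))$ with $I^{(2)}=\langle I\rangle+\langle c_{s_1}-\gamma_{s_1},c_{s_2}-\gamma_{s_2}\rangle$ over $\CC(\gamma_{s_1},\gamma_{s_2})$; continuing through all $\ell$ specialized coordinates produces
$$
\cT\cR_S(\cA)=\supp\bigl(\cT\bigl(\langle I\rangle+\langle c_{s_r}-\gamma_{s_r}:1\le r\le\ell\rangle\bigr)\bigr),
$$
the ideal now taken over $K=\CC(\gamma_{s_1},\dots,\gamma_{s_\ell})$. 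Here I would note that Lemma~\ref{lem:slicingbinomial} is valid over any ground field and, because specializing a coordinate to a generic unit keeps the variety inside the torus, it applies equally to ideals of subvarieties of tori. But the ideal on the right is precisely $J$, so $\cT\cR_S(\cA)=\supp(\cT(J))$; and $\supp(\cT(J))$ is the tropical variety of $\cR_S(\cA)=V(J)$, since although $J$ need not be prime, the discussion preceding the proposition shows all irreducible components of $V(J)$ share this same tropical variety. This yields $\cT(\cR_S(\cA))=\cT\cR_S(\cA)$.

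I expect there to be no real obstacle here, only bookkeeping: one has to be sure that at the $r$-th application of Lemma~\ref{lem:slicingbinomial} the coordinate $c_{s_r}$ has not been touched before (automatic, as $S$ has distinct elements) and that the tower of extensions $\CC\subset\CC(\gamma_{s_1})\subset\cdots\subset K$ does not interfere (it does not, the lemma being insensitive to the ground field). If one wants the statement with multiplicities rather than just supports, the same chaining works, using the multiplicativity in the definition of $\mult_\omega(\cF_1\stint\cF_2)$ and the fact that coordinate subspaces carry multiplicity one; for the support statement asked here this refinement is unnecessary.
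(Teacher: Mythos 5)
Your proposal is correct and follows essentially the same route as the paper, whose proof simply cites Lemma~\ref{lem:slicingbinomial} together with the associativity statement of Lemma~\ref{lem:stableAssociative}(1); you have filled in exactly the intended bookkeeping (writing $U_S$ as an iterated stable intersection of coordinate hyperplanes, peeling off one specialized coordinate at a time over a growing field of parameters, and invoking the preceding discussion of irreducible components). No gap.
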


\begin{proof}
The statement follows from Lemmas~\ref{lem:slicingbinomial} and~\ref{lem:stableAssociative}(1).
\end{proof}

The computation of the tropicalization of $\cR_S(\cA)$ can be performed using Buchberger's algorithm as explained in \cite{BJSST} over the field of rational functions in the $\gamma_j$'s. During this computation finitely many polynomials in the $\gamma_j$'s appear as numerators and denominators of the coefficients. Substituting constant values for the $\gamma_j$'s will give the same computation unless one of these polynomials vanish. Hence specializing $\gamma_j$'s to values outside a hypersurface in $(\CC^*)^S$ will lead to a specialized tropical resultant variety. This explains the word ``specialized.''

If $\cT\cR_S(A)$ is nonempty, then its codimension can be computed using Proposition~\ref{prop:stabledimension} and the codimension formulas from Section~\ref{sec:codimension}. Thus it remains to give an algorithm for checking if the specialized resultant is empty.  Recall that $m := \sum_i m_i$ is the total number of points in $\cA$.

\begin{lem}
\label{lem:nonempty}
Let $\cA$ and $S$ be as in Definition~\ref{def:specialized}. Define the extended tuple $\cB=(B_1,\dots,B_k)$ where $B_i$ consists of $b_{i,j}= (a_{i,j} , v_{i,j} ) \in \ZZ^n \times \ZZ^{m-|S|}$, with $v_{i,j} \in\ZZ^{m-|S|}$ being $0$ if $j\in S_i$ and a standard basis vector otherwise. If the standard vector is chosen differently for every non-specialized coefficient then 
$$\cT\cR_S(A)\not=\emptyset \Leftrightarrow \cT\cR(\cB)=\RR^{m}.$$
\end{lem}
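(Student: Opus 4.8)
The plan is to prove that each side of the equivalence is, in turn, equivalent to surjectivity of the coordinate projection $\mathrm{pr}_S\colon\RR^{m}\to\RR^{S}$ restricted to the tropical resultant $\cT\cR(\cA)$; here $\RR^{S}=\prod_i\RR^{S_i}$ records the specialized coordinates, and $U_S=\ker(\mathrm{pr}_S)$ in the notation of Definition~\ref{def:specialized} (we identify $\prod_i\RR^{B_i}$ with $\prod_i\RR^{A_i}=\RR^m$ by the labels $(i,j)$, so $\mathrm{pr}_S$ makes sense on both sides).

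First I would treat the right-hand side. The configuration $\cB$ is built so that each of the $m-|S|$ new coordinates occurs in exactly one monomial of exactly one polynomial; this is precisely what the hypothesis ``the standard vector is chosen differently for every non-specialized coefficient'' buys us. Writing the incidence system of $\cB$ as $g_i=\sum_{j\in S_i}d_{ij}y^{a_{ij}}+\sum_{j\notin S_i}d_{ij}y^{a_{ij}}z_{ij}$, where the $z_{ij}$ for $j\notin S_i$ are distinct new variables, the monomial map given by $c_{ij}=d_{ij}$ for $j\in S_i$ and $c_{ij}=d_{ij}z_{ij}$ for $j\notin S_i$ is an isomorphism of tori identifying the incidence variety $W^{\cB}$ of $\cB$ with $W\times(\CC^*)^{m-|S|}$, where $W$ is the incidence variety of $\cA$ from~(\ref{eqn:incidenceVariety}), and carrying each $g_i$ to $f_i$. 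Chasing the coefficient projection through this isomorphism, a tuple $d$ lifts to $W^{\cB}$ if and only if some point of the solvable locus $Z\subseteq\cR(\cA)$ agrees with $d$ on the $S$-coordinates; hence $\cR(\cB)=\mathrm{pr}_S^{-1}\big(\overline{\mathrm{pr}_S(\cR(\cA))}\big)$ is the cylinder over $\overline{\mathrm{pr}_S(\cR(\cA))}$. By Theorem~\ref{thm:tropicalequalstropicalized}, the compatibility of tropicalization with products, and the compatibility with ``closure of a projection'' (the latter is \cite{SturmfelsTevelev}, already invoked for Theorem~\ref{thm:tropicalequalstropicalized}), this yields $\cT\cR(\cB)=\mathrm{pr}_S(\cT\cR(\cA))\times\RR^{m-|S|}$, so $\cT\cR(\cB)=\RR^{m}$ exactly when $\mathrm{pr}_S(\cT\cR(\cA))=\RR^{S}$.

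Next I would treat the left-hand side, where $\cT\cR_S(\cA)=\cT\cR(\cA)\stint\{U_S\}$ with $U_S=\ker(\mathrm{pr}_S)$. By Lemma~\ref{lem:computingstableintersections} this stable intersection is nonempty precisely when $\cT\cR(\cA)$ has a maximal cone $C$ with $\codim(C+U_S)=0$, equivalently with $\mathrm{pr}_S(C)$ full-dimensional in $\RR^S$, equivalently when $\mathrm{pr}_S(\cT\cR(\cA))$ is full-dimensional. Now $\mathrm{pr}_S(\cT\cR(\cA))$ is the tropicalization of the irreducible variety $\overline{\mathrm{pr}_S(\cR(\cA))}$, so by the Bieri--Groves theorem it is full-dimensional if and only if it is all of $\RR^{S}$. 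Hence $\cT\cR_S(\cA)\neq\emptyset$ if and only if $\mathrm{pr}_S(\cT\cR(\cA))=\RR^{S}$, and combined with the previous paragraph this proves the lemma. (In algebraic language the same chain reads: $\cR_S(\cA)=V(J)$ is the generic fibre of $\mathrm{pr}_S\colon\cR(\cA)\to(\CC^*)^{S}$, hence nonempty iff $\mathrm{pr}_S$ is dominant on $\cR(\cA)$; the cylinder description shows $\cR(\cB)=\prod_i(\CC^*)^{B_i}$ iff the same map is dominant; then apply Theorem~\ref{thm:tropicalequalstropicalized} and Proposition~\ref{prop:special}.)

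The step I expect to demand the most care is the first one: checking that the monomial map above is genuinely an isomorphism onto $W\times(\CC^*)^{m-|S|}$---this is exactly where pairwise distinctness of the new exponents is used---together with the attendant genericity bookkeeping, namely that whether a generic $d$ lifts to $W^{\cB}$ is controlled only by the $S$-coordinates of $d$, which is what makes $\cR(\cB)$ a cylinder. A purely combinatorial route is also available through Theorem~\ref{thm:codimension}: the new coordinates let each configuration with $S_i\subsetneq\{1,\dots,m_i\}$ contribute an independent edge direction for free, so $\codim\cT\cR(\cB)$ equals $\codim\cT\cR(\cA_T)$ for the subtuple $\cA_T$ of fully specialized configurations, after which one checks directly that the specialized resultant is nonempty exactly when this subsystem is generically solvable; this route still needs the same non-obstruction input and does not seem shorter.
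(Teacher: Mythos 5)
Your proof is correct, but it takes a genuinely different route from the paper's. The paper's argument is purely polyhedral and very short: by Lemma~\ref{lem:computingstableintersections}, $\cT\cR_S(\cA)\neq\emptyset$ iff some cone $C=\RR_{\geq 0}\{e_{ij}:a_{ij}\notin E_i\}+\row(\Cay(\cA))$ from Theorem~\ref{thm:orthants} satisfies $\dim(C+U_S)=m$, and since $\row(\Cay(\cB))=\row(\Cay(\cA))+U_S$ (this is exactly where the distinctness of the standard vectors is used), applying Theorem~\ref{thm:orthants} to $\cB$ identifies these sums with the cones of $\cT\cR(\cB)$, so everything reduces to a single row-space identity. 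You handle the left-hand side by the same Lemma~\ref{lem:computingstableintersections} reduction, but for the right-hand side you instead establish the stronger structural fact that $\cR(\cB)$ is the cylinder $\mathrm{pr}_S^{-1}\big(\overline{\mathrm{pr}_S(\cR(\cA))}\big)$, via the monomial torus isomorphism $c_{ij}=d_{ij}z_{ij}$ on the incidence varieties (again the place where pairwise distinct standard vectors are needed), and then tropicalize using the projection formula of Sturmfels--Tevelev, compatibility of tropicalization with products, Bieri--Groves, and irreducibility of $\cR(\cA)$, arriving at $\cT\cR(\cB)=\mathrm{pr}_S(\cT\cR(\cA))\times\RR^{m-|S|}$. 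Both chains are valid; your intermediate claims (the cylinder identity, the product and projection compatibilities, and the passage from full-dimensionality to equality with $\RR^{S}$ via irreducibility) all check out. The paper's route buys brevity and stays entirely inside the combinatorial description already proved; your route buys an explicit geometric explanation of why $\cB$ encodes the specialization --- $\cT\cR(\cB)$ is literally a cylinder over the projection of $\cT\cR(\cA)$ --- and it makes explicit the step, left implicit in the paper, that full-dimensionality of a tropicalized irreducible variety forces it to be all of the ambient space.
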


\begin{proof}
According to Lemma~\ref{lem:computingstableintersections}, $\cT\cR_S(A) \neq \emptyset$ if and only if there exists a cone $C\subseteq \cT\cR(A)$ such that $U_S+C=\RR^{m}$ where $U_S$ is as in Definition~\ref{def:specialized}.  According to the simple description of tropical resultants in Theorem~\ref{thm:orthants} we may assume that $C$ has the form $\RR_{\geq 0}\{e_{ij} : a_{ij} \notin E_i\} + \row(\Cay(\cA))$. Equivalently, the stable intersection is nonempty if and only if there exists a choice $E$ such that $\RR_{\geq 0}\{e_{ij} : a_{ij} \notin E_i\} + \row(\Cay(\cA))+U_S$ has dimension $m$. Applying Theorem~\ref{thm:orthants} to $\cB$, this is equivalent to $\cT\cR(\cB)$ being full dimensional, since $\row(\Cay(\cA))+U_S=\row(\Cay(\cB))$.
\end{proof}

Combining Lemma~\ref{lem:nonempty} and the results from Section~\ref{sec:codimension} about computation of codimension, we get a polynomial time algorithm for deciding if a specialized result is nonempty. Another consequence of the lemma is the following algorithm for checking membership of a point in a specialized tropical resultant.

\begin{alg}(SpecializedResultantContains($\cA,S,\omega$))\\
{\bf Input:} A tuple $\cA$ of point configurations and a choice $S$ of specialized coefficients. A vector $\omega\in\RR^{m}$.\\
{\bf Output:} ``True'' if $\omega\in\cT\cR_S(\cA)$, ``False'' otherwise.
\begin{itemize}
\item Compute the mixed subdivision of $A$ induced by $\omega$ by computing the regular subdivision of $\Cay(\cA)$ induced by $\omega$. (See Section~\ref{sec:algorithms}).
\item For each fully mixed cell:
\begin{itemize}
\item construct a subconfiguration $\cA'$ of points involved in the cell.
\item Return ``True'' if the specialized resultant of $\cA'$ is nonempty.
\end{itemize}
\item Return ``False''.
\end{itemize}
\end{alg}
\begin{proof}
By Lemma~\ref{lem:stableAssociative}(2), Proposition~\ref{prop:linkdecomposition} and Corollary~\ref{cor:linkst}, we have that the support of $\link_\omega(\cT\cR_S(\cA))$ is the union of supports of  $\cT\cR_S(\cA')$, under the appropriate identification of $\cT\cR_S(\cA')$ as a subset of $\RR^m$, where $\cA'$ runs over all fully mixed cells of the mixed subdivision of $\cA$ induced by $\omega$.  Hence $\omega \in \cT\cR_S(\cA)$ if and only if one of  $\cT\cR_S(\cA')$ is nonempty.
\end{proof}

%\begin{ex}
%Specialized resultant of three standard triangles
%\end{ex}

\begin{alg}(NonTrivialVectorInSpecializedResultant($\cA,S$))\label{alg:vectorinstable}\\
%Finding a vector in the specialized tropical resultant outside its lineality space\\
{\bf Input:} A tuple $\cA$ of configurations, a choice $S$ of specialized coefficients such that $U_S\cap\row(\Cay(\cA))\subsetneq\cT\cR_S(\cA)$.\\
{\bf Output:} A vector $\omega \in \cT\cR_S(\cA)\setminus\row(\Cay(\cA))$
\begin{itemize}
\item For each $E = (E_1, E_2, \dots, E_k)$ : $E_i$ is a two-element subset of $A_i$,
  \begin{itemize}
  \item Let $C = \RR_{\geq 0}\{e_{i,j} : i \notin E_j\} + \row(\Cay(\cA))$.
  \item If $\codim(C + U_S) = 0$ and $U_S\cap C\not=U_S\cap\row(\Cay(\cA))$ then
    \begin{itemize}
      \item Find among the generators of $U_S\cap C$ a vector $v$ outside the subspace $U_S\cap\row(\Cay(\cA))$.
    \item Return $v$.
  \end{itemize}
  \end{itemize}
\end{itemize}
\end{alg}

The following recursive algorithm finds a perturbed point in a starting cone for the specialized tropical resultant $\cT\cR_S(\cA)$.

\begin{alg}
\label{alg:startingpoint}
(StartingPoint($\cA,S$))\\
{\bf Input:} A tuple $\cA$ of configurations, a choice $S$ of specialized coefficients such that $\cT\cR_S(\cA)\not=\emptyset$.\\
{\bf Output:} A vector $\omega_\varepsilon\in\QQ(\varepsilon)^{m}$ such that for every fan structure of $\cT\cR_S(\cA)$ defined over $\QQ$ it holds that for $\varepsilon>0$ sufficiently small, $\omega_\varepsilon$ is in a maximal cone of $\cT\cR_S(\cA)$.
\begin{itemize}
\item If $\dim(\cT\cR_{S}(\cA)) = \dim(U_S\cap\row(\Cay(\cA)))$, then return $b_1+\varepsilon b_2+\cdots+\varepsilon^{t-1}b_t$ where $b_1,b_2,\dots,b_t$ is some basis of $U_S\cap\row(\Cay(\cA))$.
\item Compute an $\omega\in\cT\cR_S(\cA)\setminus\row(\Cay(\cA))$ using Algorithm~\ref{alg:vectorinstable}.
\item Compute the subdivision $\Delta_\omega$ of $\Cay(\cA)$ induced by $\omega$.
\item For every fully mixed cell in $\Delta_\omega$.
\begin{itemize}
\item Let $\cA'$ be the subconfiguration of the involved points.
\item Let $S'$ be the restriction of $S$ to $\cA'$.
\item If $\textup{codimension}(\cT\cR_{S'}(\cA'))=\textup{codimension}(\cT\cR_S(\cA))$ then
\begin{itemize}
\item Return $\omega+\varepsilon\cdot\textup{StartingPoint}(\cA',S')$.
\end{itemize}
\end{itemize}
\end{itemize}
\end{alg}
\begin{proof}
The correctness of the algorithm follows from the facts that the link at $\omega$ of the tropical resultant is the union of tropical resultants corresponding to the fully mixed cells in $\Delta_\omega$  (Proposition~\ref{prop:linkdecomposition}), that taking links commutes with taking stable intersections (Corollary~\ref{cor:linkst}), and that the returned value from the recursive call (after expansion with zeros) is a generic vector in the link of $\cT\cR_S(\cA)$ at $\omega$ and in particular lies outside the secondary cone of $\omega$.
\end{proof}

% {\bf Do we need to consider only one mixed cell in the algorithm?}{I think we need to consider all mixed cells because some of them may not contribute to the stable intersection.}

We now turn to the problem of enumerating all maximal cones in $\cT\cR_S(\cA)$ considered as a subfan of the restriction of the secondary fan of $\Cay(\cA)$ to the subspace $U_S$.  While connectedness in codimension~$1$ is not preserved under stable intersections in general, a specialized tropical resultant $\cT\cR_S(\cA)$ is connected in codimension~$1$ because it coincides with the tropical variety of a prime ideal, as shown in the paragraph above Proposition~\ref{prop:special}.
%(Lemma~\ref{lem:slicingbinomial}).  
The proof in \cite{BJSST} that the tropical varieties of prime ideals are connected in codimension~$1$ contained some mistakes, which were later corrected in~\cite{CartwrightPayne}.

The output of Algorithm~\ref{alg:startingpoint} can be converted into a secondary cone in $\cT\cR(\cA)$ containing $\omega_\varepsilon$ in its relative interior, for example by computing a maximal secondary cone containing $\omega_\varepsilon$ and taking the face containing $\omega_\varepsilon$ in its relative interior.  For sufficiently small $\varepsilon > 0$, this secondary cone would not change with $\varepsilon$.

Following the approach of \cite{traversingsymmetricfans} discussed in Section~\ref{sec:algorithms}, we are left with the problem of computing the link at a ridge in $\cT\cR_S(\cA)$. If the subspace $U_S$ is generic enough such that $$\codim(U_S \cap \row(\Cay(\cA)) = \codim(U_S) + \codim(\row(\Cay(\cA))),$$ then the link of $\cT\cR_S(\cA)$ is combinatorially equivalent to the link of $\cT\cR(\cA)$ and the support of the link is a union of resultant fans of subconfigurations (Proposition~\ref{prop:linkdecomposition}) where each fan can be found using Theorem~\ref{thm:orthants}. If $U_S$ is not generic, then computing a stable intersection with $U_S$ is required for finding the link in $\cT\cR_S(\cA)$ (Corollary~\ref{cor:linkst}). This is Algorithm~\ref{alg:stablelinkprojections} below.  Recall that the dimension of $\cT\cR_S(\cA)=\cT\cR(\cA)\cap_{st}\{U_S\}$ can be computed using Proposition~\ref{prop:stabledimension} and the codimension formulas from  Section~\ref{sec:codimension}.

\begin{alg} \label{alg:stablelinkprojections}
StableLink($\cA,S,\omega$)\\
{\bf Input:} A tuple $\cA$ of configurations, a choice $S$ of specialized coefficients, a vector $\omega\in\RR^n$ in the relative interior of a ridge $R$ of $\cT\cR_S(\cA)$.\\
{\bf Output:} A vector in each facet of $\link_\omega(\cT\cR_S(\cA))$.
\begin{itemize}
\item Let $d$ be the dimension of $\cT\cR(\cA)\cap_{st}\{U_S\}$.
\item Compute the subdivision $\Delta_\omega$ of $\Cay(\cA)$ induced by $\omega$.
\item $l:=\emptyset$.
\item For every fully mixed cell in $\Delta_\omega$
\begin{itemize}
\item Let $\cA'$ be the subconfiguration of involved points in the cell.
\item For each $E=(E_1,E_2,\dots,E_k):E_i$ is a two-element subset of $\cA'_i,$
\begin{itemize}
\item Let $C=\RR_{\geq 0}\{e_{i,j} : i \notin E_j\} + \row(\Cay(\cA))$.
\item If $\dim(U_S+C)=m$ and $\dim(U_S\cap C)=d$ then
\begin{itemize}
%\item Compute (the projection) $C':=C+span(R)$ of $C$ along $span(R)$.
\item   Let $V$ be a set of one or two vectors in $U_S \cap C$ such that $(U_S \cap C)+\spann(R)$ is positively spanned by $V \cup \spann(R)$.  
%$0,1$ or $2$ vectors such that $C+\spann(V)=C+\spann(R)$ 
\item  $l:=l\cup V$
\end{itemize}
\end{itemize}
\end{itemize}
\item Return $l$.
\end{itemize}
\end{alg}

Another approach to computing a link at a point of the stable intersection is to compute the restriction of the secondary fan of each fully mixed subconfiguration to $U_S$. We then get the resultant fan as certain rays of the secondary fan. This is Algorithm~\ref{alg:stablelinkslicing}.

\begin{alg} \label{alg:stablelinkslicing}
StableLink($\cA,S,\omega$)\\
{\bf Input:} A tuple $\cA$ of configurations, a choice $S$ of specialized coefficients, a vector $\omega\in\RR^n$ in the relative interior of a ridge $R$ of $\cT\cR_S(\cA)$.\\
{\bf Output:} A vector in each facet of $\link_\omega(\cT\cR_S(\cA))$.
\begin{itemize}
\item Let $d$ be the dimension of $\cT\cR(\cA)\cap_{st}\{U_S\}$.
\item Compute the subdivision $\Delta_\omega$ of $\Cay(\cA)$ induced by $\omega$.
\item $l:=\emptyset$.
\item For every fully mixed cell in $\Delta_\omega$
\begin{itemize}
\item Let $\cA'$ be the subconfiguration of the involved points of the cell.
\item If the codimension of the lineality space of the restriction $\cF$ of the secondary fan of $\Cay(\cA')$ to $U_S$ is $m-d$, then
\begin{itemize}
\item Choose $v$ such that $v$ extends $\spann(R)\cap U_S$ to a generating set of the lineality space of $\cF$.
\item If SpecializedResultantContains($\cA',S,v$) then $l:=l\cup\{v,-v\}$.
\end{itemize}
\item else
\begin{itemize} 
\item Compute all maximal cones in $\cF$ (by traversal).
\item For each ray $v$ in $\cF$, if SpecializedResultantContains($\cA',S,v$) then $l:=l\cup\{v\}$.
\end{itemize}
\end{itemize}
\item Return $l$.
\end{itemize}
\end{alg}
The above algorithm is to be read with proper identifications.  Namely, when restricting to $\cA'$ the vectors in $\RR^m$ need to be truncated accordingly, and so does the set $S$, and $v$ needs to be expanded when adding it to $l$.
When adding vectors to $l$, it is advantageous to choose the vectors as primitive vectors orthogonal to the span of the ridge so that duplicates can be removed easily.

If $U_S$ is of high dimension, a typical situation is that each subconfiguration is a number of edges and a triangle. In this case there are only few choices $E$ to run through in Algorithm~\ref{alg:stablelinkprojections}. For lower dimensional $U_S$ there can be many choices of $E$ but with many of the contributions to the stable intersection being the same. See Example~\ref{ex:badlink}. In such a case Algorithm~\ref{alg:stablelinkslicing} performs better than Algorithm~\ref{alg:stablelinkprojections}. In general it is difficult to predict which algorithms is better. In our implementation we use mostly Algorithm~\ref{alg:stablelinkslicing}, and Algorithm~\ref{alg:stablelinkprojections} only when there is no specialization.
\begin{ex}
\label{ex:badlink}
Let $\cA=(A_1,A_2,A_3)$ with
$$ A_1=\{(0,0),
(0,1),
(0,3),
(1,0),
(3,0)\}$$
$$A_2=\{
(0,0),
(0,1),
(0,3),
(1,0),
(3,0)\}$$
$$A_3=\{
(0,0),
(0,1),
(0,2),
(1,0),
(1,3),
(2,0),
(3,1),
(3,3)\}.$$
Choosing the specialization $S$ of every coefficient except the coefficient of the point $(0,0)$ in each configuration, we get that $\cT\cR_S(\cA)$ is a two-dimensional fan with f-vector $(1, 13, 17)$
living inside $\RR^3\subseteq\RR^{18}$. The link at $e_{11}\in\RR^{18}$
%$\omega=(0,0,0,0,0,0,0,0,0,0,1,0,0,0,0,0,0,0)$
consists of $4$ rays.
The traversal of $\cT\cR_S(\cA)$ takes 79 seconds if Algorithm~\ref{alg:stablelinkprojections} is used but only 5 seconds if Algorithm~\ref{alg:stablelinkslicing} is used for computing the links.  Algorithm~\ref{alg:stablelinkprojections} needs to iterate through 2100 vertex pair choices at $e_{11}$, but much fewer for many of the other links.
\end{ex}

%{\bf
% What this has to do with fiber polytopes.
%Relation to EKP paper for computing projections of secondary polytopes.???
%}

\subsection{Implicitization using specialized resultants}
\label{sec:imp}

In this section we will show that the tropicalization of a variety parameterized by polynomials with generic coefficients can be computed using specialized tropical resultants.  Let $f_1, f_2, \dots, f_k \in \CC[x_1^{\pm 1}, x_2^{\pm 1}, \dots, x_n^{\pm 1}]$ be polynomials parameterizing a variety $X$ in $\CC^k$.  Let $\Gamma$ be the {\em graph} of the parameterizing map, defined by $\langle y_1 - f_1, y_2 - f_2, \dots, y_k - f_k \rangle$ in $\CC[x_1^{\pm 1},x_2^{\pm 1},\dots,x_n^{\pm 1}, y_1, y_2, \dots, y_k]$.  When $f_1, f_2, \dots, f_k$ have generic coefficients, the tropical variety of $\Gamma$ is the stable intersection of the tropical hypersurfaces of the polynomials $ y_1 - f_1, y_2 - f_2, \dots, y_k - f_k$.  Since $X$ is the closure of the projection of $\Gamma \subset (\CC^*)^n \times \CC^k$ onto $\CC^k$, by tropical elimination theory, we can compute the tropical variety $\cT(X)$ as a projection of $\cT(\Gamma)$.  This approach was used in \cite{STY, SturmfelsYu08}.

Another way to compute $\cT(X)$ is by using specialized resultants.  Let $\cA = (A_1, A_2, \dots, A_k)$ where $A_i = \supp(f_i) \sqcup \{0\}$ for each $i = 1,2,\dots,k$.  Let $S = (\supp(f_1), \supp(f_2), \dots, \supp(f_k))$ be the sets of points to specialize, and let $V_S$ be the subspace of $\cprod_{i=1}^k \RR^{A_i} \times \RR^n$ defined by setting the coordinates in $S$ to $0$.

\begin{pro}
With the notation above, $\cT(X) = \cT\cR_S(\cA)$, i.e.\ the tropicalization of a variety parameterized by polynomials with generic coefficients coincides with a specialized resultant.
\end{pro}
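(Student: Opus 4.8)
The plan is to identify $\cR_S(\cA)$, viewed as a subvariety of a torus, with the parameterized variety $X$ up to a harmless coordinate automorphism, and then to apply Proposition~\ref{prop:special}. As a first step I would make the specialized resultant concrete: by the discussion following Proposition~\ref{prop:special}, $\cT\cR_S(\cA)$ is the tropicalization of the specialized resultant variety obtained by replacing the parameters $\gamma_j$ with generic complex numbers, and since $f_1,\dots,f_k$ have generic coefficients by hypothesis, I would take those numbers to be precisely the coefficients of the $f_i$. With $A_i=\supp(f_i)\sqcup\{0\}$ and $S_i=\supp(f_i)$, the support-$A_i$ polynomial whose $S_i$-coefficients are specialized in this way is $g_i=c_{i,0}+f_i(x)$, where $c_{i,0}$ is the single remaining free coefficient, namely that of the adjoined point $0$.

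Next I would trace through the incidence variety. The specialized incidence variety $W_S$ is $\{(c_{1,0},\dots,c_{k,0},x):c_{i,0}+f_i(x)=0\text{ for all }i\}$, which over the open locus of $(\CC^*)^n$ where no $f_i$ vanishes is the graph of $x\mapsto -f(x):=(-f_1(x),\dots,-f_k(x))$, hence irreducible of dimension $n$. Since $\cR_S(\cA)$ is the closure of the projection of $W_S$ onto the coefficient coordinates, $\cR_S(\cA)$ equals the closure in $(\CC^*)^k$ of $\{-f(x):x\in(\CC^*)^n\}$. Let $\nu:(\CC^*)^k\to(\CC^*)^k$ be coordinate-wise negation, i.e.\ multiplication by the torus point $(-1,\dots,-1)$; then $\nu(\cR_S(\cA))$ is the closure in $(\CC^*)^k$ of the image of the parameterization, that is, $X\cap(\CC^*)^k$. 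Because $\nu$ is an automorphism of the torus given by multiplication by a point whose coordinates all have valuation $0$, it translates tropicalizations by $0$ and so acts as the identity on them; hence $\cT(\cR_S(\cA))=\cT(X)$. Combining this with $\cT\cR_S(\cA)=\cT(\cR_S(\cA))$ from Proposition~\ref{prop:special}, and using the identification of $U_S$ with $\RR^k$ via the $k$ free constant-term coordinates, gives $\cT(X)=\cT\cR_S(\cA)$.

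The step I expect to be the main obstacle is the genericity bookkeeping at the start: one must make sure that replacing the parameters $\gamma_j$ by the specific generic coefficients of the $f_i$ does not change the tropical variety. This is exactly where the hypothesis of generic coefficients is used, and it is supplied by the remark after Proposition~\ref{prop:special} that specializing the $\gamma_j$ to values outside a certain hypersurface produces the specialized tropical resultant. The remaining points — identifying the projection of $W_S$ with the parameterized image, the harmless sign change $\nu$, and reconciling the ``in $\CC^k$'' and ``in $(\CC^*)^k$'' descriptions of $X$ — are routine.
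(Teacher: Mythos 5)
Your proposal is correct, but it takes a genuinely different route from the paper's proof. The paper stays on the incidence/graph side: it realizes the graph $\Gamma$ of the parameterization as a specialization of the incidence variety $W$, uses genericity of the coefficients to get $\cT(\Gamma)=\cT(W)\stint V_S$, invokes tropical elimination to compare $\cT(X)$ and $\cT(\Gamma)$ modulo the lineality $\{0\}\times\RR^n$, and then applies Lemma~\ref{lem:stableCommutes} to commute the Minkowski sum with $\RR^n$ past the stable intersection before quotienting. You instead work directly on the resultant side: specializing the parameters $\gamma_j$ to the (generic) coefficients of the $f_i$, you identify the specialized resultant variety with the closure of the image of $x\mapsto -f(x)$, i.e.\ with the torus translate $\nu^{-1}(X\cap(\CC^*)^k)$, observe that translation by a unit of valuation zero does not move tropicalizations, and conclude from Proposition~\ref{prop:special} together with the hypersurface-avoidance remark following it. What the paper's route buys is that it never needs to identify $\cR_S(\cA)$ with a projection closure --- everything is handled tropically by elimination and the stable-intersection lemmas; what your route buys is a more classical and arguably more transparent statement, namely that the specialized resultant variety \emph{is} the parameterized variety up to a coordinatewise sign change, so the tropical claim is an immediate corollary of Proposition~\ref{prop:special}. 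The one step you lean on that the paper only asserts implicitly (in the irreducibility paragraph before Proposition~\ref{prop:special}) is that $\cR_S(\cA)=V(J)$ coincides with the closure of the projection of $W_S$, and that this persists when the $\gamma_j$ are replaced by generic constants; for full rigor one should note that $\cR(\cA)$ is irreducible and the projection to the specialized coordinates is dominant, so the generic fiber of the closure is the closure of the generic fiber, and then spread out to constants avoiding a hypersurface --- the same order of genericity bookkeeping the paper itself performs, so this is a point to acknowledge rather than a gap that breaks the argument.
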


\begin{proof}
Let $W$ be the incidence variety in $\cprod_{i=1}^k (\CC^*)^{A_i} \times (\CC^*)^n$ as in (\ref{eqn:incidenceVariety}), defined by equations of the form $y_i - g_i$ where $g_i$ is a polynomial with the same support as $f_i$ but with indeterminate coefficients.  Then the graph $\Gamma$ is obtained by specializing the coefficients of $g_i$ to those of $f_i$.  Since the coefficients of $f_i$ were assumed to be generic, we get $\cT(\Gamma) = \cT(W) \stint V_S$. 
By tropical elimination, $\cT(X) + (\{0\} \times \RR^n) = \cT(\Gamma) + ( \{0\} \times \RR^{n})$ in $\RR^k \times \RR^n$, which is in turn embedded in $\cprod_{i=1}^k \RR^{A_i} \times \RR^n$.  By the following lemma, $\cT(\Gamma) + ( \{0\} \times \RR^{n}) =  (\cT(W) + (\{0\} \times \RR^{n})) \stint V_S$.  After quotienting out both sides by $\{0\} \times \RR^n$, which is in the lineality space, we obtain $\cT(X) = \cT\cR_S(\cA)$.
\end{proof}

\begin{lem}
\label{lem:stableCommutes}
Let $\cF$ be a locally balanced fan in $\RR^N$.  Let $L$ and $L'$ be linear subspaces of $\RR^N$ such that $L' \subset L$.  Then
$$
(\cF \stint L) + L' = (\cF + L') \stint L
$$
In other words, stable intersection with a linear space commutes with Minkowski sum with a smaller linear space.
\end{lem}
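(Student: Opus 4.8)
The plan is to prove the identity at the level of supports, which is how $\stint$ was extended in the text and is the form in which it is used in the preceding proposition; the two sides then automatically carry the same common-refinement fan structure. Observe first that $L$, regarded as the fan with the single maximal cone $L$, is locally balanced, and that $\cF+L'$ is again a locally balanced fan because $L'$ is a linear subspace; hence both stable intersections appearing in the statement are defined and the cellular description of Lemma~\ref{lem:computingstableintersections} applies to each.

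First I would compute $\supp\big((\cF\stint L)+L'\big)$. By Lemma~\ref{lem:computingstableintersections} applied to $\cF$ and $\{L\}$, and since $\codim(C+L)=0$ is equivalent to $\spann(C)+L=\RR^N$,
$$\supp(\cF\stint L)=\bigcup_{C}(C\cap L),\qquad \supp\big((\cF\stint L)+L'\big)=\bigcup_{C}\big((C\cap L)+L'\big),$$
where $C$ ranges over the cones of $\cF$ with $\spann(C)+L=\RR^N$ and we have used that Minkowski sum with $L'$ distributes over unions. Next I would compute $\supp\big((\cF+L')\stint L\big)$ the same way, using a locally balanced fan refining $\supp(\cF)+L'$: since $L'\subseteq L$ one has $\spann(C+L')+L=\spann(C)+L$, so the relevant cones are again indexed by exactly the cones $C$ of $\cF$ with $\spann(C)+L=\RR^N$, and
$$\supp\big((\cF+L')\stint L\big)=\bigcup_{C}\big((C+L')\cap L\big).$$

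Comparing the two displays, the lemma is reduced to the cone-wise identity
$$(C+L')\cap L=(C\cap L)+L'\qquad\text{for every cone }C\text{ and all subspaces }L'\subseteq L,$$
which is the convex analogue of the modular law. The inclusion $\supseteq$ is immediate: $C\cap L\subseteq C\subseteq C+L'$ and $C\cap L\subseteq L$, while $L'\subseteq C+L'$ because $0\in C$, and $L'\subseteq L$. For $\subseteq$, if $z=c+l'$ with $c\in C$, $l'\in L'$ and $z\in L$, then $c=z-l'\in L$ since $L'\subseteq L$, so $c\in C\cap L$ and $z\in(C\cap L)+L'$. Substituting this identity termwise into the two unions completes the proof.

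The step I expect to require the most care is the passage from the definition of $(\cF+L')\stint L$ to the displayed union over the cones $C$ of $\cF$: one must check that refining $\supp(\cF)+L'$ to an honest locally balanced fan neither creates nor destroys relevant cells, which uses purity of $\supp(\cF)+L'$ (guaranteed by local balancedness) together with the span identity $\spann(C+L')+L=\spann(C)+L$. Once this bookkeeping is in place, the remaining ingredients --- distributivity of $+L'$ over unions and the modular identity --- are entirely elementary.
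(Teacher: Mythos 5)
Your reduction to the cone-wise modular law $(C+L')\cap L=(C\cap L)+L'$ is correct, and so is the computation of $\supp(\cF\stint L)+L'$ via Lemma~\ref{lem:computingstableintersections}; the gap is in your treatment of the right-hand side. Lemma~\ref{lem:computingstableintersections} is a statement about locally balanced \emph{fans}, while the collection $\{C+L' : C\in\cF\}$ is in general not a fan at all (the cones $C+L'$ overlap non-facially and may have different dimensions), so to invoke the lemma you must (i) produce a locally balanced fan $\mathcal{G}$ with $\supp(\mathcal{G})=\supp(\cF)+L'$, and (ii) show that the union $\bigcup_{D\in\mathcal{G},\,\codim(D+L)=0} D\cap L$ it furnishes equals $\bigcup_{C\in\cF,\,\codim(C+L)=0}(C+L')\cap L$. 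Neither point is established in your argument: the purity of $\supp(\cF)+L'$ that you invoke (``guaranteed by local balancedness'') is precisely the nontrivial assertion to be proved --- it does not follow formally from purity of $\cF$, but needs the codimension-one link condition, i.e.\ the kind of fact delegated to \cite{stableIntersection} --- and the span identity $\spann(C+L')+L=\spann(C)+L$ only yields the easy inclusion of the cells $D$ (each contained in some $C+L'$) into the right-hand union; the reverse inclusion requires an additional local argument, e.g.\ that among the finitely many cells of $\mathcal{G}$ through a point $z\in(C+L')\cap L$ at least one contains a relatively open piece of $C+L'$ near $z$ and hence has full-dimensional sum with $L$. As written, the identity you need for $\supp((\cF+L')\stint L)$ is essentially equivalent to the lemma itself, so the proof is circular at that step, although the strategy can be completed with the indicated extra work.

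The paper's proof avoids this issue entirely by never putting a fan structure on $\cF+L'$: since the support of a stable intersection is defined through links of the underlying \emph{sets}, both sides are identified directly with $\{u+v : u\in L',\ v\in\supp(\cF)\cap L,\ \dim(\link_v(\cF)+L)=N\}$, using only that $L'\subseteq L$ makes both sides $L'$-translation invariant and that the link of $\supp(\cF)+L'$ at a point is governed by the links of $\cF$ along the corresponding $L'$-translate. If you wish to keep your cellular approach, the cleanest repair is to prove the inclusion of each $(C+L')\cap L$ (for $\codim(C+L)=0$) into $\supp((\cF+L')\stint L)$ directly from the link-based definition, using local balancedness of $\cF$ itself rather than of an unverified fan structure on $\cF+L'$, and to keep your $\subseteq$ direction, which only uses that every point of $\supp(\cF)+L'$ lies in some $C+L'$.
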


\begin{proof}
Both $(\cF \stint L) + L'$ and $ (\cF + L') \stint L$ are empty if $\cF + L$ has dimension less than $N$.  Suppose this is not the case.  Then both sets contain $L'$ in their lineality space and consist of points of the form $u+v \in \RR^N$ where $u \in L'$ and $v \in \cF \cap L$ are such that $\dim(\link_v(\cF) + L) = N$.
\end{proof}

Since the tropical variety of the graph $\Gamma$ only depends on the extreme monomials of the parameterizing polynomials, the next result follows immediately.

\begin{cor}
\label{cor:impExtreme}
When using specialized resultants for implicitization, the extreme monomials of the input polynomials determine the tropical variety of the parameterized variety, so we can safely disregard the non-extreme terms.
\end{cor}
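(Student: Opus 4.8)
The plan is to deduce the corollary from the preceding Proposition, which identifies $\cT(X)$ with $\cT\cR_S(\cA)$, by showing that the tropical variety of the graph $\Gamma$, and hence of $X$, depends on each $f_i$ only through the vertices of its Newton polytope $\New(f_i)$. Once this is established, deleting from each $f_i$ the monomials $x^a$ with $a$ not a vertex of $\New(f_i)$ changes neither $\cT(X)$ nor, by the Proposition applied to the smaller data, the specialized resultant $\cT\cR_S(\cA)$, which is exactly the assertion.

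So all the work is in showing that $\cT(\Gamma)$ depends only on the extreme monomials. Since the $f_i$ have generic coefficients, we may use the description noted above, $\cT(\Gamma) = \cT(V(y_1 - f_1)) \stint \cdots \stint \cT(V(y_k - f_k))$, so it suffices to treat one factor. The polynomial $h_i := y_i - f_i$ has Newton polytope in $\RR^n \times \RR^k$ equal to the pyramid $P_i := \conv\bigl( (\New(f_i) \times \{0\}) \cup \{(0, e_i)\} \bigr)$ over $\New(f_i) \times \{0\}$ with apex the $i$-th coordinate point $(0, e_i)$ of the $y$-factor. Because $(0, e_i)$ is the unique exponent of $h_i$ with nonzero $y$-part, the face of $P_i$ in $\{y = 0\}$ is precisely $\New(f_i) \times \{0\}$, so the vertices of $P_i$ are the apex together with the vertices of $\New(f_i)$; in particular $P_i$ is determined by the extreme monomials of $f_i$ alone. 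Now $h_i$ has pairwise distinct exponents, so for every weight $\omega$ the initial form $\init_\omega(h_i)$ is either a single monomial (when $\omega$ lies in the interior of a maximal cone of the normal fan of $P_i$) or has at least two terms with distinct monomials, hence is not a monomial. Therefore $\cT(V(h_i))$ equals the codimension-one skeleton of the normal fan of $P_i$, a set determined by $P_i$ and thus by the extreme monomials of $f_i$. Feeding this into the stable intersection shows that $\cT(\Gamma)$ depends only on the extreme monomials, and projecting via tropical elimination, $\cT(X) = \pi(\cT(\Gamma))$, transfers the conclusion to $X$; the Proposition then gives it for $\cT\cR_S(\cA)$.

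The one step that deserves attention is the identification of the tropical hypersurface of the single polynomial $h_i$ with the codimension-one skeleton of the normal fan of $\New(h_i)$: in general the support of a tropical hypersurface can be a proper subset of that skeleton because of accidental cancellations in initial forms. Here, however, there is nothing to check beyond noting that $h_i$ has no repeated exponents, so a sum of two or more of its distinct monomials can never collapse to a monomial; the genericity hypothesis on the coefficients of $f_i$ is used only to license the stable-intersection description of $\cT(\Gamma)$, not this step.
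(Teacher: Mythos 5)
Your proposal is correct and follows essentially the same route as the paper: the paper deduces the corollary immediately from the observation that $\cT(\Gamma)$ depends only on the extreme monomials, since with generic coefficients $\cT(\Gamma)$ is the stable intersection of the tropical hypersurfaces of the $y_i - f_i$, each of which is determined by its Newton polytope, and then $\cT(X)=\cT\cR_S(\cA)$ by the preceding proposition. You merely spell out the details (the pyramid structure of $\New(y_i-f_i)$ and the no-cancellation argument) that the paper leaves implicit.
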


Using specialized resultants for implicitization instead of the approach in \cite{STY, SturmfelsYu08}  has the advantage that the computation of $\cT(\Gamma)$ as a stable intersection can be avoided.  Experiments show that the resultant description may speed up the reconstruction of the Newton polytope in some cases.  See Section~\ref{sec:comparison} for examples.

Moreover, when the variety $X$ is not a hypersurface, our resultant description gives a fan structure of $\cT(X)$ derived from the restriction of a secondary fan to a linear subspace, which is the normal fan of a fiber polytope.  Tropical elimination does {\em not} give a fan structure for varieties of codimension more than one.

\subsection{Tropical elimination for specialized tropical resultants}
\label{sec:elim}

As before, let $\cA$ be a tuple of point configurations in $\ZZ^n$ and $S$ be the tuple of subsets to be specialized.  
Let $W$ be the incidence variety and  $\cT W$ be is tropicalization as in Section~\ref{sec:simple}.  Let $W_S$ be a variety cut out by polynomials $f_i$ where the coefficients of monomials in $S$ have been specialized.  Then  $f_1, f_2, \dots, f_k$ may no longer form a tropical basis, but the tropicalization of $W_S$ can be computed as the stable intersection of tropical hypersurfaces of $f_1, f_2, \dots, f_k$ because the coefficients are assumed to be generic (or indeterminates).  The incidence variety $W$ is irreducible because it is a vector bundle over $(\CC^*)^n$, and although specializing coefficients may make $W_S$ reducible, all the irreducible components have the same tropical variety as seen in the paragraph above Proposition~\ref{prop:special}.  Hence any stable intersection of tropical hypersurfaces is connected in codimension~$1$, and we can use fan traversal to compute the stable intersection of hypersurfaces.

The specialized resultant is the projection of $W_S$ onto the non-specialized coefficient variables, and we can compute this using tropical elimination theory, which gives the tropical variety as a union of cones.  When the specialized tropical resultant is a tropical hypersurface, then we can reconstruct the normal fan of the dual Newton polytope using the methods in the next section.

The tropical hypersurface of $f_i$ only depends on the Newton polytope $P_i$ of $f_i$.  The non-specialized points in $A_i$ always contribute as vertices of $P_i$, but some specialized points of $A_i$ may not.  From this observation, we obtain the following result, which is not obvious from the resultant point of view.
\begin{lem}
\label{lem:interiorPoints}
If $a_{ij} \in A_i$ is a specialized point lying in the convex hull of other specialized points in $A_i$, then removing $a_{ij}$ from $A_i$ does not change the specialized tropical resultant.
\end{lem}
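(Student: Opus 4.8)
The plan is to reduce the statement to a fact about tropical hypersurfaces and then invoke the specialized-resultant machinery via Proposition~\ref{prop:special} (or the elimination-theoretic description of $\cT\cR_S(\cA)$ from this section). First I would observe the key input from tropical geometry: the tropical hypersurface $\cT(f_i)$, and more precisely the stable intersection of tropical hypersurfaces used to compute $\cT(W_S)$ when the coefficients of the monomials in $S$ are indeterminate, depends on $f_i$ only through its Newton polytope $P_i = \conv(A_i)$. This is because the non-specialized coefficients are generic, so $\cT(f_i)$ is the codimension-one skeleton of the inner normal fan of $P_i$ (weighted by lattice lengths of edges), exactly as in the discussion of tropical hypersurfaces in Section~\ref{sec:simple}. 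Hence $\cT\cR_S(\cA)$, being obtained from these hypersurfaces by stable intersection with the incidence relations and then projection (tropical elimination), is a function of the polytopes $P_1,\dots,P_k$ together with the data of which vertices carry specialized versus non-specialized coefficients.

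Next I would make precise what ``removing $a_{ij}$'' does. Suppose $a_{ij}$ is specialized and lies in $\conv(\{a_{il} : l \in S_i, l\neq j\})$. Let $A_i' = A_i \setminus \{a_{ij}\}$, let $\cA'$ be the resulting tuple, and let $S'$ be $S$ with the index $j$ deleted from $S_i$. Since all the points in $S_i$ (other than $j$) remain, and $a_{ij}$ is in their convex hull, we have $\conv(A_i') = \conv(A_i) = P_i$; moreover $a_{ij}$ was not a vertex of $P_i$, so it never appears as a vertex of the Newton polytope no matter what the specialized coefficient value is (for generic specialization, in fact it lies strictly above the lower hull, so it is not even a marked point of the induced subdivision). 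Therefore $\cT(f_i)$ is literally unchanged, and with it the stable intersection computing $\cT(W_S)$ and its projection. In the language of Definition~\ref{def:specialized}, $U_{S}$ and $U_{S'}$ differ only by the coordinate direction $e_{ij}$, which lies in $U_S$ and also in $\row(\Cay(\cA))$ up to the lineality conventions; the point is that the specialized coordinate $c_{ij}$ is forced to a generic constant and contributes nothing to the fan $\cT\cR_S(\cA)$ beyond a translation that is absorbed into the lineality space. So $\cT\cR_S(\cA) = \cT\cR_{S'}(\cA')$ under the obvious identification of ambient spaces.

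The cleanest route to formalize the last sentence is: by Proposition~\ref{prop:special}, $\cT\cR_S(\cA) = \cT(\cR_S(\cA))$ and $\cT\cR_{S'}(\cA') = \cT(\cR_{S'}(\cA'))$, where these resultant varieties are defined over the field $K$ of rational functions in the specialization parameters. The equation $f_i(x) = 0$ with $c_{ij}$ a generic parameter and $a_{ij}$ interior to the specialized support can be used, for generic parameter values, to eliminate the variable $c_{ij}$: the term $c_{ij}x^{a_{ij}}$ is a $K$-linear combination of the other specialized terms plus a correction, so the system defining $W_S$ is equivalent to the system defining $W_{S'}$ after a $K$-linear invertible change of coordinates on the $c_{ij}$-coordinate (the other coordinates untouched). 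An invertible monomial-free affine change of coordinates does not change the tropicalization of the projection onto the remaining coordinates. Hence $\cR_S(\cA)$ and $\cR_{S'}(\cA')$ have the same tropicalization, proving the lemma.

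The main obstacle I anticipate is bookkeeping the ambient-space identification cleanly: $\cT\cR_S(\cA)$ lives in $\RR^m$ while $\cT\cR_{S'}(\cA')$ lives in $\RR^{m-1}$, and one must check that the ``extra'' coordinate $e_{ij}$ is genuinely in the lineality (so the fans are equal, not merely equal after projection). This is exactly the content of the remark after Definition~\ref{def:specialized} and the observation in Lemma~\ref{lem:nonempty} that $\row(\Cay(\cA)) + U_S = \row(\Cay(\cB))$; the specialized coordinate contributes only to the lineality direction, so the claimed identification is the honest one. Everything else is the standard fact that tropicalization of a projection is insensitive to invertible linear reparametrization of the eliminated variables, together with genericity of the specialization parameters guaranteeing the reparametrization is well-defined.
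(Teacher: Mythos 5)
Your overall route is the paper's: describe $\cT\cR_S(\cA)$ by tropical elimination as the projection of the stable intersection of the tropical hypersurfaces of the $f_i$ with the specialized coefficients generic, and argue that this input data does not change when $a_{ij}$ is removed. The gap is that your invariance claim is about the wrong polytope. The hypersurface relevant for computing $\cT(W_S)$ is that of $f_i$ viewed as a polynomial in the non-specialized coefficients \emph{and} $x$; its Newton polytope lives in $\RR^{m_i-|S_i|}\times\RR^n$ and is $\conv\bigl(\{(e_{ij},a_{ij}): j\notin S_i\}\cup\{(0,a_{il}): l\in S_i\}\bigr)$, not $\conv(A_i)$. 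There every non-specialized point is automatically a vertex (it has its own coordinate), and a specialized point fails to be a vertex exactly when it lies in the convex hull of the \emph{other specialized} points --- which is why the hypothesis of the lemma is phrased that way. Your reduction, that $\cT\cR_S(\cA)$ depends only on $P_i=\conv(A_i)$ plus which vertices are specialized, and that it suffices that $a_{ij}$ is not a vertex of $\conv(A_i)$, proves too much and is false: take $A_1=\{0,1\}$ with nothing specialized and $A_2=\{0,1,2\}$ with only the middle point specialized. The point $1$ is not a vertex of $\conv(A_2)$, yet the specialized resultant is cut out by $c_{21}c_{12}^2-\gamma\, c_{11}c_{12}+c_{22}c_{11}^2$ (Newton polytope a triangle) before removal and by $c_{21}c_{12}^2+c_{22}c_{11}^2$ (a segment) after, and the two tropicalizations differ, even though $\conv(A_2)$ and the specialization labels of its vertices are identical in both cases. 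The repair is exactly the paper's observation: under the stated hypothesis $(0,a_{ij})$ lies in the hull of the other $(0,a_{il})$, so each big Newton polytope, hence each tropical hypersurface, hence the stable intersection and its projection, are literally unchanged.

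Your proposed ``cleanest formalization'' also fails. The coefficient of the removed term is specialized, hence a parameter of the field $K$, not a coordinate one can eliminate; exponent vectors in convex position do not give any $K$-linear dependence among the monomials $x^{a_{il}}$; and in general $\cR_S(\cA)$ and $\cR_{S'}(\cA')$ are genuinely different subvarieties of the same torus (already in the example above the two resultant polynomials differ), so no change of coordinates fixing the non-specialized coordinates can identify them --- only their tropicalizations agree, which is precisely what must be proved by the combinatorial argument. Finally, in your bookkeeping paragraph, $e_{ij}$ lies neither in $U_S$ nor, in general, in $\row(\Cay(\cA))$; no lineality argument is needed there, since the identification of ambient spaces is just the coordinate projection $U_S\to U_{S'}$ forgetting a coordinate that vanishes identically on $U_S$.
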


In other words, we may disregard the non-vertices among the specialized points because the Newton polytope and the tropical hypersurface of $f_i$ remain the same.  Using this lemma, we may be able to reduce the amount of work for computing specialized tropical resultants or specialized resultant polytopes.

\section{Polytope reconstruction}
\label{sec:polytopeReconstruction}

In this section we describe an algorithm for finding a fan structure
on a tropical hypersurface $T\subseteq \RR^n$.  Recall that the tropical hypersurface of a polytope $P \subset \RR^n$ is the set of $\omega \in \RR^n$ for which there exist distinct $p,q \in P$ such that for any $r\in P$, $\omega \cdot p = \omega \cdot q \leq \omega \cdot r$.  In other words, the tropical hypersurface of a polytope is the union of the normal cones to the polytope at the edges.  The multiplicity of a point in the relative interior of such a normal cone is the (lattice) length of the edge.
The tropical hypersurface of a polynomial is the tropical hypersurface of its Newton polytope.

 The tropical hypersurface $T$
will be presented to us as a finite collection of codimension 1 cones which may overlap badly but whose union is $T$. What we wish to compute is a
collection of codimension 1 cones such that the collection of all
their faces is a polyhedral fan with support $T$. This fan is
not unique unless we require it to be the coarsest --- that is, that it is the normal fan of the polytope defining $T$ with its maximal cones removed. 
If the codimension $1$ cones come with a multiplicity then an advantage of having the fan structure is that it is straightforward to reconstruct the 1-skeleton of the polytope defining $T$, hence the vertices of the polytope, up to translation. Therefore we will consider the computations of the vertices of a polytope, the normal fan, and the tropical hypersurface with the coarsest fan structure to be equivalent in what follows.

One way to perform the polytope reconstruction is to use the beneath-beyond method for computing convex hulls. The key observation is that for any generic $\omega\in\RR^n$ the vertex $\textup{face}_\omega(\textup{New}(f))$ can be computed
% by the ``beep''-algorithm 
using ``ray shooting.'' See  \cite{DFS} and \cite{CTY}. 
The method we present in this paper uses the adjacency decomposition approach (see Section~\ref{sec:algorithms}) and the following algorithm for computing normal cones at vertices of the polytope defining $T$.
\begin{alg}[Region($S$,$\omega$)]
\label{alg:region}$ $\\
{\bf Input:} A collection $S$ of codimension 1 cones in $\RR^n$ such that $T:=\cup_{C\in S}C$ is the support of a tropical hypersurface. A vector $\omega\in\RR^n\setminus T$.\\
{\bf Output:} The (open) connected component of $\RR\setminus T$ containing $\omega$.
\begin{itemize}
\item $R:=\RR^n$.
\item For each $C\in S$: 
\begin{itemize}
\item While $R\cap C\not=\emptyset$:
\begin{itemize}
\item Find a point $p\in R\cap C$.
\item Introduce the parameter $\varepsilon>0$ and let $h$ be the open half line from $\omega$ through $p+\sum_{i=1}^n\varepsilon^i e_i$. 
\item The set of cones which intersect $h$ is the same for all $\varepsilon>0$ sufficiently small. Furthermore, the ordering of the intersection points along $h$ is fixed for $\varepsilon>0$ sufficiently small.
Among the cones that intersect $h$, let $D$ be a cone whose intersection point is closest to $\omega$.  (The choice of $D$ is not unique because the cones in $S$ need not form a fan and may overlap each other arbitrarily).
\item Let the halfspace $H\subset\RR^n$ be the connected component of $\RR^n\setminus\spann(D)$ containing $\omega$.
\item $R:=R\cap H$.
\end{itemize}
\end{itemize}
\item Return $R$.
\end{itemize}
\end{alg}
\begin{proof}
The set $R$ stays open and convex throughout the computation. At the end $R\cap T=\emptyset$. Each added constraint $H$ for $R$ is necessarily satisfied by the connected component because of its convexity. The symbolic perturbation of $p$ and the convexity of $R$ ensures that $H$ is independent of the choice of $D$, as all the possible choices of cones must be parallel. In fact, the set of constraints gives an irredundant inequality description of the returned cone.
\end{proof}

In computational geometry a standard way of handling the parameter $\varepsilon>0$ is to pass to the ordered field $\RR(\varepsilon)$. Since perturbed values are never multiplied together, there is no exponent growth. Indeed, the implementation is relatively simple.

\begin{pro}
\label{prop:numberOfChecks}
Let $a$ be the number of facets of the closure of the returned cone of Algorithm~\ref{alg:region}. The number of checks ``$R\cap C\not=\emptyset$'' performed in algorithm is $|S|+a$ while the number of interior point computations ``$p\in R\cap C$'' is $a$.
\end{pro}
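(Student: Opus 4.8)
The plan is to bookkeep the control flow of Algorithm~\ref{alg:region} and match each counted operation to the face structure of the returned cone. First I would observe that the outer \textbf{for} loop runs exactly $|S|$ times, once per $C\in S$, and that for each such $C$ the inner \textbf{while} loop is entered, runs some number of body iterations (possibly zero), and then exits precisely when the test ``$R\cap C\neq\emptyset$'' evaluates to ``false''. Hence the ``$R\cap C\neq\emptyset$'' checks split into the ``false'' outcomes --- exactly one per $C$, so $|S|$ of them --- and the ``true'' outcomes, whose number equals the total number $b$ of \textbf{while}-body iterations over the whole run. Each body iteration performs exactly one interior-point computation ``$p\in R\cap C$'' and appends exactly one halfspace $H$ to $R$. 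So the number of interior-point computations is $b$, the number of checks is $b+|S|$, and everything reduces to proving $b=a$.

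For $b=a$ I would invoke the analysis already contained in the correctness proof of Algorithm~\ref{alg:region}: the halfspaces $H$ appended during the run form an \emph{irredundant} inequality description of the closure of the returned cone. An irredundant inequality description of a polyhedral cone with $a$ facets uses exactly $a$ inequalities, so $b$, which is the number of halfspaces appended, equals $a$, and the proposition follows. To keep this self-contained I would spell out the two facts packaged in that statement. (i) Each body iteration \emph{strictly} shrinks $R$: the point $p$ it produces lies in $R$ but not in the appended open halfspace $H$, because either $D=C$, whence $p\in C\subseteq\spann(D)=\partial H$, or $D$ is met by the ray $h$ strictly closer to $\omega$ than the crossing near $p$, whence $\spann(D)$ strictly separates $\omega$ from $p$; in either case $p\in R\setminus H$. (ii) No bounding hyperplane $\spann(D)$ is ever reused, since reusing one would re-impose the same halfspace (the component of $\RR^n\setminus\spann(D)$ containing $\omega$) that $R$ already satisfies, contradicting (i); as there are at most $|S|$ candidate hyperplanes $\spann(D)$ with $D\in S$, fact (ii) also shows $b\leq|S|<\infty$, so each inner \textbf{while} loop terminates, which is what justifies the bookkeeping above.

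The step I expect to be the main obstacle is the clean version of fact (i): one must verify that, for all sufficiently small $\varepsilon>0$, the perturbed ray $h$ from $\omega$ through $p+\sum_{i=1}^n\varepsilon^i e_i$ actually meets a cone of $S$, and that the first cone $D$ it meets has $\spann(D)$ weakly separating $\omega$ from $p$, including the degenerate situations where $p$ sits on a low-dimensional face shared by several cones or where the unperturbed ray through $p$ is tangent to a cone. This is a transversality-after-perturbation statement, and it is essentially what is already used when Algorithm~\ref{alg:region} is shown to be correct (``the symbolic perturbation of $p$ and the convexity of $R$ ensures that $H$ is independent of the choice of $D$''); so my preference would be to cite that argument and devote the proof of the proposition itself to the combinatorial accounting $b=a$ and to the termination claim.
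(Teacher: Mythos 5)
Your proposal is correct and follows essentially the same route as the paper: count one failed check per cone in $S$ (giving $|S|$), note that each while-body iteration contributes exactly one further check, one interior-point computation, and one added constraint, and then identify the number of body iterations with $a$ via the fact (asserted in the correctness proof of Algorithm~\ref{alg:region}) that the added constraints form an irredundant, i.e.\ facet-defining, description of the returned cone. The paper's proof is just a terser version of this accounting; your additional observations on strict shrinking, non-reuse of hyperplanes, and termination are consistent supplements rather than a different method.
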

\begin{proof}
The check is done for every cone in $C\in S$. In addition, whenever the algorithm enters the body of the while loop, a facet constraint $H$ is added to $R$, and an additional check ``$R\cap C\not=\emptyset$'' and a computation of $p$ is performed.
\end{proof}

The condition that the generic $h$ intersects a given polyhedral cone $C$ can be phrased as a condition on the ordering in which $h$ intersects the defining hyperplanes of $C$.  We can imagine moving a point starting from $\omega$ and along the half-line $h$, keeping track of which equations and inequalities defining $C$ are satisfied and updating when a defining hyperplane of $C$ is crossed.
Hence the implementation reduces to a check of the order in which $h$ intersects two given hyperplanes. The perturbation in such a check is not difficult to handle symbolically. The check can be used again to actually find a $D$ in the algorithm with intersection point closest to $\omega$.

To apply the adjacency decomposition approach we must be able to compute a starting cone and move across ridges to find neighboring cones, while computing links at ridges is trivial for complete fans. To find a starting cone we guess a vector outside $T$ and apply Algorithm~\ref{alg:region}. Suppose now that $C$ is a full dimensional cone in the normal fan and $u$ is a relative interior point on a facet of $C$ with outer normal vector $v$. For $\varepsilon>0$ sufficiently small, calling Algorithm~\ref{alg:region} with argument $u+\varepsilon v$ will give us the desired neighboring cone. In our implementation we again use comparison of intersection points on line segments to find an $\varepsilon$ sufficiently small to avoid all hyperplanes appearing in the description of $T$.

If we precompute generators for the cones in $S$ then most of the checks for empty intersection with $R$ can be done without using linear programming, but rather for each defining hyperplane of $R$ checking if the cone generators are completely contained on the wrong side. In our current implementation the time spent on finding first intersections along the half-lines is comparable to the time spent on linear programming. We present two examples to illustrate the usability of our algorithm. These examples appeared earlier in the literature.
%The timings are for a traversal implementation where Algorithm~\ref{alg:region} is called once each full-dimensional cone (or orbit).

\begin{ex}
\label{ex:hyperdeterminant}
The f-vector of the tropical hypersurface of the $2\times 2\times 2\times 2$ hyperdeterminant was computed in \cite{HSYY}. The support of the hypersurface is the sum of a tropical linear space and a classical linear space in $\RR^{16}$ and is easy to write as a union of cones. We reconstruct the 25448 normal cones of the Newton polytope of the defining equation in 163 minutes. Exploiting the 384 order symmetry as explained in \cite{traversingsymmetricfans} we reduce the running time to 7 minutes for computing the 111 orbits of maximal cones. With suitable input files the following Gfan command \cite{gfan} will compute the f-vector.  See also Section~\ref{sec:comparison} for further details.

\begin{footnotesize}
\begin{verbatim}
anders@gureko:~$ gfan_tropicalhypersurfacereconstruction -i troplinspc.fan
--sum --symmetry <claslinspc_and_symmetry.txt | grep -A1 F_VECTOR
F_VECTOR
1 268 5012 39680 176604 495936 927244 1176976 1005946 555280 178780 25448
\end{verbatim}
\end{footnotesize}
\end{ex}

\begin{ex}
\label{ex:implicitizationchallenge}
The implicitization challenge solved in \cite{CTY} is to reconstruct the Newton polytope of the defining equation of a tropical variety given as a union of $6 865 824$ cones. This $11$-dimensional polytope lives in $\RR^{16}$ and has a symmetry group of order $384$.  Its vertices come in 44938 orbits. In \cite{CTY}, a modified version of the ray-shooting method was used to produce coordinates of the vertices at a rate of a few (2-5) minutes per vertex.  Each round took about 45 minutes found 10-20 vertices typically.  However, a lot more computation, with some human interaction and parallelization, over a period of a few months was required to make sure that all the vertices were discovered, and this was done by computing the tangent cone at each found vertex, up to symmetry.  During the process most vertices were re-discovered multiple times.

On this example our new implementation in Gfan spends approximately 1 minute for each call of Algorithm~\ref{alg:region}. We estimate that the enumeration of the $44 938$ orbits would finish after 30 days of computation.  With the new method, we do not need to process a vertex more than once, and we obtain all the facet directions as the rays in the normal fan and all the tangent cones as duals of the normal cones.  Moreover, there is no post-processing needed to certify that all vertices have been found.
\end{ex}

The method we just described does not make use of multiplicities.  In fact, it is not necessary that the fan is polytopal, or even locally balanced.  We only require that each connected component of the complement of $T$ is convex.  

Before settling with Algorithm~\ref{alg:region} we also experimented with storing the codimension one cones in a \emph{binary space partitioning tree} (BSP tree). See \cite{Thibault} for a definition of BSP trees and an application to a computational geometry problem in arbitrary dimension. The tree would be built at initialization, and the connected components of the complement could be computed by gathering convex regions stored in the tree. This method worked as well as Algorithm~\ref{alg:region} in small dimensions, but in higher dimensions, like the examples above, Algorithm~\ref{alg:region} would always perform better. In Example~\ref{ex:hyperdeterminant} the difference would be a factor of five without exploiting symmetry. But in Example~\ref{ex:implicitizationchallenge} the number of required nodes of the tree would grow too large to have any chance of fitting in memory. The intuition behind the explosion in complexity is that cones (for example, simplicial cones of codimension one) in a higher dimensional space have larger chances of intersecting a fixed hyperplane.  Therefore in the process of building the BSP tree, a codimension one cone from the input will meet many other hyperplanes coming from other cones, causing an explosion in the number of nodes in the BSP tree.

\section{Comparison of algorithms}

\label{sec:comparison}

In this section, we consider various algorithms and compare the combinatorial complexity of the output (e.g.\ f-vector) and running time (recorded on a laptop computer with a 2.66 GHz Intel Core i5 processor and 8GB of memory).  All implementations are single threaded, done in C++ using cddlib \cite{cdd} and SoPlex \cite{wunderling}, and will be part of Gfan in its next release, unless otherwise noted. The combinatorial complexity of the output is essential for a fair comparison since different amounts of effort went into making each of the implementations fast. We mostly concentrated on the implementation of Algorithm~\ref{alg:region} and the secondary fan computation because of their broad range of applications, while less optimization effort has gone into algorithms specific to tropical resultants.

In general, the software Gfan uses the {\em max} convention for tropical varieties and Gr\"obner fans.  However, for the fact that the secondary fan of a point configuration is a coarsening of the Gr\"obner fan of the associated binomial (lattice) ideal, we need the subdivisions to be defined with respect to {\em min} if  the initial ideals are defined with respect to {\em max}.  Therefore Gfan uses {\em min} for secondary fans.  As tropical resultants are subfans of secondary fans, we chose to use {\em min} in this paper for tropical addition.

\subsection*{Hypersurfaces}

Let us first consider the case where the resultant variety $\cR(\cA)$ is a hypersurface.  Following is a list of different methods for computing the resultant polytope (or its tropical hypersurface or its normal fan).
\begin{enumerate}
\item Enumerating the vertices of the secondary polytope of $\Cay(\cA)$, and then using Sturmfels' formula  \cite[Theorem~2.1]{Sturmfels94} to obtain the vertices of the resultant polytope. We did not make an implementation but list only the time spent computing the secondary fan with the Gfan command
%For our timings we only do the first part of the computation using the Gfan command

\begin{footnotesize}
\begin{verbatim}
gfan_secondaryfan <cayley.txt
\end{verbatim}
\end{footnotesize}

\item Computing the tropical hypersurface of the resultant as a subfan of the secondary fan by fan traversal using the methods described in Section~\ref{sec:algorithms}.

\begin{footnotesize}
\begin{verbatim}
gfan_resultantfan --vectorinput <tuple.txt
\end{verbatim}
\end{footnotesize}

\item Constructing the normal fan of the resultant polytope from the simple description of the tropical resultant as a union of cones as in Theorem~\ref{thm:orthants}. Our implementation in Gfan uses Algorithm~\ref{alg:region} for this.

\begin{footnotesize}
\begin{verbatim}
gfan_resultantfan --vectorinput --projection <tuple.txt
\end{verbatim}
\end{footnotesize}

\item Using Sturmfels' formula \cite[Theorem~2.1]{Sturmfels94} for finding the optimal vertex of the resultant polytope in a generic direction together with the beneath-beyond convex hull algorithm for recovering the whole polytope. The software ResPol \cite{EFKP} is a recent implementation of this method using the CGAL library.

\end{enumerate}

% Note that these three methods have different natural outputs. Only the third and forth methods output the complete combinatorial structure (i.e.\ the f-vector) of the resultant polytope.
For the third approach, one can also use other methods for reconstructing a polytope from its tropical hypersurface, such as ray-shooting/beneath-beyond and BSP trees, as discussed in Section~\ref{sec:polytopeReconstruction}, although we found Algorithm~\ref{alg:region} to perform better, especially for polytopes of dimension 5 or more (compared to beneath-beyond in iB4e \cite{Huggins} and BSP).

For Example~\ref{ex:3tri} above, each of the first three methods finished in under one second in Gfan.  We present more challenging examples below. In the examples each matrix represents the point configuration consisting of its columns.

\smallskip
\noindent {\bf Example (a).} 
$$ \cA=\left(\bgroup\begin{pmatrix}0&1&3\\0&0&1\\1&1&1\\\end{pmatrix}\egroup,\bgroup\begin{pmatrix}0&0&1\\0&2&1\\3&2&0\\\end{pmatrix}\egroup,\bgroup\begin{pmatrix}0&2&2\\2&1&2\\3&1&1\\\end{pmatrix}\egroup,\bgroup\begin{pmatrix}1&2&2\\2&0&3\\1&0&2\\\end{pmatrix}\egroup\right)$$

%\begin{verbatim}Q[a,b,c]
%{a^3*b*c+a*c+c, b^2*c^2+c^3+a*b, a^2*b^2*c+b^2*c^3+a^2*b*c, 
%a^2*b^3*c^2+a*b^2*c+a^2}\end{verbatim}
% {
% (1,0,0,0,0,0,1),
% (1,0,0,0,1,0,1),
% (1,0,0,0,3,1,1),
% (0,1,0,0,0,0,3),
% (0,1,0,0,0,2,2),
% (0,1,0,0,1,1,0),
% (0,0,1,0,0,2,3),
% (0,0,1,0,2,1,1),
% (0,0,1,0,2,2,1),
% (0,0,0,1,1,2,1),
% (0,0,0,1,2,0,0),
% (0,0,0,1,2,3,2)}
\noindent
\begin{footnotesize}
\begin{tabular}{|l|l|r|}
\hline
Method/fan & F-vector of output & Timing \\
\hline
% (1) secondary fan  & 1 10432 55277 106216 88509 27140 & 592 s\\
% (2) traversing tropical resultant  & 1 5152 21406 28777 12614 & 961 s\\
%(3) normal fan from simple description & 1 78 348 570 391 93  & 11 s\\
 (1) secondary fan  & 1 10432 55277 106216 88509 27140 & 467 s\\
 (2) traversing tropical resultant  & 1 5152 21406 28777 12614 & 733 s\\
(3) normal fan from simple description & 1 78 348 570 391 93  & 1.4 s\\
(4) beneath-beyond (ResPol) & 1 - - - - 93 & 2.7 s\\
\hline
\end{tabular}
\end{footnotesize}
\smallskip

\noindent {\bf Example (b).} 
 $$  \cA=\left(\bgroup\begin{pmatrix}0&0&1&3\\0&1&2&0\\\end{pmatrix}\egroup,\bgroup\begin{pmatrix}1&2&3&3\\1&2&0&1\\\end{pmatrix}\egroup,\bgroup\begin{pmatrix}0&1&2&3\\1&1&0&3\\\end{pmatrix}\egroup \right)$$

%\begin{verbatim}Q[a,b]
%{a^3+a*b^2+b+1,a^3*b+a^2*b^2+a^3+a*b,a^3*b^3+a^2+a*b+b}\end{verbatim}
% {
% (1,0,0,0,0),
% (1,0,0,0,1),
% (1,0,0,1,2),
% (1,0,0,3,0),
% (0,1,0,1,1),
% (0,1,0,2,2),
% (0,1,0,3,0),
% (0,1,0,3,1),
% (0,0,1,0,1),
% (0,0,1,1,1),
% (0,0,1,2,0),
% (0,0,1,3,3)}

\noindent
\begin{footnotesize}
\begin{tabular}{|l|l|r|}
\hline
Method/fan & F-vector of output & Timing \\
\hline
% (1) secondary fan  & 1 3048 38348 178426 407991 494017 304433 75283 & 701 s\\
% (2) tropical resultant & 1 2324 26316 106083 197576 173689 58451 &1976 s \\ 
%(3) normal fan&  1 56 497 1779 3191 3018 1412 249 & 51 s\\
 (1) secondary fan  & 1 3048 38348 178426 407991 494017 304433 75283 & 506 s\\
 (2) tropical resultant & 1 2324 26316 106083 197576 173689 58451 &1238 s \\ 
(3) normal fan&  1 56 497 1779 3191 3018 1412 249 & 6 s\\
(4) beneath-beyond (ResPol) & 1 - - - - - - 249 & 35 s\\
\hline
\end{tabular}
\end{footnotesize}

\smallskip
\noindent {\bf Example (c).}
$$ \cA=\left(\bgroup\begin{pmatrix}1&2&2\\1&2&3\\3&1&2\\1&2&2\\\end{pmatrix}\egroup,\bgroup\begin{pmatrix}1&3&3\\1&2&2\\0&1&3\\3&3&1\\\end{pmatrix}\egroup,\bgroup\begin{pmatrix}0&2&2\\2&0&2\\2&3&0\\1&3&0\\\end{pmatrix}\egroup,\bgroup\begin{pmatrix}1&1&3\\2&3&3\\0&1&0\\0&3&2\\\end{pmatrix}\egroup,\bgroup\begin{pmatrix}1&3&3\\3&2&2\\1&1&2\\3&0&2\\\end{pmatrix}\egroup\right) $$
% \begin{verbatim}
%Q[a,b,c,d]
%{a^2*b^3*c^2*d^2+a^2*b^2*c*d^2+a*b*c^3*d, 
%a^3*b^2*c^3*d+a^3*b^2*c*d^3+a*b*d^3,
%a^2*c^3*d^3+b^2*c^2*d+a^2*b^2, a^3*b^3*d^2+a*b^3*c*d^3+a*b^2,
%a^3*b^2*c^2*d^2+a*b^3*c*d^3+a^3*b^2*c} \end{verbatim}
% {
% (1,0,0,0,0,1,1,3,1),
% (1,0,0,0,0,2,2,1,2),
% (1,0,0,0,0,2,3,2,2),
% (0,1,0,0,0,1,1,0,3),
% (0,1,0,0,0,3,2,1,3),
% (0,1,0,0,0,3,2,3,1),
% (0,0,1,0,0,0,2,2,1),
% (0,0,1,0,0,2,0,3,3),
% (0,0,1,0,0,2,2,0,0),
% (0,0,0,1,0,1,2,0,0),
% (0,0,0,1,0,1,3,1,3),
% (0,0,0,1,0,3,3,0,2),
% (0,0,0,0,1,1,3,1,3),
% (0,0,0,0,1,3,2,1,0),
% (0,0,0,0,1,3,2,2,2)}

\noindent
\begin{footnotesize}
\begin{tabular}{|l|l|r|}
\hline
Method/fan & F-vector of output & Timing \\
\hline
%(3) normal fan from simple descr. &1 937 5257 11288 11572 5589 985   & 400 s  \\
(3) normal fan from simple descr. &1 937 5257 11288 11572 5589 985   & 55 s  \\
(4) beneath-beyond (ResPol) & 1 - - - - - 985 & 236 s\\
\hline
\end{tabular}
\end{footnotesize}
\smallskip 

\noindent In Example (c) we were not able to compute the secondary fan and the resultant fan with the secondary fan structure due to integer overflow in intermediate polyhedral computations. Gfan has been designed to work well for Gr\"obner fans, where the degrees of the polynomials are never very large, since that would prevent us from computing a single Gr\"obner basis anyway (except for binomial ideals). In Example (c), a primitive normal vector of a codimension 1 cone of the normal fan of the resultant is $(-32,0,32,27,0,-27,25,-25,0,0,51,-51,-87,0,87)$,
showing that the resultant has degree at least 32+27+25+51+87=222. On such examples overflows typically arise when trying to convert an exactly computed rational generator of a ray to a primitive vector of 32-bit integers. Algorithm~\ref{alg:region} will show similar behavior on other examples, for example when converting ``$p\in R\cap C$'' to a vector of 32-bit integers. We intend to fix these implementation problems in the future.
\smallskip

\subsection*{Hypersurfaces with Specialization}

If the specialized resultant is a hypersurface, then we can compute its tropical variety using the following methods.
\begin{enumerate}
\item Compute $\cT\cR_S(\cA)$ as a subfan of the restriction of the secondary fan to a subspace $U_S$ by fan traversal using the algorithms in Section~\ref{sec:special}.

\begin{footnotesize}
\begin{verbatim}
gfan_resultantfan --vectorinput --special <tuple_and_spcvec.txt
\end{verbatim}
\end{footnotesize}

\item Compute the stable intersection $\cT\cR_S(\cA) = \cT\cR(\cA) \stint \{U_S\}$ as a union of cones, using the simple description from Theorem~\ref{thm:orthants} and the characterization of stable intersections from Lemma~\ref{lem:computingstableintersections}.  Then reconstruct the normal fan of the dual polytope using Algorithm~\ref{alg:region}.

\begin{footnotesize}
\begin{verbatim}
gfan_resultantfan --vectorinput --special --projection <tup_and_sv.txt
\end{verbatim}
\end{footnotesize}

\item Compute the specialized tropical resultant as a union of cones using stable intersection of hypersurfaces and tropical elimination theory as in Section~\ref{sec:elim} and reconstruct the normal fan of the dual polytope using Algorithm~\ref{alg:region}.
%
%\begin{footnotesize}
%\begin{verbatim}
%gfan_tropicalstartingcone --stable, gfan_tropical_traverse --stable, 
%gfan_tropicalhypersurfacereconstruction --sum
%\end{verbatim}
%\end{footnotesize}
We combine the commands (see also \cite{SturmfelsYu08}):

\begin{footnotesize}
\begin{verbatim}
gfan_tropicalstartingcone --stable >startingcone.txt
gfan_tropicaltraverse --stable <startingcone.txt >stable.fan
gfan_tropicalhypersurfacereconstruction --sum -i stable.fan <lnspc.txt
\end{verbatim}
\end{footnotesize}

\item For a generic direction, Sturmfels' formula \cite[Theorem~2.1]{Sturmfels94} gives the optimal vertex of the resultant polytope in that direction, which can then be projected to get a point in the Newton polytope of the specialized resultant polynomial.  This can be combined with the beneath-beyond convex hull algorithm for recovering the whole polytope.  The software ResPol was used in the timings below.

\end{enumerate}
In \cite{EKP}, the authors proposed computing a {\em silhouette} or a projection of the secondary polytope.  This is dual to computing the restriction of the secondary fan to a subspace.  We provide the results and timings of this dual computation for comparison.  

%~\\

In the following examples specialized points are shown in non-black color.

\smallskip
\noindent{\bf Example (d).}
 $$ \cA=\left(\bgroup\begin{pmatrix}{\grey 0}&0&1&{\grey 1}\\{\grey 0}&1&0&{\grey 1}\\\end{pmatrix}\egroup,\bgroup\begin{pmatrix}0&1&{\grey 1}&{\grey 2}\\1&0&{\grey 1}&{\grey 2}\\\end{pmatrix}\egroup,\bgroup\begin{pmatrix}{\grey 0}&{\grey 1}&1&2\\{\grey 0}&{\grey 1}&2&1\\\end{pmatrix}\egroup\right) $$

%\begin{verbatim}
%Q[a,b]
%{1+b+a+ab, b+a+ab+a2b2, 1+ab+ab2+a2b}
%{(1,0,0,0,0),(1,0,0,0,1),(1,0,0,1,0),(1,0,0,1,1),(0,1,0,0,1),
%(0,1,0,1,0),(0,1,0,1,1),(0,1,0,2,2),(0,0,1,0,0),(0,0,1,1,1),
%(0,0,1,1,2),(0,0,1,2,1)}
%(1,0,0,1,0,0,1,1,1,1,0,0)
%\end{verbatim}

\noindent
\begin{footnotesize}
\begin{tabular}{|l|l|r|}
\hline
Method/fan & F-vector & Timing \\
\hline
%Restriction of secondary fan &1 372 2514 5829 5661 1976&46 s\\
%(1) traversing tropical resultant  &1 126 476 561 212&25 s\\
%(2) normal fan from stable intersection&1 25 127 250 211 65 &3 s\\
%(3) normal fan from tropical elimination&1 25 127 250 211 65&3 s\\
Restriction of secondary fan &1 372 2514 5829 5661 1976&26 s\\
(1) traversing tropical resultant  &1 126 476 561 212&14 s\\
(2) normal fan from stable intersection&1 25 127 250 211 65 &0.7 s\\
(3) normal fan from tropical elimination&1 25 127 250 211 65&1.4 s\\
(4) beneath-beyond (ResPol) & 1 - - - - 65 & 0.5 s\\
\hline
\end{tabular}
\end{footnotesize}

\smallskip
\noindent {\bf Example (e).}
 $$ \cA=\left(\bgroup\begin{pmatrix}{\grey 0}&0&1&{\grey 1}\\{\grey 0}&1&0&{\grey 1}\\\end{pmatrix}\egroup,\bgroup\begin{pmatrix}0&1&{\grey 1}&{\grey 2}\\1&0&{\grey 1}&{\grey 2}\\\end{pmatrix}\egroup,\bgroup\begin{pmatrix}{\grey 0}&1&1&2\\{\grey 0}&1&2&1\\\end{pmatrix}\egroup\right) $$
%\begin{verbatim}
%Q[a,b]
%{1+b+a+ab, b+a+ab+a2b2, 1+ab+ab2+a2b}
%{
%(1,0,0,0,0),(1,0,0,0,1),(1,0,0,1,0),(1,0,0,1,1),
%(0,1,0,0,1),(0,1,0,1,0),(0,1,0,1,1),(0,1,0,2,2),
%(0,0,1,0,0),(0,0,1,1,1),(0,0,1,1,2),(0,0,1,2,1)}
%(1,0,0,1,0,0,1,1,1,0,0,0)
%\end{verbatim}

\noindent
\begin{footnotesize}
\begin{tabular}{|l|l|r|}
\hline
Method/fan & F-vector & Timing \\
\hline
%Restriction of secondary fan &1 709 6955 24354 39464 30226 8870&200 s\\
%(1) traversing tropical resultant &1 469 3993 11296 12853 5040&577 s\\
%(2) normal fan from stbl.\ inters.&1 29 209 597 792 485 110&8 s\\
%(3) normal fan from trop.\ elim.&1 29 209 597 792 485 110&12 s\\
Restriction of secondary fan &1 709 6955 24354 39464 30226 8870&116 s\\
(1) traversing tropical resultant &1 469 3993 11296 12853 5040&320 s\\
(2) normal fan from stbl.\ inters.&1 29 209 597 792 485 110&1.3 s\\
(3) normal fan from trop.\ elim.&1 29 209 597 792 485 110&3.2 s\\
(4) beneath-beyond (ResPol) &1 - - - - - 110 & 2.3 s\\
\hline
\end{tabular}
\end{footnotesize}

%\smallskip

%\noindent For (3), computing the specialized tropical incidence variety took 2 seconds and the normal fan reconstruction took 10 seconds.

\smallskip
\noindent {\bf Example (f).}
 $$\cA=\left(\bgroup\begin{pmatrix}{\grey 1}&1&2&3\\{\grey 2}&2&3&2\\{\grey 0}&2&1&2\\\end{pmatrix}\egroup,\bgroup\begin{pmatrix}{\grey 0}&0&1&1\\{\grey 1}&2&1&1\\{\grey 0}&2&1&3\\\end{pmatrix}\egroup,\bgroup\begin{pmatrix}{\grey 1}&1&2&3\\{\grey 1}&3&3&2\\{\grey 1}&1&0&1\\\end{pmatrix}\egroup,\bgroup\begin{pmatrix}{\grey 1}&1&3&3\\{\grey 0}&2&0&1\\{\grey 3}&2&1&1\\\end{pmatrix}\egroup\right) $$ 
%\begin{verbatim}
%Q[a,b,c]{a^3*b^2*c^2+a^2*b^3*c+a*b^2*c^2+a*b^2, 
%a*b*c^3+b^2*c^2+a*b*c+b, a^3*b^2*c+a^2*b^3+a*b^3*c+a*b*c, 
%a^3*b*c+a*b^2*c^2+a^3*c+a*c^3}
%{
%(1,0,0,0,1,2,0),(1,0,0,0,1,2,2),(1,0,0,0,2,3,1),
%(1,0,0,0,3,2,2),(0,1,0,0,0,1,0),(0,1,0,0,0,2,2),
%(0,1,0,0,1,1,1),(0,1,0,0,1,1,3),(0,0,1,0,1,1,1),
%(0,0,1,0,1,3,1),(0,0,1,0,2,3,0),(0,0,1,0,3,2,1),
%(0,0,0,1,1,0,3),(0,0,0,1,1,2,2),(0,0,0,1,3,0,1),
%(0,0,0,1,3,1,1)}
%(1,0,0,0,1,0,0,0,1,0,0,0,1,0,0,0)
%\end{verbatim}

\noindent
\begin{footnotesize}
\begin{tabular}{|l|l|r|}
\hline
Method & F-vector & Timing \\
\hline
%(2)&1 1566 19510 98143 265202 424620 413455 238425 73741 9156&5152 s\\
%(3)&1 1566 19510 98143 265202 424620 413455 238425 73741 9156&6666 s\\
(2)&1 1566 19510 98143 265202 424620 413455 238425 73741 9156&798 s\\
(3)&1 1566 19510 98143 265202 424620 413455 238425 73741 9156&974 s\\
\hline
\end{tabular}
\end{footnotesize}
\smallskip 

\noindent The current version of ResPol could not complete the computation for this example. Furthermore, we could not apply method (1) because of 32-bit integer overflows as explained in Example~(c).

\subsection*{Implicitization of hypersurfaces} Implicitization is a special case of the specialized resultants, and we compare the three methods as before.
\smallskip

\noindent{\bf Example (g).} (Implicitization of a bicubic surface \cite[Example 3.4]{EmirisKotsireas})
$$ 
 \cA=\left({\bgroup\begin{pmatrix}0&{\grey 0}&{\grey 0}&{\grey 0}&{\grey 1}&{\grey 2}&{\grey 3}\\
                      0&{\grey 1}&{\grey 2}&{\grey 3}&{\grey 0}&{\grey 0}&{\grey 0}\\\end{pmatrix}\egroup,
\bgroup\begin{pmatrix}0&{\grey 0}&{\grey 0}&{\grey 1}&{\grey 2}&{\grey 3}\\
                      0&{\grey 1}&{\grey 3}&{\grey 0}&{\grey 0}&{\grey 0}\\\end{pmatrix}\egroup,}\right.$$
$$\left.\bgroup\begin{pmatrix}0&{\grey 0}&{\grey 0}&{\grey 1}&{\grey 1}&{\grey 1}&{\grey 1}&{\grey 2}&{\grey 2}&{\grey 2}&{\grey 2}&{\grey 3}&{\grey 3}&{\grey 3}\\
                      0&{\grey 1}&{\grey 2}&{\grey 0}&{\grey 1}&{\grey 2}&{\grey 3}&{\grey 0}&{\grey 1}&{\grey 2}&{\grey 3}&{\grey 1}&{\grey 2}&{\grey 3}\\\end{pmatrix}
\egroup\right)$$ 
%\begin{verbatim}
%Q[s,t]
%{s^3+3*t^3-3*s^2-6*t^2+6*s+3*t-1, 3*s^3+t^3-6*s^2+3*s+3*t-1,
%-3*s^3*t^3-3*s^3*t^2+15*s^2*t^3+6*s^3*t-18*s^2*t^2-15*s*t^3
%+9*s^2*t+27*s*t^2-3*s^2-18*s*t-3*t^2+3*s+3*t-1}
%(0,1,1,1,1,1,1,0,1,1,1,1,1,0,1,1,1,1,1,1,1,1,1,1,1,1,1)
%\end{verbatim} 

\noindent
\begin{footnotesize}
\begin{tabular}{|l|l|r|r|}
\hline
Method/fan & F-vector & Timing & No interior points \\
\hline
%Restriction of secondary fan &1 26 66 42& 8 s & 2 s\\
%(1) traversing tropical resultant & 1 13 17&24 s & 5 s\\
%(2) normal fan from stable inters.&1 5 9 6 & 510 s & 27 s\\
%(3) normal fan from tropical elim. &1 5 9 6& $<$ 1 s & $<$ 1 s\\
%(4) beneath-beyond &1 5 9 6& $<$ 1 s & $<$ 1 s\\
Restriction of secondary fan &1 26 66 42& 5 s & 2 s\\
(1) traversing tropical resultant & 1 13 17&16 s & 4 s\\
(2) normal fan from stable inters.&1 5 9 6 & 171 s & 9 s\\
(3) normal fan from tropical elim. &1 5 9 6& 0.4 s & 0.4 s\\
(4) beneath-beyond (ResPol) &1 5 9 6& $<$ 0.1 s & $<$ 0.1 s\\
\hline
\end{tabular}
\end{footnotesize}

\smallskip

\noindent As we saw in Corollary~\ref{cor:impExtreme}, removing the non-extreme monomials from the parameterizing polynomials does not change the resultant polytope, and in this example, this also does not change the restriction of the secondary fan.  However, doing so speeds up the computations, as seen on the right most column. 

\smallskip

\noindent {\bf Example (h).} (Implicitization of a hypersurface in four dimensions)
 $$\cA=\left(\bgroup\begin{pmatrix}0&{\grey 0}&{\grey 2}&{\grey 4}\\0&{\grey 2}&{\grey 4}&{\grey 1}\\0&{\grey 2}&{\grey 4}&{\grey 1}\\\end{pmatrix}\egroup,\bgroup\begin{pmatrix}0&{\grey 1}&{\grey 2}&{\grey 3}\\0&{\grey 2}&{\grey 2}&{\grey 0}\\0&{\grey 1}&{\grey 4}&{\grey 1}\\\end{pmatrix}\egroup,\bgroup\begin{pmatrix}0&{\grey 2}&{\grey 3}&{\grey 4}\\0&{\grey 4}&{\grey 0}&{\grey 1}\\0&{\grey 2}&{\grey 4}&{\grey 2}\\\end{pmatrix}\egroup,\bgroup\begin{pmatrix}0&{\grey 0}&{\grey 4}&{\grey 4}\\0&{\grey 2}&{\grey 2}&{\grey 3}\\0&{\grey 4}&{\grey 2}&{\grey 3}\\\end{pmatrix}\egroup\right) $$
%\begin{verbatim}
%Q[a,b,c,A,B,C,D]{-a^2*b^4*c^4-a^4*b*c-b^2*c^2+A,-a^2*b^2*c^4-a^3*c-a*b^2*c+B,-a%^2*b^4*c^2-a^4*b*c^2-a^3*c^4+C,-a^4*b^3*c^3-a^4*b^2*c^2-b^2*c^4+D}
%{{(0,0,0), (2,4,4), (4,1,1), (0,2,2)}, {(0,0,0), (2,2,4), (3,0,1), (1,2,1)}, 
%{(0,0,0), (2,4,2), (4,1,2), (3,0,4)},{(0,0,0), (4,3,3), (4,2,2), (0,2,4)}}
%(0,1,1,1,0,1,1,1,0,1,1,1,0,1,1,1)
%\end{verbatim}

\noindent
\begin{footnotesize}
\begin{tabular}{|l|l|r|}
\hline
Method/fan & F-vector & Timing \\
\hline
%%restriction of secondary fan & &\\
%(2) normal fan from stable intersection & 1 111 358 368 121& 26 s\\
%(3) normal fan from tropical elimination &1 111 358 368 121& 8 s\\
%(4) beneath-beyond &1 111 358 368 121& 1.3 s\\
(1) traversing tropical resultant & 1 10665 24204 13660 & 2 h 10 m\\
(2) normal fan from stable intersection & 1 111 358 368 121& 9 s\\
(3) normal fan from tropical elimination &1 111 358 368 121& 2.6 s\\
(4) beneath-beyond (ResPol) &1 111 358 368 121& 1.5 s\\
\hline
\end{tabular}
\end{footnotesize}

\smallskip
\noindent For (3), computing the polytope from the tropical hypersurface using ray-shooting and beneath-beyond took 47 s in the TrIm implementation \cite{SturmfelsYu08} using the library iB4e \cite{Huggins} on a slightly slower machine.

\smallskip

\noindent {\bf Example (i).} (Implicitization of a hypersurface in five dimensions)

\begin{footnotesize}
 $$\cA = \left(\bgroup\begin{pmatrix}0&{\grey 1}&{\grey 3}&{\grey 4}\\0&{\grey 1}&{\grey 4}&{\grey 4}\\0&{\grey 2}&{\grey 2}&{\grey 4}\\0&{\grey 2}&{\grey 4}&{\grey 0}\\\end{pmatrix}\egroup,\bgroup\begin{pmatrix}0&{\grey 0}&{\grey 1}&{\grey 3}\\0&{\grey 0}&{\grey 2}&{\grey 3}\\0&{\grey 1}&{\grey 1}&{\grey 3}\\0&{\grey 1}&{\grey 2}&{\grey 3}\\\end{pmatrix}\egroup,\bgroup\begin{pmatrix}0&{\grey 0}&{\grey 2}&{\grey 3}\\0&{\grey 1}&{\grey 4}&{\grey 2}\\0&{\grey 1}&{\grey 1}&{\grey 1}\\0&{\grey 4}&{\grey 2}&{\grey 3}\\\end{pmatrix}\egroup,\bgroup\begin{pmatrix}0&{\grey 1}&{\grey 2}&{\grey 3}\\0&{\grey 1}&{\grey 4}&{\grey 2}\\0&{\grey 0}&{\grey 1}&{\grey 0}\\0&{\grey 1}&{\grey 3}&{\grey 3}\\\end{pmatrix}\egroup,\bgroup\begin{pmatrix}0&{\grey 0}&{\grey 2}&{\grey 4}\\0&{\grey 4}&{\grey 1}&{\grey 3}\\0&{\grey 3}&{\grey 4}&{\grey 3}\\0&{\grey 1}&{\grey 3}&{\grey 1}\\\end{pmatrix}\egroup\right) $$

\end{footnotesize}
%\begin{verbatim}
%Q[a,b,c,d,A,B,C,D,E]{-a^3*b^4*c^2*d^4-a^4*b^4*c^4-a*b*c^2*d^2+A,
%-a^3*b^3*c^3*d^3-a*b^2*c*d^2-c*d+B,-a^2*b^4*c*d^2-a^3*b^2*c*d^3-b*c*d^4+C,
%-a^2*b^4*c*d^3-a^3*b^2*d^3-a*b*d+D,-a^4*b^3*c^3*d-a^2*b*c^4*d^3-b^4*c^3*d+E}
%{{(0,0,0,0), (3,4,2,4), (4,4,4,0), (1,1,2,2)}, {(0,0,0,0), (3,3,3,3),
% (1,2,1,2), (0,0,1,1)}, {(0,0,0,0), (2,4,1,2), (3,2,1,3), (0,1,1,4)}, 
%{(0,0,0,0), (2,4,1,3), (3,2,0,3), (1,1,0,1)}, {(0,0,0,0), (4,3,3,1), 
%(2,1,4,3), (0,4,3,1)}}
%(0,1,1,1,0,1,1,1,0,1,1,1,0,1,1,1,0,1,1,1)
%\end{verbatim}

\noindent
\begin{footnotesize}
\begin{tabular}{|l|l|r|}
\hline
Method/fan & F-vector & Timing \\
\hline
%%restriction of secondary fan & &\\
%(2) normal fan from stable inters. & 1 5932 23850 35116 22289 5093&1151 s\\
%(3) normal fan from tropical elim. &1 5932 23850 35116 22289 5093&1037 s\\
%(4) beneath-beyond &1 5932 23850 35116 22289 5093& 1241 s\\
(2) normal fan from stable inters. & 1 5932 23850 35116 22289 5093&351 s\\
(3) normal fan from tropical elim. &1 5932 23850 35116 22289 5093&184 s\\
(4) beneath-beyond (ResPol) &1 5932 23850 35116 22289 5093& 898 s\\
\hline
\end{tabular}
\end{footnotesize}

\smallskip
\noindent For (3), timing includes 17 seconds for computing the specialized tropical incidence variety.  The normal fan reconstruction computation in TrIm with iB4e took 3375 seconds on a slightly slower machine. 

\subsection*{Non-hypersurfaces}

When $\cR(\cA)$ is not a hypersurface, the only method we know for computing $\cT\cR(\cA)$ with a fan structure without knowing the defining ideal is to traverse the secondary fan of $\Cay(\cA)$ and enumerating just the secondary cones whose RMS contains a fully mixed cell.  There are other descriptions of tropical resultants as a set, such as Theorem~\ref{thm:orthants}, but none gives a fan structure.

\noindent {\bf Example (j).}
 $$\cA = \left(\bgroup\begin{pmatrix}0&2&4\\4&1&1\\\end{pmatrix}\egroup,\bgroup\begin{pmatrix}3&5&5\\1&0&4\\\end{pmatrix}\egroup,\bgroup\begin{pmatrix}3&4&5\\1&5&2\\\end{pmatrix}\egroup,\bgroup\begin{pmatrix}0&1&2\\4&3&5\\\end{pmatrix}\egroup\right) $$ 

\noindent
\begin{footnotesize}
\begin{tabular}{|l|l|r|}
\hline
Method/fan & F-vector & Timing \\
\hline
%Secondary fan &1 8876 72744 222108 322303 225040 60977&731 s\\
%Traversing tropical result. & 1 968 4495 6523 3000 & 116 s\\
Secondary fan &1 8876 72744 222108 322303 225040 60977&478 s\\
Traversing tropical result. & 1 968 4495 6523 3000 & 81 s\\
\hline
\end{tabular}
\end{footnotesize}

\smallskip
\noindent
We used, respectively, the commands:
\begin{footnotesize}
\begin{verbatim}
gfan_secondaryfan <cayley.txt
gfan_resultantfan --vectorinput <tuple.txt
\end{verbatim}
\end{footnotesize}

\subsection*{Non-hypersurfaces with Specialization}

The only method here is to traverse $\cT\cR_S(\cA)$ as a subfan of a restriction of the secondary fan using the algorithms in Section~\ref{sec:special}.

\noindent {\bf Example (k).}
 $$ \cA=\left(\bgroup\begin{pmatrix}{\grey 0}&{\grey 2}&4\\{\grey 4}&{\grey 1}&1\\\end{pmatrix}\egroup,\bgroup\begin{pmatrix}{\grey 3}&{\grey 5}&5\\{\grey 1}&{\grey 0}&4\\\end{pmatrix}\egroup,\bgroup\begin{pmatrix}{\grey 3}&{\grey 4}&5\\{\grey 1}&{\grey 5}&2\\\end{pmatrix}\egroup,\bgroup\begin{pmatrix}{\grey 0}&1&2\\{\grey 4}&3&5\\\end{pmatrix}\egroup\right) $$ 

%\begin{verbatim}Q[a,b]
%{a^4*b+b^4+a^2*b, a^5*b^4+a^5+a^3*b, 
%a^4*b^5+a^5*b^2+a^3*b, a^2*b^5+a*b^3+b^4}
%(1,0,0,1,0,0,1,0,0,1,0,0)
%{
%(1,0,0,0,0,4),
%(1,0,0,0,2,1),
%(1,0,0,0,4,1),
%(0,1,0,0,3,1),
%(0,1,0,0,5,0),
%(0,1,0,0,5,4),
%(0,0,1,0,3,1),
%(0,0,1,0,4,5),
%(0,0,1,0,5,2),
%(0,0,0,1,0,4),
%(0,0,0,1,1,3),
%(0,0,0,1,2,5)}
%\end{verbatim}

\noindent
\begin{footnotesize}
\begin{tabular}{|l|l|r|}
\hline
Method/fan & F-vector & Timing \\
\hline
%Restriction of secondary fan & 1 4257 23969 48507 42260 13467 & 358 s\\
%Traversing spec. tropical result. & 1 310 831 533 & 122 s\\
Restriction of secondary fan & 1 4257 23969 48507 42260 13467 & 256 s\\
Traversing spec.\ tropical result. & 1 310 831 533 & 81 s\\
\hline
\end{tabular}
\end{footnotesize}

\smallskip
\noindent
We used, respectively, the commands:
\begin{footnotesize}
\begin{verbatim}
gfan_secondaryfan --restrictingfan subspace.fan <cayley.txt
gfan_resultantfan --vectorinput --special <tup_and_sv.txt
\end{verbatim}
\end{footnotesize}

\subsection{Conclusion}

The new method of using adjacency decomposition with Algorithm~\ref{alg:region} for constructing the normal fan of a polytope from it tropical hypersurface works very well in practice.  Our implementation is much faster than any existing implementation of the beneath-beyond method with ray-shooting for polytope reconstruction, and we think the gap will widen even more in higher dimension since this new method scales well --- multi-linearly with respect to the number of cones in input and the number of vertices and edges of the output polytope, as shown in Proposition~\ref{prop:numberOfChecks}.

The normal fan reconstruction method can be used together with either the simple description of tropical resultants (Theorem~\ref{thm:orthants}) or tropical elimination (Section~\ref{sec:elim}) for computing resultant polytopes efficiently.  Traversing the (specialized) tropical resultant as a subfan of (a restriction of) the secondary fan of the Cayley configuration is combinatorially interesting but not computationally competitive.  

For implicitization, the beneath-beyond method from \cite{EFKP} works faster than any of our ``tropical'' methods when the output polytope is low dimensional, while our methods seem to have an advantage in higher dimension (5 or more).  However, the method of \cite{EFKP} may have an advantage when there are many specialized points in the input configurations, as the number of cones in the tropical description increases rapidly.  See the last problem in Section~\ref{sec:openProblems} below.

For resultant varieties of codimension higher than one, whether specialized or not, we only know of one method for computing the tropicalization as a fan, without knowing the defining polynomials, which is to traverse the secondary fan of the Cayley configuration or a restriction of it to a subspace.

\section{Open problems}
\label{sec:openProblems}

\begin{description}
\item[Combinatorial classification of resultant polytopes] 
For $1$-dimensional point configurations, the combinatorics of the resultant polytope only depend on the (partial) order of the (not necessarily distinct) points in each $A_i$ \cite{GKZ}, so a combinatorial classification is easy to obtain.  No such classification is known even for point configurations in $\ZZ^2$. A concrete problem is to 
{\bf  classify 4-dimensional resultant polytopes combinatorially.} 
This was done for 3-dimensional resultant polytopes by Sturmfels \cite{Sturmfels94}, and only one-dimensional point configurations were needed for this case. To understand the 4-dimensional resultant polytopes, we need to work with the case $\cA = (A_1, A_2, A_3)$ where each $A_i$ consists of three points in $\ZZ^2$ that are not necessarily distinct.  How can we stratify the space of tuples $\cA$'s according to the combinatorial type of the resultant polytope? 
\item[Using symbolic perturbation]  At the end of Section~\ref{sec:codimension}, we gave a probabilistic algorithm for computing codimension of resultants. Can we turn this into a polynomial time deterministic algorithm using symbolic perturbation? 
\item[Finding a point in the specialized tropical resultant] For non-specialized tropical resultants, the polynomial time algorithm for computing codimension from Section~\ref{sec:codimension} can also be used to find a generic point, by Theorem~\ref{thm:orthants}. Is there a polynomial time algorithm for finding a generic vector $\omega\in\QQ(\varepsilon)^m$ in the specialized tropical resultant?
\item[Improved description of specialized tropical resultants] 
By combining the descriptions of tropical resultants in Theorem~\ref{thm:orthants} and stable intersections in Lemma~\ref{lem:computingstableintersections}, we get a specialized tropical resultant as a union of cones. In computations, we need to go through a list of $\prod_{i=1}^k{m_i \choose 2}$ choices of tuples of pairs from $A_i$, many of which do not contribute to a facet of specialized tropical resultant.
{\bf Give a combinatorial characterization for the choices of the tuples of pairs that contribute to a facet.} Corollary~\ref{cor:impExtreme} and Lemma~\ref{lem:interiorPoints} are results in this direction.
\end{description}

\section*{Acknowledgments}  We thank MSRI (Berkeley, USA) and Institut Mittag-Leffler (Djursholm, Sweden) for their support and hospitality.  The first author was partially supported by the German Research Foundation (Deutsche Forschungsgemeinschaft (DFG)) through the Institutional Strategy of the University of G\"ottingen, partially by the DFG grant MA 4797/3-1 (SPP 1489) and partially by the Danish Council for Independent Research, Natural Sciences (FNU). The second author was partially supported by a Postdoctoral Research Fellowship and DMS grant \#1101289 from the National Science Foundation (USA).  We would like to express our gratitude to the anonymous referees, especially Reviewer \#1, for exceptionally careful reading and detailed comments.

\bibliographystyle{amsalpha}

\bibliography{mybib}

\end{document}